\newcommand{\C}{\mathbb{C}}
\newcommand{\Q}{\mathbb{Q}}
\newcommand{\Z}{\mathbb{Z}}
\newcommand{\op}{\operatorname}
\newcommand{\Fl}{\mathrm{Fl}}
\newcommand{\semi}{\frac{\infty}{2}}
\newcommand{\dom}{P^+}
\newcommand{\tkm}{{T_{KM}}}
\newcommand{\tfin}{T}
\newcommand{\ttilde}{{\tilde T}}
\newcommand{\bt}[1]{{\bf t}_{#1}}
\newcommand{\gt}[1]{t^{#1}}
\newtheorem{theorem}{Theorem}[section] 
\newtheorem{remark}[theorem]{Remark}
\newtheorem{corollary}[theorem]{Corollary}
\newtheorem{proposition}[theorem]{Proposition}
\newtheorem{lemma}[theorem]{Lemma}
\newtheorem{definition}[theorem]{Definition}
\newtheorem{definition/lemma}[theorem]{Definition/Lemma}
\newtheorem{example}[theorem]{Example}
\newtheorem{task}[theorem]{Task}
\title{Affine Demazure weight polytopes and twisted Bruhat orders}
\author{Marc Besson}
\address{BICMR, Peking University, Beijing, P.R. China, 100871}
\email{marc@bicmr.pku.edu.cn}
\author{Sam Jeralds}
\address{University of Sydney, Camperdown, NSW 2006}
\email{samuel.jeralds@sydney.edu.au}
\author{Joshua Kiers}
\email{joshkiers@gmail.com}
\begin{document}
\maketitle
\begin{abstract}
For an untwisted affine Kac--Moody Lie algebra $\mf[g]$ with Cartan and Borel subalgebras $\mf[h] \subset \mf[b] \subset \mf[g]$, affine Demazure modules are certain $U(\mf[b])$-submodules of the irreducible highest-weight representations of $\mf[g]$. We introduce here the associated affine Demazure weight polytopes, given by the convex hull of the $\mf[h]$-weights of such a module. Using methods of geometric invariant theory, we determine inequalities which define these polytopes; these inequalities come in three distinct flavors, specified by the standard, opposite, or semi-infinite Bruhat orders. We also give a combinatorial characterization of the vertices of these polytopes lying on an arbitrary face, utilizing the more general class of twisted Bruhat orders.  

\end{abstract}

\section{Introduction}
\subsection{Motivation} 
Let $G$ be a finite-dimensional, simple, simply-connected complex algebraic group, with a fixed choice of maximal torus and Borel subgroup $T \subset B \subset G$. Let $\lambda$ be a dominant integral weight of $T$, let $V_{\lambda}$ be the associated irreducible, finite-dimensional representation of $G$, and let $w \in  W$ be an element of the Weyl group. We associate to such a $\lambda$ and $w$ the Demazure module $V_\lambda^w$, which is the smallest cyclic $B$-submodule of $V_\lambda$ containing the extremal weight vector $v_{w\lambda}$. These modules and their $T$-characters were first studied by Demazure \cite{Dem1, Dem2}, particularly via their geometric constructions as global sections of line bundles on Schubert varieties $X_w = \overline{BwB/B} \subset G/B$ in the flag variety associated to $G$. 

To each Demazure module $V_\lambda^w$ one can attach the associated \emph{Demazure weight polytope}
\begin{equation}  \label{polydefintro}
P_\lambda^w:= \op{conv}_\Q \{ \mu: V_\lambda^w(\mu) \neq 0\} \subset X^\ast(T) \otimes \Q,
\end{equation}
where $V_\lambda^w(\mu)$ denotes the $\mu$ weight space of $V_\lambda^w$ under the action of $T$, $X^\ast(T)$ is the character lattice of $T$, and $\op{conv}_\Q$ is the rational convex hull. Such weight polytopes, their geometric and combinatorial descriptions, and their connection to the characters of Demazure modules have been of recent interest. When $G$ is of type $A_n$, the role of a Demazure character is played by a key polynomial $\kappa_\alpha$, and the Demazure weight polytope is given by the corresponding Newton polytope $\op{Newton}(\kappa_\alpha)$. A conjecture of Monical--Tokcan--Yong \cite{MTY}, proven by Fink--M{\'e}sz{\'a}ros--St. Dizier \cite{FMSD}, gives a relation between the lattice points in $\op{Newton}(\kappa_\alpha)$ and the monomials appearing in $\kappa_\alpha$; that is, the key polynomial $\kappa_\alpha$ has \textit{saturated Newton polytope} (see \cite{MTY}*{Definition 1.1}). The polytopes $\op{Newton}(\kappa_\alpha)$ can also be viewed as certain examples of Bruhat-interval polytopes, introduced by Tsukerman and Williams \cite{TW}; these more general polytopes--and their faces--have concise descriptions both by their vertices and by their inequalities.

In previous work \cite{BJK}, the authors extended the consideration of Demazure weight polytopes $P_\lambda^w$ beyond type $A_n$. First, using geometric invariant theory (GIT) techniques, inequalities were derived which cut out $P_\lambda^w$ in all types. More specifically, for points $x$ in the Schubert variety $X_w$, a line bundle $\mathbb{L}$, and a cocharacter $\eta$ of $T$, one gets inequalities $\mu^{\mathbb{L}}(x,\eta)\le 0 $ from the Hilbert--Mumford criterion for semistability. The set of inequalities was then reduced to a necessary and sufficient set by translating the geometric data coming from the GIT problem into the combinatorics of the Demazure product or monoid structure $(W, \ast)$ on the Weyl group. These were then used to describe the combinatorial structure on the faces of $P_\lambda^w$, which made apparent that they are again Demazure weight polytopes. Finally, with this in hand, representation-theoretic techniques could be applied to extend the saturation result of \cite{FMSD} to Demazure characters when $G$ is simple of classical type. By additional computational methods, the same result was also obtained in types $F_4$ and $G_2$, and is conjectured to hold for $E_6, E_7 $ and $E_8$ (see \cite{BJK}*{Conjecture 11.4}).

\subsection{Extending to the affine setting} In the present work, we aim to extend a portion of this story to the setting of \emph{affine Demazure modules}.  We do not endeavor to give a complete survey of these modules, but refer to Section \ref{affDem} for a brief introduction to the necessary algebraic and geometric perspectives. Like their ``finite-type" (non-affine) counterparts, the affine Demazure modules are determined by a dominant integral weight $\lambda$ and Weyl group element $w$, but these are now for the \emph{untwisted affine Kac-Moody groups}, and the \emph{affine Weyl group}. 

Via the same definition as in (\ref{polydefintro}), we can define, for an affine Demazure module $V_\lambda^w$, the corresponding affine Demazure weight polytope $P_\lambda^w$. The goal of this paper is two-fold:

\begin{enumerate}
\item Explicitly describe a complete (though redundant) set of inequalities defining an affine Demazure polytope $P_\lambda^w$ (Theorem \ref{first-pass}), and
\item Give a combinatorial description of the vertices of $P_\lambda^w$ which lie on an arbitrary face (Theorem \ref{face-intersection-2}).
\end{enumerate}

For the first objective, we again use GIT techniques, now on Schubert varieties in the affine flag variety $G(\mathcal{K})/I$, where $G$ is as before, $\mathcal{K} = \C((t))$ is the field of Laurent series, and $I$ is the (positive) Iwahori subgroup associated to $B$. While the approach is the same as in the finite case, we encounter new phenomena that are unique to the affine setting, stemming from how an arbitrary cocharacter relates to the Tits cone. This new feature manifests itself as follows: the inequalities defining $P_\lambda^w$ (and hence the faces of $P_\lambda^w$) come in three distinct types, depending on a cocharacter $\eta$ being 
\begin{itemize}
\item[(i)] in the dominant chamber, or 
\item[(ii)] in the anti-dominant chamber, or
\item[(iii)] outside of the Tits cone and dominant on the subspace $\op{Lie}(T)$;
\end{itemize}
in more familiar terms of weights, these three cases correspond to being of positive level, negative level, or level zero. These three types lead to  the associated geometry in $G(\mathcal{K})/I$ being ``thin," ``thick," or ``semi-infinite" in flavor and to the associated combinatorics being with respect to the standard, opposite, or semi-infinite Bruhat orders. While these three orders are considerably different, a crucial shared property is that each satisfies the \emph{diamond lemma} (recalled in Lemma \ref{diamond}). This allows us to give a unified treatment of these orders and define an associated Demazure product for each in Section \ref{Orders and Demazure products}. Note that the inequalities obtained in Theorem \ref{first-pass} are in general not an irredundant set of inequalities. It would be an interesting problem to determine the minimal set of inequalities. 

For the second objective, even the introduction of the three distinct Bruhat orders does not successfully capture the combinatorics of the faces of $P_\lambda^w$. We instead rely on the more general notion of \emph{twisted Bruhat orders}, as introduced by Dyer \cite{Dyer3, Dyer4}, which remember more subtle information regarding the coweight $\eta$ beyond its ``type" as above. As a common generalization of the standard, opposite, and semi-infinite Bruhat orders, twisted Bruhat orders share the same beneficial properties such as the diamond lemma and well-defined Demazure products. We recall the definition of these orders, and discuss the key features which generalize familiar results on the standard Bruhat order, in Section \ref{orderable-horrors}. While an interesting topic, we do not discuss in this paper the associated geometric characterizations of twisted Bruhat orders as we do for the standard, opposite, and semi-infinite cases; we refer interested readers instead to the work of Billig and Dyer \cite{BD}. We do not give any indication here about the saturation of affine Demazure characters, although this remains a question of interest. 

\subsection{Outline of paper}
In Section \ref{affDem} we recall the definitions of the affine (Kac--Moody) Lie algebras and groups, their irreducible highest weight modules, and the Demazure modules $V_\lambda^w$. We also give a sketch of the geometric construction of the Demazure modules via the Borel--Weil--Bott theorem and line bundles on the affine flag variety and Schubert varieties. 

In Section \ref{Orders and Demazure products} we discuss the standard, opposite, and semi-infinite Bruhat orders. We give a uniform combinatorial treatment of these orders and their associated length functions, and in particular use the diamond lemma to define a Demazure product for each of these orders. We show that these products are well-defined and derive the combinatorial properties of this product that we will need. 

In Section \ref{geom-orders}, we recast the three Bruhat orders into geometric statements via closure relations in the affine flag variety. For the standard and opposite Bruhat orders these are well-known; for the semi-infinite order, this is also well-known, although the existing literature suffers from multiple common conventions. We fix our conventions with some explicit explanation in this setting. 

In Section \ref{GITsection}, we briefly recall how to use GIT techniques to produce controlling inequalities for semistability. We then apply this to line bundles on Schubert varieties in Section \ref{ineqdetermined} to determine the inequalities of $P_\lambda^w$. 

In Section \ref{Stabilizer section}, we examine the stabilizer of a coweight $\eta$ as a subgroup in the affine Weyl group. We introduce a uniform notation for distinguished subgroups $W(\eta)$ related to the stabilizer and their coset representatives $W^{(\eta)}$. In Section \ref{facesandtwisteds} we examine the faces of the polytopes $P_\lambda^w$ to motivate in Section \ref{orderable-horrors} the introduction of the twisted Bruhat orders of Dyer. We derive the key combinatorial properties of these orders and describe how they relate to the cosets $W^{(\eta)}$ and subgroups $W(\eta)$ of the previous sections. Finally, in Section \ref{hummingbirds} we use this machinery to determine which vertices of $P_\lambda^w$ lie on a fixed face, and show that this set is itself determined by an interval in the Coxeter group $W(\eta)$ beginning at the identity.

\subsection*{Acknowledgements} The authors would like to thank Jiuzu Hong for encouraging the pursuit of this question in the affine Lie algebras setting, and for many helpful discussions on the geometry of affine flag varieties.

\section{Preliminaries on affine Lie algebras and affine flag varieties} \label{affDem}
In this section, we briefly sketch the construction of affine (Kac--Moody) Lie algebras, their irreducible highest-weight representations, and the associated affine flag varieties.  Because of the scope of work in these areas (in particular with relation to geometry), many differing but equivalent choices of conventions and realizations exist in the literature. We fix here our notations and conventions, following most closely those of \cite{Kac} and \cite{KumarBook}, with the translation to other existing conventions being a straightforward exercise for the knowledgeable reader.

\subsection{Affine Lie algebras}\label{affine-lie-algebras}

Let $\mathring{\mf[g]}$ be a finite-dimensional simple Lie algebra over $\C$. We fix a choice of Cartan and Borel subalgebras $\mathring{\mf[h]}$ and $\mathring{\mf[b]}$ with $\mathring{\mf[h]} \subset \mathring{\mf[b]} \subset \mathring{\mf[g]}$. With respect to this choice, we have a root system $\mathring{\Phi}$ with simple roots $\{\alpha_1, \dots, \alpha_n\}$. We also set $\theta \in \mathring{\Phi}$ to be the highest root of $\mathring{\mf[g]}$. Then there is a normalized invariant form $( \cdot | \cdot )$ on $\mathring{\mf[g]}$ satisfying $(\theta | \theta)=2$; we will always use this normalization. The associated root lattice of $\mathring{\mf[g]}$ is given by $\mathring{Q}:= \bigoplus_{i=1}^n \Z \alpha_i \subset \mathring{\mf[h]}^\ast$, where we identify $\mathring{\mf[h]}^\ast$ with $\mathring{\mf[h]}$ via $(\cdot | \cdot)$. We have similarly the coroot lattice $\mathring{Q}^\vee = \bigoplus_{i=1}^n \Z \alpha_i^\vee \subset \mathring{\mf[h]}$. The Weyl group of $\mathring{\mf[g]}$, denoted by $\mathring{W}$, is generated as a Coxeter group by the simple reflections $\{ s_1, \dots, s_n \}$, where $s_i$ is the reflection corresponding to the simple root $\alpha_i$. 

To this data, we let $\mf[g]$ be the untwisted affine Kac--Moody Lie algebra associated with $\mathring{\mf[g]}$; we refer to these throughout simply as \emph{affine Lie algebras}. As a vector space, $\mf[g]$ is given by 
$$
\mf[g] = \left(\mathring{\mf[g]} \otimes_\C \C[t, t^{-1}]\right) \oplus \C d \oplus \C K,
$$
where $\C[t,t^{-1}]$ is the ring of Laurent polynomials, $d$ is the degree derivation, and $K$ is the central element. Then $\mathring{\mf[g]}$ is naturally a subalgebra of $\mf[g]$ via the embedding $x \mapsto x \otimes 1$. We let 
$$
\mf[h] = \mathring{\mf[h]} \oplus \left(\C d + \C K\right)
$$ 
be the Cartan subalgebra of $\mf[g]$. We extend the bilinear form $( \cdot | \cdot )$ to $\mf[h]$ by $(\mathring{\mf[h]} | \C d \oplus \C K)=(d | d) = (K | K) =0$ and $(d | K)=1$. Similarly, we can define
$$
\mf[h]^\ast = \mathring{\mf[h]}^\ast \oplus \left(\C \Lambda_0 + \C \delta\right),
$$
where $\Lambda_0$ is dual to $K$ and $\delta$ is dual to $d$ (that is, $\langle \Lambda_0, K\rangle =\langle \delta, d\rangle=1$ and $\langle\Lambda_0,\mathring{\mf[h]} \oplus \C d\rangle = \langle \delta, \mathring{\mf[h]} \oplus \C K\rangle=0$). 

As a Kac--Moody algebra, $\mf[g]$ has a root system $\Phi$ which decomposes as a disjoint union $\Phi_{re} \sqcup \Phi_{im}$ of real and imaginary roots, respectively. These are given by 
$$
\begin{aligned}
\Phi_{re}&=\{\beta +n\delta: \beta \in \mathring{\Phi}, \ n \in \Z\}, \\
\Phi_{im}&=\{n \delta: \ n \in \Z, n \neq 0 \}.
\end{aligned}
$$
The simple roots of $\mf[g]$ are given by $\{\alpha_0, \alpha_1, \dots, \alpha_n\}$, where as before $\{\alpha_1, \dots, \alpha_n\}$ are the simple roots in $\mathring{\Phi}$ and $\alpha_0:=-\theta+\delta$. Then the \emph{affine Weyl group} $W$ is generated as a Coxeter group by the simple reflections $\{s_0, s_1, \dots, s_n \}$. The affine Weyl group $W$ can also be realized as an affine reflection group associated to the finite Weyl group $\mathring{W}$ via 
$$
W \cong \mathring{W} \ltimes \mathring{Q}^\vee,
$$
where to an element $\beta^\vee \in \mathring{Q}^\vee$ of the finite coroot lattice we associate the translation $\bt{\beta^\vee}$. In this presentation, we have $s_0=s_\theta \bt{-\theta^\vee}$.

\subsection{Demazure modules for affine Lie algebras}

Given an affine Lie algebra $\mf[g]$, recall the set of integral dominant weights for $\mf[g]$ (or more appropriately, for $\mf[h] \subset \mf[g]$) defined by 
$$
\dom := \{\lambda \in \mf[h]^\ast: \ \langle\lambda, \alpha_i^\vee\rangle \in \Z_{\geq 0} \ \forall i = 0, 1, \dots, n\}.
$$
We denote by $\Lambda_i \in \dom$ the $i^{th}$ fundamental weight of $\mf[g]$, defined uniquely by $\langle\Lambda_i ,\alpha_j^\vee\rangle=\delta_{ij}$, the Kronecker delta, and $\langle\Lambda_i,d\rangle=0$ for all $i=0,1,\dots,n$. Note that the fundamental weight $\Lambda_0$ agrees with the previous definition. We define the weight lattice of $\mf[g]$, $P$, via 
$$
P:= \bigoplus_{i=0}^n \Z \Lambda_i \oplus \C \delta.
$$

To each integral dominant weight $\lambda \in \dom$, we associate the corresponding integrable irreducible highest-weight $\lambda$ representation of $\mf[g]$, $V_\lambda$. This gives a bijection between $\dom$ and the collection of such representations. For the purposes of this paper, we will always assume $\langle \lambda,K\rangle >0$, as the case $\lambda = k\delta$ is uninteresting ($\dim V_{k\delta}=1$). As an $\mf[h]$-module via restriction, $V_\lambda$ has a weight space decomposition 
$$
V_\lambda=\bigoplus_{\mu \in P} V_\lambda(\mu),
$$
where $V_\lambda(\mu):=\{v \in V_\lambda: \ h.v=\langle \mu,h\rangle v \ \forall h \in \mf[h]\}$; note that each weight space is finite dimensional. The set of weights of $V_\lambda$ is given by $\{\mu \in P: V_\lambda(\mu) \neq 0\}$. This is a (typically infinite) $W$-invariant set of weights in $P$. If $\mu=w\lambda$ is a weight of $V_\lambda$ for some Weyl group element $w$, we say that $\mu$ is an extremal weight of $V_\lambda$. 

As $V_\lambda$ is a highest-weight representation, there exists a unique-up-to-scaling nonzero vector $v_\lambda \in V_\lambda(\lambda)$, the highest-weight vector. For any Weyl group element $w \in W$, consider the extremal weight space $V_\lambda(w\lambda)$. This is one-dimensional, with basis element $v_{w\lambda}$. We now introduce the key representation-theoretic object of interest; namely, the Demazure module $V_\lambda^w$. 

\begin{definition}
Let $V_\lambda$ be an irreducible, highest-weight $\mf[g]$-representation, and fix $w \in W$. Then the \emph{Demazure module} $V_\lambda^w$ is the $U(\mf[b])$-submodule of $V_\lambda$ given by 
$$
V_\lambda^w:=U(\mf[b]).v_{w\lambda} \subset V_\lambda.
$$
\end{definition}
\noindent In the next subsection, we give a geometric construction for $V_\lambda^w$. 

\begin{remark}
Many authors consider, for affine Lie algebras, Demazure modules $V_\lambda^w$ for $w$ an element of the \emph{extended} affine Weyl group. These modules can equivalently be described by an element of the usual affine Weyl group along with a suitable adjustment to the highest weight, so we do not make a distinction in these cases. 
\end{remark}

We can again consider the $\mf[h]$-module structure via restriction for $V_\lambda^w$; this gives a similar weight space decomposition 
$$
V_\lambda^w = \bigoplus_{\mu \in P} V_\lambda^w(\mu).
$$
A basic question about such representations is to understand the set of nonzero weights of $V_\lambda^w$, denoted $\op{wt}(V_\lambda^w)$. This set is calculable via the Demazure character formula through the work of Kumar \cite{Kumar2} and Mathieu \cite{Mat}. In this work we consider the following a priori weaker object, the \emph{Demazure weight polytope} $P_\lambda^w$. These are finite-dimensional, compact, convex, lattice polytopes and will be the fundamental objects of interest for us.

\begin{definition}
Let $V_\lambda^w$ be a Demazure module associated to an affine Lie algebra $\mf[g]$. Then the Demazure weight polytope $P_\lambda^w$ is defined by 
$$
P_\lambda^w:=\op{conv}(\op{wt}(V_\lambda^w)) \subset \mf[h]^\ast_{\Q}.
$$
\end{definition}
While not clear from the definition, the Demazure weight polytope $P_\lambda^w$ has a simple description via its vertices. We take the following result from \cite{BJK}*{Theorem 4.7}, as the proof given there goes through to the affine Lie algebra setting; in the finite case, this result is implicit in the work of Dabrowski \cite{Dab}*{page 119} via a similar proof. 

\begin{proposition} \label{convhull}
For any $\lambda \in \dom$ and $w \in W$, we have 
$$
P_\lambda^w = \op{conv}(\{ v \lambda: v \leq w \})
$$
where $v \leq w$ is in the standard Bruhat order on $W$. 
\end{proposition}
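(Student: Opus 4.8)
The plan is to prove the equality $P_\lambda^w = \op{conv}(\{v\lambda : v\le w\})$ by induction on the length $\ell(w)$, maintaining the full equality at every step rather than proving the two inclusions separately. Write $Q_w := \op{conv}(\{v\lambda : v\le w\})$ for the candidate right-hand side. The base case $w=e$ is immediate: $\mf[n]^+$ annihilates $v_\lambda$, so $V_\lambda^e=\C v_\lambda$ and $P_\lambda^e=\{\lambda\}=Q_e$. For the inductive step I pick a simple reflection $s_i$ with $s_iw<w$ and set $w'=s_iw$, so that $w=s_iw'$ with $s_iw'>w'$ and $\ell(w')<\ell(w)$; I may then assume $P_\lambda^{w'}=Q_{w'}$, and I note for later use that $V_\lambda^{w'}\subseteq V_\lambda^{s_iw'}$, hence $P_\lambda^{w'}\subseteq P_\lambda^{s_iw'}$.

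The heart of the argument is to produce two matching ``reflect-and-convexify'' recursions, one on each side. On the representation-theoretic side I use the standard Demazure recursion for Kac--Moody algebras: since $s_iw'>w'$, one has $V_\lambda^{s_iw'}=U(\mf[g]_i)\,V_\lambda^{w'}$, where $\mf[g]_i\cong\mf[sl]_2$ is the root $\mf[sl]_2$ attached to the real simple root $\alpha_i$. Because $V_\lambda^{w'}$ is already stable under $e_i$ (it is a $\mf[b]$-module), raising operators contribute nothing new and $V_\lambda^{s_iw'}=\sum_{a\ge0}f_i^a\,V_\lambda^{w'}$. From this I want to extract the polytope identity
$$
P_\lambda^{s_iw'}=\op{conv}\!\left(P_\lambda^{w'}\cup s_i P_\lambda^{w'}\right).\qquad(\star)
$$
One inclusion is formal: $V_\lambda^{s_iw'}$ is $\mf[g]_i$-stable, so its weight set is a union of complete $\alpha_i$-strings and is therefore $s_i$-invariant; since it contains $\op{wt}(V_\lambda^{w'})$, it also contains $s_i\op{wt}(V_\lambda^{w'})$, giving $\supseteq$. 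For the reverse inclusion I take any weight $\sigma$ of $V_\lambda^{s_iw'}$ and, using $V_\lambda^{s_iw'}=\sum_{a\ge0}f_i^aV_\lambda^{w'}$, locate a weight $\mu=\sigma+a\alpha_i$ with $a\ge0$ lying in $\op{wt}(V_\lambda^{w'})$ and on the same $\alpha_i$-string as $\sigma$; after using the $s_i$-symmetry to reduce to $\langle\sigma,\alpha_i^\vee\rangle\ge0$, a short computation shows $\sigma$ lies on the segment $[\mu,s_i\mu]$, hence in $\op{conv}(P_\lambda^{w'}\cup s_iP_\lambda^{w'})$. This string bookkeeping is where I expect the real work to be: the weight spaces of $V_\lambda^{w'}$ and $V_\lambda^{s_iw'}$ do not match up naively weight-by-weight (a string top of $V_\lambda^{s_iw'}$ need not be a weight of $V_\lambda^{w'}$), so one must argue through the $f_i$-lowering description rather than by a direct comparison of supports; here I rely on the finite-dimensionality of affine Demazure modules to keep all $\alpha_i$-strings finite.

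On the combinatorial side I prove the matching Bruhat-order identity
$$
\{v: v\le s_iw'\}=\{v: v\le w'\}\cup\{s_iv: v\le w'\},
$$
valid precisely because $s_iw'>w'$. This follows cleanly from the subword property: fixing a reduced word $\mathbf{w'}$ for $w'$, a reduced word for $s_iw'$ is $s_i\mathbf{w'}$, and an element $u\le s_iw'$ is the product of a subword either avoiding the initial letter (whence $u\le w'$) or using it (whence $u=s_iv'$ with $v'\le w'$, i.e.\ $u\in s_i\{v\le w'\}$); conversely both $\{v\le w'\}$ and $s_i\{v\le w'\}$ sit below $s_iw'$ by the same subword reasoning. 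Applying $\lambda$ and using $s_i(v\lambda)=(s_iv)\lambda$ together with $\op{conv}(s_iX)=s_i\op{conv}(X)$ turns this into $Q_{s_iw'}=\op{conv}(Q_{w'}\cup s_iQ_{w'})$. Combining the two recursions with the inductive hypothesis $P_\lambda^{w'}=Q_{w'}$ gives
$$
P_\lambda^{s_iw'}=\op{conv}(P_\lambda^{w'}\cup s_iP_\lambda^{w'})=\op{conv}(Q_{w'}\cup s_iQ_{w'})=Q_{s_iw'},
$$
completing the induction. The main obstacle, as noted, is the careful verification of $(\star)$; everything else is either formal or a standard Coxeter-combinatorial computation.
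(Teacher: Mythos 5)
Your proof is correct and follows essentially the same route as the paper's: the paper does not reprove this statement but defers to \cite{BJK}*{Theorem 4.7} (cf.\ also Dabrowski), whose argument is precisely this induction on $\ell(w)$ combining the Kac--Moody Demazure recursion $V_\lambda^{s_iw'}=U(\mathfrak{g}_i)V_\lambda^{w'}=\sum_{a\ge0}f_i^aV_\lambda^{w'}$ with the $\alpha_i$-string convexity identity $(\star)$ and the subword decomposition of $[e,s_iw']$. The one genuinely delicate point, the reduction to $\langle\sigma,\alpha_i^\vee\rangle\ge0$, is handled correctly, since it uses the $s_i$-stability both of $\op{wt}(V_\lambda^{s_iw'})$ and of $\op{conv}\left(P_\lambda^{w'}\cup s_iP_\lambda^{w'}\right)$, after which $0\le a\le\langle\mu,\alpha_i^\vee\rangle$ follows from $\langle\mu,\alpha_i^\vee\rangle=\langle\sigma,\alpha_i^\vee\rangle+2a$.
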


\subsection{Affine flag varieties} 

Our ultimate goal is to understand the inequalities defining the polytopes $P_\lambda^w$, using techniques from geometric invariant theory (GIT) in 
Section \ref{ineqdetermined}. Toward that end, we first need to understand the connection between Demazure modules and the geometry of Schubert varieties in the affine flag variety. 

To simplify the notation for the remainder of the paper, let $G$ be the finite-dimensional, simple, simply-connected complex algebraic group with $\op{Lie}(G) \cong \mathring{\mf[g]}$.  Associated to $G$ we have group schemes $LG$ and $L^+G$ such that $LG(R)=G(R((t)))$ and $L^+G(R)=G(R[[t]])$ for any $\mathbb{C}$-algebra $R$. We write also $L^-G(R)=G(R[t^{-1}])$. $L^+G$ and $L^{-}G$ are equipped with evaluation morphisms $ev_0: L^+G \rightarrow G$ and $ev_{-\infty} : L^-G \rightarrow G$ sending the uniformizers $t$ and $t^{-}$ to $0$, respectively. We define $I:= ev_0^{-1}(B)$ and $I^{-} := ev_{\infty}^{-1}(B^-)$. Note that the root spaces appearing in $I$ and $I^{-1}$ partition the affine roots associated to $LG$. 

We may form the affine flag variety as the \'etale sheafification of  \[R \mapsto LG(R)/I(R).\] This is an ind-projective ind-scheme (for $G$ as considered). As we will just concern ourselves with the $\mathbb{C}$-points, we have that $Fl_G = G(\mathcal{K})/I$. Our assumptions on $G$ (simple, simply-connected) imply that $Fl_G$ is connected: indeed for $G$ reductive, we have $\pi_0(Fl_G)=\pi_1(G)=P^{\vee}/Q^{\vee}$.

Using the 
Bruhat decomposition of $LG$, given by 
$$
\begin{aligned}
 LG &= \bigcup_{w \in W} IwI,
 \end{aligned}
 $$
 we obtain a stratification of $Fl_G$ by finite-dimensional strata. We write $C(w)$ for $IwI/I$ and $\mathbf{X}(w)$ for its closure, $\overline{IwI/I}$. 
 In general these are singular varieties which are, however, normal and Cohen-Macaulay and admit rational resolutions of singularities. These Schubert varieties $\mathbf{X}(w)$, as well as their cousins in other flag varieties (whose definition will be recalled later), are the main geometric objects of this paper.

\subsection{Line bundles}

Let $\hat{G}$ be the affine Kac-Moody group associated to $G$, a simple group over $\mathbb{C}$. It is built via two successive extensions:

\[1 \rightarrow \mathbb{G}_m\langle K \rangle \rightarrow \tilde{G} \rightarrow LG \rightarrow 1\] and \[1 \rightarrow \mathbb{G}_m\langle d \rangle \rightarrow \hat{G} \rightarrow \tilde{G} \rightarrow 1.\] The extensions split over $L^+G$. As is well-known, $Fl_G$ is a homogeneous space under $LG$, but the line bundles on $Fl_G$ cannot be given an $LG$-equivariant structure. This is remedied by the fact that line bundles on $Fl_G$ may be given a $\hat{G}$-linearization. These linearizations may be restricted to $\tkm$, the Kac--Moody torus, which fits into the following diagrams:

\begin{center}
\begin{tikzcd}
\ttilde \arrow[r] \arrow[d] & \tfin \arrow[d] \\ \tilde{G} \arrow[r] & LG \\
\end{tikzcd}
\end{center}

\begin{center}
	\begin{tikzcd}
	\tkm \arrow[r] \arrow[d] & \ttilde \arrow[d] \\ \hat{G} \arrow[r] & \tilde{G} \\
	\end{tikzcd}
\end{center}

Note that $\tkm$ is an algebraic torus, finite-dimensional over $\mathbb{C}$, of dimension $rk(G)+2$. As mentioned before, since the extensions split over $L^+G$, $\tkm$ is a split extension $\tfin \times \mathbb{G}_m\langle K \rangle \times \mathbb{G}_m \langle d \rangle$.

For any integral $\lambda \in P$ we may construct an equivariant line bundle $L(\lambda)$ on $\Fl_G$: in particular in our setting we have \[Pic(\Fl_G) \simeq P.\] We follow the normalization that for $\lambda \in P^+$, $L(\lambda)$ will have global sections; or in other words the duals $L(-\lambda)= L(\lambda)^*$ can be identified with the pullback of a tautological bundle $L(-\lambda)= i^*\mathscr{O}_{\mathbb{P}(V_{\lambda})}(-1)$ associated to a Plucker embedding $i:\Fl_G \rightarrow \mathbb{P}(V_{\lambda})$ (cf. \cite{KumarBook}*{7.2.1}).

These line bundles may be restricted to line bundles on $\mathbf{X}(w)$, 
and they will be denoted by $L_{w}(\lambda)$. 

\begin{remark}
	Though not strictly necessary for this paper, the following 
theorem of Mathieu \cite{Mat}*{Chapitre XII Prop. 6} is relevant: all line bundles on $\mathbf{X}(w)$ arise in this fashion as restrictions of line bundles from $\Fl_G$. More precisely, given $w \in W$ and writing $Supp(w)$ for its support, we can define \[P^{\circ}_w= \{\lambda \in P| \langle \lambda, \alpha_i^{\vee} \rangle= 0~~ \forall i \in Supp(w)\}.\] Then $Pic(\mathbf{X}(w)) \simeq P/P^{\circ}_w$.
\end{remark}

The link between the algebraic definition of Demazure modules $V_{\lambda}^w$ comes from the following well-known result \cite{KumarBook}:

\begin{proposition}\label{affine-borel-weil} For $\lambda \in P^+$,
	$V_{\lambda}^w \cong H^0(\mathbf{X}(w), L_w(\lambda))^*$, and all higher cohomology groups vanish.
\end{proposition}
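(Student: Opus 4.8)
The plan is to reduce the statement to the Borel--Weil theorem on the ambient ind-scheme $\Fl_G$ together with a cohomology-vanishing computation on the Schubert variety $\mathbf{X}(w)$. First I would record the Borel--Weil statement for $\Fl_G$ itself: for $\lambda \in P^+$, the space $H^0(\Fl_G, L(\lambda))$ is the (restricted) dual of the integrable module $V_\lambda$, and its higher cohomology vanishes. This is the ind-scheme analogue of the classical statement and may be taken from \cite{KumarBook}; it endows $H^0(\Fl_G, L(\lambda))^*$ with a $\hat{G}$- (and in particular $U(\mf[b])$-) module structure identifying it with $V_\lambda$, where the fiber over the $T$-fixed point $vI/I$ contributes the extremal weight line $V_\lambda(v\lambda)$ (up to the sign conventions fixed in the line-bundle normalization above).

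The heart of the argument is the vanishing of $H^i(\mathbf{X}(w), L_w(\lambda))$ for $i>0$. I would obtain this via a Bott--Samelson resolution $\pi \colon Z_w \to \mathbf{X}(w)$ attached to a reduced word for $w$. Since $Z_w$ is an iterated $\mathbb{P}^1$-bundle, one computes the cohomology of $\pi^* L_w(\lambda)$ by induction on $\ell(w)$ using the projection formula on each $\mathbb{P}^1$-factor; this simultaneously yields the vanishing of higher cohomology on $Z_w$ and identifies the $T$-character of $H^0(Z_w, \pi^* L_w(\lambda))$ with the Demazure character. Because $\mathbf{X}(w)$ is normal and Cohen--Macaulay and admits a rational resolution (as recalled above), $\pi$ satisfies $R\pi_* \mathcal{O}_{Z_w} = \mathcal{O}_{\mathbf{X}(w)}$; the projection formula then transports the computation downward, giving $H^i(\mathbf{X}(w), L_w(\lambda)) \cong H^i(Z_w, \pi^* L_w(\lambda))$ and hence the claimed vanishing, together with the character of $H^0(\mathbf{X}(w), L_w(\lambda))$.

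With vanishing in hand, I would establish surjectivity of restriction from the ambient space. Working inside a Schubert variety $\mathbf{X}(v)$ with $v \geq w$, the short exact sequence $0 \to \mathcal{I} \otimes L_v(\lambda) \to L_v(\lambda) \to L_w(\lambda) \to 0$, together with the analogous vanishing for the ideal sheaf $\mathcal{I}$ of $\mathbf{X}(w)$ in $\mathbf{X}(v)$, yields a surjection $H^0(\mathbf{X}(v), L_v(\lambda)) \twoheadrightarrow H^0(\mathbf{X}(w), L_w(\lambda))$; passing to the colimit over $v$ gives $H^0(\Fl_G, L(\lambda)) \twoheadrightarrow H^0(\mathbf{X}(w), L_w(\lambda))$. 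Dualizing this surjection produces a $U(\mf[b])$-equivariant injection $H^0(\mathbf{X}(w), L_w(\lambda))^* \hookrightarrow V_\lambda$ whose image is a $U(\mf[b])$-submodule. Finally I would pin down this image: evaluation at the $T$-fixed point $wI/I$ shows that the extremal vector $v_{w\lambda}$ lies in it, so $V_\lambda^w = U(\mf[b]).v_{w\lambda}$ is contained in the image, while the reverse inclusion follows by comparing $T$-characters, since the character computed above via Bott--Samelson is precisely the Demazure character of $V_\lambda^w$.

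The main obstacle is the cohomology vanishing; everything else is bookkeeping with the module structure and, in particular, tracking which extremal weight sits over each $T$-fixed point so that the positive/negative-level sign choices are handled consistently. I would also take care that, although $\Fl_G$ is only an ind-scheme and $V_\lambda$ is infinite-dimensional, each $\mathbf{X}(w)$ is a genuine finite-dimensional projective variety and $V_\lambda^w$ is finite-dimensional, so all sheaf-cohomology and duality statements take place at finite level. An alternative route to the vanishing is through Frobenius splitting of affine Schubert varieties followed by semicontinuity back to characteristic zero; the Bott--Samelson/rational-resolution argument sketched above avoids positive characteristic entirely and is the version I would write out in detail.
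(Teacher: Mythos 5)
The paper does not prove this proposition: it is quoted as a ``well-known result'' with a citation to \cite{KumarBook}, so there is no internal argument to compare yours against. Your outline is, in essence, the standard proof from that reference --- higher cohomology vanishing via a Bott--Samelson(--Demazure--Hansen) resolution combined with normality and rational resolutions of $\mathbf{X}(w)$, surjectivity of restriction from the ambient flag variety, and identification of the dual of $H^0$ with the cyclic $U(\mathfrak{b})$-submodule generated by the extremal vector --- and it is a sound plan. Two places deserve more care. First, the vanishing of $H^{\geq 1}\bigl(\mathbf{X}(v), \mathcal{I}\otimes L_v(\lambda)\bigr)$ for the ideal sheaf of $\mathbf{X}(w)$ in $\mathbf{X}(v)$ is not a formal consequence of the vanishing on the Schubert varieties themselves; it is a separate (standard but nontrivial) induction that must be run in parallel with the one you describe, so calling it ``analogous'' hides real work. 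Second, and more substantively, your closing step identifies the image of $H^0(\mathbf{X}(w),L_w(\lambda))^*\hookrightarrow V_\lambda$ with $V_\lambda^w$ by comparing $T$-characters; but the equality of $\operatorname{char}V_\lambda^w$ with the character produced by the Bott--Samelson computation is precisely the Demazure character formula, whose known proofs in the Kac--Moody setting (Kumar, Mathieu) are themselves obtained by first establishing the present proposition. As written, this step is circular unless you import the character formula as an independent black box. The standard way to close the loop without it: the image of the dual injection is the annihilator of the sections vanishing on $\mathbf{X}(w)$, and a global section pairing to zero against all of $U(\mathfrak{b})\cdot v_{w\lambda}$ must vanish on the orbit $IwI/I$, which is dense in $\mathbf{X}(w)$; hence it vanishes on $\mathbf{X}(w)$, giving the reverse inclusion directly with no character input.
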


\noindent This realization of the Demazure module $V_\lambda^w$, and the fact that the Schubert variety ${\bf X}(w)$ is $\tkm$-stable, situates us nicely in the realm of GIT techniques, as we will see in Section \ref{GITsection}.

\section{Orders and Demazure products on the affine Weyl group} \label{Orders and Demazure products}

In this section, we build the combinatorial framework which will support and reinterpret the subsequent geometric properties we are interested in. In particular, we closely examine an affine Weyl group $W$ and various ``Bruhat" partial orders on it; to each of these, we then introduce an associated ``Demazure product," defined purely combinatorially, and investigate their shared properties. While the individual orders and Demazure products greatly vary, a combinatorial lifting lemma (Lemma \ref{diamond}) allows us to treat these products in a uniform fashion. 

The orders considered in this section and their associated Demazure products are all common examples of a more general construction for \emph{twisted Bruhat orders}, which we will recall in Section \ref{orderable-horrors}. However, we separate and highlight in this section the most familiar of these orders--namely, the standard, opposite, and semi-infinite Bruhat orders--as a first introduction to this approach, as these orders will play the most significant roles in our geometrical analysis in subsequent sections.

\subsection{Three Bruhat orders on affine Weyl groups}

Let $W$ be an affine Weyl group, with simple generators $S=\{s_0, s_1, \dots, s_n\}$ as in Section \ref{affine-lie-algebras}. As $(W,S)$ is a Coxeter group, we have an associated length function $l: W \to \Z_{\geq 0}$ and Bruhat order $\leq$ on $W$; we will refer to this as the \textit{standard} or \textit{positive} Bruhat order on $W$. As the Bruhat order is a partial order on W graded by the length function $l$, we will often consider this data as a pair $(l, \leq)$. We assume familiarity with the definition and standard properties of the Bruhat order, and refer to \cite{BjBr} as our primary source for the full details and constructions.

Second, to any Coxeter group $(W,S)$, we can associate the \textit{opposite} or \textit{negative} Bruhat order as a slight alteration of the standard Bruhat order. 

\begin{definition} The opposite, or negative, Bruhat order $\leq_-$ on $(W,S)$ is defined by 
$$
w \leq_- v \iff v \leq w;
$$
that is, $\leq_-$ is the reverse order of $\leq$ on $W$. Then $\leq_-$ is a graded partial order on $W$ graded by the opposite, or negative, length function 
$$
l_-: W \to \Z_{\leq 0}, \ l_-(w):=-l(w).
$$
\end{definition}

As $(l_-, \leq_-)$ is just the reverse order of the standard Bruhat order with the negative standard length function, many of the combinatorial properties of $(l_-, \leq_-)$ follow immediately as rephrasings from those of $(l, \leq)$. Thus, proofs of properties for $(l_-, \leq_-)$ typically reduce to the case of $(l, \leq)$ in the natural way. 

While the standard and opposite Bruhat orders make sense for any Coxeter group, we now recall a third order on $W$, the \textit{semi-infinite Bruhat order}, specific to the affine Weyl group setting. This relies on the semi-direct product construction of $W$. In particular, recall that we have the realization 
$$
W \cong \mathring{W} \ltimes \mathring{Q}^\vee,
$$
where $\mathring{W}:=\langle s_1, \dots, s_n \rangle$ is the finite Weyl group, and $\mathring{Q}^\vee$ is the finite coroot lattice acting via translations $\bt \xi$ for $\xi \in \mathring{Q}^\vee$. As before, we have in this realization
$$
s_0 = s_\theta \bt{-\theta^\vee},
$$
where $\theta$ is the highest root of the associated finite-type root system $\mathring{\Phi}$. Let $\rho$ be the half sum of the positive roots of $\mathring{\Phi}$, the (finite) Weyl vector.

\begin{definition}
The semi-infinite length function $l_\semi$ is defined by 
$$
l_\semi: W \to \Z, \ l_\semi(w\bt \xi):= l(w) + \langle 2\rho, \xi \rangle,
$$
where $l(w)$ is the standard length of $w \in \mathring{W}$.
\end{definition}

Using this length function, we can define a partial order on $W$ in a fashion analogous to $(l, \leq)$ or $(l_-, \leq_-)$ as follows. Let $\beta \in \Phi^+_{re}$ be a positive real affine root and $w \in W$. We denote by 
$$
w \xrightarrow {\beta} s_\beta w
$$
if $l_\semi(s_\beta w)=l_\semi(w)+1$, where $s_\beta \in W$ is the reflection associated to $\beta$. We can then define the semi-infinite Bruhat order. 

\begin{definition} For $w, v \in W$, we say that $w \leq_\semi v$ if there exists a (possibly empty) sequence of positive real affine roots $\beta_1, \dots, \beta_k$ such that 
$$
w \xrightarrow{\beta_1} s_{\beta_1}w \xrightarrow{\beta_2} s_{\beta_2}s_{\beta_1}w \xrightarrow{\beta_3} \cdots \xrightarrow{\beta_k}s_{\beta_k}\cdots s_{\beta_1}w=v.
$$
Then $\leq_\semi$ is a partial order on $W$ graded by $l_\semi$.
\end{definition}

\begin{remark}
Note that $l_\semi(v)$ and $l_\semi(v^{-1})$ are not necessarily the same. In fact, we have chosen a ``left-sided'' convention for the covering relations: $w \xrightarrow{\beta} s_\beta w$. The ``right-sided'' semiinfinite Bruhat order is equivalent but less suitable for the present context. 
\end{remark}

The semi-infinite Bruhat order $(l_\semi, \leq_\semi)$ appears in the literature in relation to Lusztig's \textit{generic order} \cite{Lus} and Peterson's \textit{stable order} \cite{Pet}. Like the more familiar Bruhat orders, the semi-infinite order has found extensive application in the representation theory of affine Lie algebras and the geometry of flag varieties; see for example \cite{Ish}*{Introduction} and citations therein. The semi-infinite Bruhat order has a rich combinatorial structure, of which we introduce only a portion for our purposes.  In Section \ref{geom-orders}, we will recall the geometric description of these orders in the affine flag variety.

\begin{example} \label{intervalex} 

In type $A_2^{(1)}$, here is a portion of the Hasse diagram for the standard order local to $s_2s_1s_0 = s_1 {\bf t}_{-\alpha_1^\vee-\alpha_2^\vee}$. This is not a full interval, but merely a sampling of elements and how they are related. The grading along the top indicates the standard length, on the bottom is the negative length. The covering relations for the opposite order would have all arrows exactly reversed. 

\begin{center}
\begin{tikzcd}
& 0 & 1 & 2 &     3        & 4 & & \\ 
& & s_1 \arrow[r] \arrow[dr]  & s_2s_1 \arrow[dr]  &             & s_1s_2s_1s_0 & & \\ 
 & e  \arrow[r] \arrow[ur]& s_0 \arrow[r] \arrow[dr]  & s_1s_0 \arrow[r] & s_2s_1s_0 \arrow[r]  \arrow[ur] \arrow[dr] & s_2s_1s_0s_1 & & \\ 
 & & & s_2s_0 \arrow[ur] &             & s_0s_2s_1s_0 & &  \\ 
 & 0 & -1 &-2  &      -3       & -4 & & 
\end{tikzcd}
\end{center}

Now here is a portion of the Hasse diagram for the semi-infinite order, same elements (again, not a full interval): 

\begin{center}
\begin{tikzcd}
& -4 &    -3       & -2 & -1 & 0 & 1 & 2  \\ 
 & s_1s_2s_1s_0 \arrow[dr] &           & s_2s_1s_0s_1 & & & &  \\ 
 & &     s_2s_1s_0   \arrow[ur]  \arrow[r] \arrow[dr] &  s_1s_0 \arrow[r] & s_0 \arrow[r] & e \arrow[r] & s_1 \arrow[r] & s_2s_1 \\
 &s_0s_2s_1s_0 \arrow[ur] &           & s_2s_0 \arrow[ur] & & & & 
\end{tikzcd}
\end{center}

\end{example}

As seen in Example \ref{intervalex}, the global structure of intervals in the affine Weyl group with respect to the standard, opposite, and semi-infinite Bruhat orders can be quite varied. However, these orders share the following critical local property known in the literature as the \textit{lifting lemma}, the \textit{diamond lemma}, or the \textit{Z-property}.  For the standard Bruhat order, the diamond lemma as we state it comes from \cite{BGG}*{Lemma 2.5}; the analogous statement for the opposite Bruhat order follows immediately. For the semi-infinite Bruhat order, versions of the diamond lemma are given in \cite{INS}*{Lemma 4.1.4} and \cite{Ish}*{Lemma 2.6}. Our phrasing of the diamond lemma in this setting is a direct consequence of these.

\begin{lemma} \label{diamond}
Let $(l_\diamond, \leq_\diamond)$ be any of the standard, opposite, or semi-infinite Bruhat orders with their respective length functions. Let $w, v \in W$ such that $w <_\diamond v$, and let $s \in S$ be a simple reflection. Then 
\begin{itemize}
\item [(a)] Either $sw \leq_\diamond v$ or $sw <_\diamond sv$. 
\item [(b)] Either $w \leq_\diamond sv$ or $sw <_\diamond sv$. 
\end{itemize}
\end{lemma}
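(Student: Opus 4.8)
The plan is to prove all three cases of Lemma \ref{diamond} simultaneously by reducing each order to a single ``$\min/\max$'' form of the lifting property and then running one uniform case analysis. The basic mechanism is that for every simple reflection $s \in S$ and every $u \in W$, left multiplication by $s$ changes the relevant length $l_\diamond$ by exactly $\pm 1$: for the standard order this is the defining property of $l$; for the opposite order it is the same statement with reversed sign; and for the semi-infinite order it is part of the analysis of $l_\semi$ underlying \cite{INS}*{Lemma 4.1.4} and \cite{Ish}*{Lemma 2.6}. Consequently, among the two elements $\{u, su\}$ there is a well-defined $\le_\diamond$-smaller element and a $\le_\diamond$-larger one, and they form a covering pair. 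I claim it suffices to establish the single pair of inequalities
\[
\min_\diamond(w, sw) \le_\diamond \min_\diamond(v, sv) \quad\text{and}\quad \max_\diamond(w, sw) \le_\diamond \max_\diamond(v, sv),
\]
where $\min_\diamond$ and $\max_\diamond$ are taken with respect to $\le_\diamond$; this is the familiar ``$Z$-property'' form of the lifting lemma.

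Next I would establish this $Z$-property order by order. For the standard order it is the lifting property in its $\min/\max$ form, equivalent to \cite{BGG}*{Lemma 2.5} (see also \cite{BjBr}). For the opposite order I would re-derive nothing: since $\le_-$ reverses $\le$, the hypothesis $w <_- v$ reads $v < w$ in the standard order, and a direct check shows that statements (a) and (b) of Lemma \ref{diamond} for $\le_-$ are precisely statements (b) and (a), respectively, of the standard case applied to the pair $(v, w)$ (using $sw \le_- v \iff v \le sw$ and $sw <_- sv \iff sv < sw$). Thus the opposite case is a formal consequence of the standard one. For the semi-infinite order, the corresponding $Z$-property is exactly the content of \cite{INS}*{Lemma 4.1.4} and \cite{Ish}*{Lemma 2.6}, once these are transported into the left-sided, simple-reflection convention $w \xrightarrow{\beta} s_\beta w$ fixed above.

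With the $Z$-property in hand, I would finish uniformly by splitting into the four cases according to whether $sw <_\diamond w$ or $sw >_\diamond w$, and likewise whether $sv <_\diamond v$ or $sv >_\diamond v$. Reading off in each case which of $w, sw$ (resp.\ $v, sv$) realizes the $\min_\diamond$ and which the $\max_\diamond$, each case selects one of the two branches of (a) and one of (b), and that branch follows directly from the displayed inequalities. The only delicate point is strictness: in the branches whose conclusion is $sw <_\diamond sv$, the $Z$-property supplies only $sw \le_\diamond sv$, and I would upgrade this to a strict inequality using $sw \ne sv$, which holds because $w \ne v$ (as $w <_\diamond v$). The main obstacle I anticipate is not this routine case analysis but the bookkeeping for the semi-infinite order: one must confirm that the cited lemmas, sometimes phrased with a right-handed convention or in terms of general reflections $s_\beta$ rather than left multiplication by simple reflections, do yield the $\min/\max$ form in the present convention, and that simple reflections indeed change $l_\semi$ by exactly $\pm 1$ and produce covering pairs. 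This convention-matching is where the genuine content of the unification lies.
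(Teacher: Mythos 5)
Your proposal is correct and matches the paper's treatment: the paper gives no independent proof of Lemma \ref{diamond}, instead citing \cite{BGG}*{Lemma 2.5} for the standard order, noting the opposite case follows immediately, and citing \cite{INS}*{Lemma 4.1.4} and \cite{Ish}*{Lemma 2.6} for the semi-infinite case. Your additional details --- the reduction of the opposite case to the standard one by swapping (a) and (b) for the reversed pair, the uniform four-case analysis from the $\min/\max$ form, and the strictness upgrade via $sw \neq sv$ --- are all sound and simply make explicit what the paper leaves to the reader.
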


\begin{example} 

Again in type $A_2^{(1)}$, with the semi-infinite order, take for instance $w=s_1s_0$ and $v=s_0$; note that we have $s_1s_0 \leq_\semi s_0$. We get this diamond with $s=s_2$:

\begin{center}
\begin{tikzcd}
 & s_1s_0 \arrow[dr,"\alpha_1"]& \\ 
 s_2s_1s_0  \arrow[ur,"\alpha_2"]  \arrow[dr,"\alpha_1+\alpha_2"'] & & s_0  \\ 
 & s_2s_0  \arrow[ur,"\alpha_2"'] & 
\end{tikzcd}
\end{center}

\end{example}

The notation $(l_\diamond, \leq_\diamond)$ to denote an arbitrary choice of our three Bruhat orders is intentional. In what follows, many of the combinatorial constructions we introduce can be treated uniformly across the three orders provided that the order and length functions satisfy their respective diamond lemma. Thus we use the subscript $\diamond$ to highlight this property while also treating each case simultaneously.

\subsection{Demazure products associated to $(l_\diamond, \leq_\diamond)$} \label{Demprodsection}

Given a Bruhat order $(l_\diamond, \leq_\diamond)$, we next introduce the notion of the related \textit{Demazure product}. For the standard Bruhat order, this is the usual Demazure product, also called the 0-Hecke monoid structure or Coxeter monoid structure in the literature. First studied combinatorially by Norton \cite{Nor} in the setting of the Hecke algebra, the Demazure product gives an alternative associative monoid structure on any Coxeter group $W$, which we denote by $(W, \ast)$.  In alignment with our notational conventions for the length function and Bruhat order, we do not put any subscript on $\ast$ when discussing the standard Demazure product. 

We generalize this construction to produce two additional ``Demazure products" on $W$. The first, associated to the opposite Bruhat order, was considered by He \cite{He1} and called the \textit{downward} Demazure product. This has been used in the study of the ``wonderful compactification" of an adjoint semisimple group \cite{He1}, \cite{He2} and the Newton points and strata of affine Deligne--Lusztig varieties \cite{Sad}. The second is an analogous product in the semi-infinite setting, which we believe to be new. 

To begin, we record the following technical lemma which will enable us to give a uniform construction of the Demazure products. This is clear in the standard and opposite Bruhat order cases; for a proof in the semi-infinite case, see \cite{INS}*{Lemma 4.1.2}. 

\begin{lemma} \label{lengthpm1}
Let $w \in W$ and let $s \in S$ be an arbitrary simple reflection. Then $$l_\diamond(sw)=l_\diamond(w) \pm 1,$$ so that $w <_\diamond sw$ or $sw <_\diamond w$. 
\end{lemma}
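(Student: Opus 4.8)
The plan is to handle the three orders in turn, reducing the statement in each case to a single nontrivial length computation and then reading off comparability from the covering structure of the relevant order. The organizing observation is that the claim splits into two halves: the \emph{length} assertion $l_\diamond(sw) = l_\diamond(w) \pm 1$, and the \emph{order} assertion that $w$ and $sw$ are $\leq_\diamond$-comparable, with the sign of the length change dictating which element is larger. Once the length assertion is in hand, the comparability will follow formally from the definition of each order, since $w$ and $sw$ differ by a (left) reflection and by exactly one in $l_\diamond$.

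For the standard order $(l, \leq)$, the length assertion is the classical Coxeter-group fact that left multiplication by a simple reflection changes the length by exactly one: concretely $l(sw) = l(w)+1$ if $w^{-1}\alpha_s \in \Phi^+_{re}$ and $l(sw)=l(w)-1$ otherwise. Comparability is then the standard property that $l(sw)=l(w)+1$ makes $w \lessdot sw$ a covering relation (since $sw = s\cdot w$ with $s$ a reflection), and symmetrically $sw \lessdot w$ when the length drops; I would simply cite \cite{BjBr} for both. For the opposite order I would invoke $l_- = -l$: then $l_-(sw) = -l(sw) = -(l(w)\pm 1) = l_-(w)\mp 1$, and the comparability claim for $\leq_-$ is exactly the reversal of the one just established for $\leq$. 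Thus the opposite case reduces verbatim to the standard case.

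For the semi-infinite order, the length assertion $l_\semi(sw)=l_\semi(w)\pm 1$ is precisely \cite{INS}*{Lemma 4.1.2}, which I would cite directly; writing $w = u\bt{\xi}$ with $u \in \mathring{W}$ and $\xi \in \mathring{Q}^\vee$, it reduces for the finite simple reflections $s=s_i$ ($i \geq 1$) to the standard finite-type computation via $l_\semi(u\bt{\xi}) = l(u)+\langle 2\rho,\xi\rangle$, while the affine reflection $s_0 = s_\theta\bt{-\theta^\vee}$ requires tracking how the translation part interacts with the pairing against $2\rho$. For comparability I would write $s = s_\beta$ with $\beta$ the simple (hence positive real) root attached to $s$, and read the covering relation straight off the definition of $\leq_\semi$: if $l_\semi(sw) = l_\semi(w)+1$ then $w \xrightarrow{\beta} s_\beta w = sw$ is a covering arrow, giving $w <_\semi sw$; if instead the length drops, applying the same relation to $sw$ yields $sw \xrightarrow{\beta} w$, giving $sw <_\semi w$.

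I expect the only genuinely nontrivial point to be the length behavior under left multiplication by $s_0$ in the semi-infinite case, where the $-\theta^\vee$ translation couples to the pairing with $2\rho$ and the simple reduction to the finite Weyl group is unavailable; this is exactly the content borrowed from \cite{INS}, and everything else in the argument is a direct reading of the definitions of the three orders.
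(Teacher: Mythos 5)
Your proposal is correct and matches the paper's treatment: the paper likewise dispenses with the standard and opposite cases as immediate (the latter by negating the length function) and cites \cite{INS}*{Lemma 4.1.2} for the semi-infinite case, with the comparability assertion read off from the definition of the covering relations. Your write-up simply makes explicit the routine steps the paper leaves implicit.
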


With this in hand, since we now have that for any $w \in W$ and simple reflection $s$ that the pair $\{w, sw\}$ are comparable in the ordering $\leq_\diamond$, we can define the Demazure products in two steps. 

\begin{definition} \label{simpleDemprod}
Let $v \in W$ and $s \in S$ be a simple reflection. Then we define the Demazure product $s \ast_\diamond v$ via 
$$
s \ast_\diamond v := \op{max}_{\leq_\diamond} \{v, sv\} = \begin{cases} v, & l_\diamond(sv)=l_\diamond(v)-1. \\ sv, & l_\diamond (sv)=l_\diamond (v)+1. \end{cases}
$$
\end{definition}
Note that we take in the definition the maximum of $\{v, sv\}$ with respect to the order $\leq_\diamond$; this is well-defined by Lemma \ref{lengthpm1}. We can now iterate this process for arbitrary $w, v \in W$ to get their Demazure product $w \ast_\diamond v \in W$.

\begin{definition} \label{Demproddef}
Let $w, v \in W$ and fix a reduced expression $w=s_k s_{k-1} \cdots s_1$. Then we define 
$$
w \ast_\diamond v := s_k \ast_\diamond( s_{k-1} \ast_\diamond ( \cdots (s_2 \ast_\diamond (s_1 \ast_\diamond v) ) \cdots ) ) \in W.
$$
\end{definition}

By convention, we set $e \ast_\diamond v:= v$. It is not clear from the definition that $w \ast_\diamond v$ is independent of the reduced expression for $w$; however, the following proposition gives an alternative construction of $w \ast_\diamond v$ that shows that it is indeed independent. Here, we make repeated crucial use of the diamond lemma.

\begin{proposition} \label{Demprodmax} For any $w, v \in W$, $\op{max}_{\leq_\diamond} \{[e,w]v\}$ exists and is equal to $w \ast_\diamond v$, where $[e,w] \subset W$ is the interval with respect to the standard Bruhat order. In particular, $w \ast_\diamond v$ is well-defined. 
\end{proposition}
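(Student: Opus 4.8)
The plan is to induct on the standard length $l(w)$, leaning on the two-step construction of the Demazure product in Definitions \ref{simpleDemprod} and \ref{Demproddef}. I would prove by induction the statement: ``$\op{max}_{\leq_\diamond}\{[e,w]v\}$ exists, and for every reduced expression of $w$ the iterated product of Definition \ref{Demproddef} equals it.'' Since $\op{max}_{\leq_\diamond}\{[e,w]v\}$ is manifestly intrinsic to $w$ (it references no reduced word), establishing this equality simultaneously yields well-definedness. The base case $w=e$ is immediate, as $[e,e]v=\{v\}$. For the inductive step, fix any reduced expression $w=s_k s_{k-1}\cdots s_1$, set $s=s_k$ and $w'=s_{k-1}\cdots s_1$, so $w=sw'$ with $l(w')=l(w)-1$ and $sw'>w'$. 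By definition $w\ast_\diamond v = s\ast_\diamond(w'\ast_\diamond v)$, and by the inductive hypothesis $m':=w'\ast_\diamond v = \op{max}_{\leq_\diamond}\{[e,w']v\}$ exists independently of the reduced word for $w'$.

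The combinatorial input is the decomposition of the standard Bruhat interval under left multiplication: since $sw'>w'$, the subword property of the standard Bruhat order (see \cite{BjBr}) gives $[e,w]=[e,sw']=[e,w']\cup s\,[e,w']$. Translating on the right by $v$ yields $[e,w]v = A\cup sA$, where $A:=[e,w']v$ and, by induction, $\op{max}_{\leq_\diamond}A = m'$. The proposition then reduces to the following local claim: if a finite set $A$ has a $\leq_\diamond$-maximum $m'$, then $A\cup sA$ has a $\leq_\diamond$-maximum equal to $\op{max}_{\leq_\diamond}\{m',sm'\}=s\ast_\diamond m'$. This is exactly the point where the two orders interact: the interval is decomposed in the \emph{standard} order, while the maximum is taken in $\leq_\diamond$, and reconciling these is the crux.

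To prove the local claim, set $M:=\op{max}_{\leq_\diamond}\{m',sm'\}$, which is well-defined by Lemma \ref{lengthpm1}. Plainly $M\in A\cup sA$ (as $m'\in A$ and $sm'\in sA$), and every element of $A$ is $\leq_\diamond m'\leq_\diamond M$, so it remains to show $sa\leq_\diamond M$ for each $a\in A$. The case $a=m'$ is clear, since $sa=sm'\leq_\diamond M$; so assume $a<_\diamond m'$ and apply part (a) of the diamond lemma (Lemma \ref{diamond}) to $a<_\diamond m'$ and $s$: either $sa\leq_\diamond m'$ or $sa<_\diamond sm'$. In the first branch $sa\leq_\diamond m'\leq_\diamond M$, and in the second $sa<_\diamond sm'\leq_\diamond M$; either way $sa\leq_\diamond M$. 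Hence $M$ dominates all of $A\cup sA$ and lies in it, so $M=\op{max}_{\leq_\diamond}(A\cup sA)=s\ast_\diamond m' = w\ast_\diamond v$. As this argument applies verbatim to any reduced expression of $w$ (any choice of leftmost letter), the value is the intrinsic quantity $\op{max}_{\leq_\diamond}\{[e,w]v\}$, closing the induction and giving independence of the reduced word.

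The main obstacle is conceptual rather than computational: it is precisely the interaction of the two orders flagged above, together with the realization that the one property shared by all three Bruhat orders — the diamond lemma — is exactly strong enough to push the standard-order interval decomposition through a $\leq_\diamond$-maximum. Once the local claim is isolated, only part (a) of Lemma \ref{diamond} is needed and the case analysis is short. The one point I would verify carefully is that peeling off the \emph{leftmost} letter is compatible with the left-multiplication form $[e,sw']=[e,w']\cup s\,[e,w']$ of the subword property, since that is what matches the outermost factor $s_k$ appearing in Definition \ref{Demproddef}.
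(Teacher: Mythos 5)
Your proof is correct and follows essentially the same route as the paper: induction on $l(w)$, peeling off the leftmost simple reflection, and using part (a) of the diamond lemma for $\leq_\diamond$ to bound $sa$ by $\max_{\leq_\diamond}\{m',sm'\}$. The only (cosmetic) differences are that you invoke the standard-order decomposition $[e,sw']=[e,w']\cup s[e,w']$ as a known fact where the paper rederives it via the standard diamond lemma, and you merge the paper's two cases ($x_0v<_\diamond sx_0v$ versus $sx_0v<_\diamond x_0v$) into a single argument about $M=\max_{\leq_\diamond}\{m',sm'\}$.
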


\begin{proof}
We induct on $l(w)$, the standard length of $w$. The case $l(w)=0$ is clear by convention and the case $l(w)=1$ holds by Definition \ref{simpleDemprod}. Assume that it holds for $l(w)=k$, and suppose that $l(w)=k+1$. Write $w=sw'$ for some simple reflection $s$ and $w' \in W$ with $l(w')=k$. Then by definition, 
$$
w \ast_\diamond v = s \ast_\diamond (w' \ast_\diamond v)
$$
applying Definition \ref{Demproddef} for a particular reduced expression for $w$ (and hence also $w'$). By induction, $w' \ast_\diamond v$ does not depend on the choice of reduced expression and can be written as 
$$
w' \ast_\diamond v = x_0 v 
$$
for some $x_0 \in [e, w']$. Note that, for any $x \in [e, w']$, we have $xv \leq_\diamond x_0 v$. We consider two cases: 
\begin{enumerate} 
\item Suppose $x_0v <_\diamond sx_0v$, so that $w \ast_\diamond v= sx_0v$. Since $x_0 \leq w'$ and $w' < sw'=w$, we have by the diamond lemma (a) applied to $x_0 < w$ that $sx_0 \leq sw'=w$. Now consider any $x \leq w=sw'$. Then again by the diamond lemma (b) applied to $x \leq w$, either $x \leq w'$ or $sx \leq sw$.  In the first case by the induction hypothesis we have $xv \leq_\diamond x_0v$ since $x \in [e,w']$. In the latter case, write $x=sx'$; thus $x' \leq w'$. Then $x'v \leq_\diamond x_0v$. Applying the diamond lemma (a) here for $x'v \leq_\diamond x_0v$, we get that either 
$$
sx'v=xv \leq_\diamond x_0v <_\diamond sx_0v = w \ast_\diamond v,
$$
or else
$$
sx'v=xv \leq_\diamond sx_0v = w \ast_\diamond v.
$$
Thus for any $x \in [e,w]$, we have that $xv \leq_\diamond sx_0v=w \ast_\diamond v$. So, $w \ast_\diamond v = (sx_0)v = \op{max}_{\leq_\diamond} \{[e,w]v\}$. 

\item Suppose $sx_0v <_\diamond x_0v$, so that $w \ast_\diamond v=x_0v$. Take any $x \leq w=sw'$. Applying diamond lemma (b) to $x \leq sw'$ we have either $x \leq w'$ or $sx \leq w'$. In the former case, by the induction hypothesis since $x \in [e, w']$ we have $xv \leq_\diamond x_0v = w \ast_\diamond v$. Else, write $x=sx'$ for $x' \in [e, w']$. Then again $x'v \leq_\diamond x_0v$, and by the diamond lemma (a) for  $x'v \leq_\diamond x_0v$ we get either 
$$
xv=sx'v \leq_\diamond sx_0v <_\diamond x_0v = w \ast_\diamond v,
$$
or else
$$
xv=sx'v \leq_\diamond x_0v =w \ast_\diamond v.
$$
Thus for any $x \in [e,w]$, we have that $xv \leq w \ast_\diamond v = (x_0)v = \op{max}_{\leq_\diamond}\{[e,w]v\}$. 
\end{enumerate}
\end{proof}

\begin{remark}
The existence of $\op{max}_{\le_\diamond}\{[e,w]v\}$ is also proven by Chen and Dyer \cite{CD}, even for more general twisted orders $\le_\eta$. We will examine this in more detail in Section \ref{hummingbirds}. 
\end{remark}

\begin{remark} For the standard Demazure product $\ast$, there is more flexibility with the statement of Proposition \ref{Demprodmax}. That is, it is also known (cf. \cite{BJK}*{Proposition 6.2, Corollary 6.3}, for example) that 
$$
w \ast v = \op{max}_{\leq}\{w [e,v]\}=\op{max}_{\leq} \{xy: x \in [e,w], \ y \in [e,v]\}.
$$
This is \emph{not} the case, however, for $\ast_{-}$ and $\ast_\semi$. In the former, one can define a \emph{left-} and \emph{right-downward Demazure product} \cite{Sad} which are in general different elements. For the latter, one could define a ``right" semi-infinite Demazure product in an analogous way; we do not have need for such a construction here. 
\end{remark}

We conclude this section by recording two additional combinatorial properties of the Demazure products $\ast_\diamond$. For the standard Demazure product, these appear in the literature as the following facts:
\begin{enumerate}
\item $\ast$ is associative: $(w \ast v) \ast u = w \ast (v \ast u)$, and 
\item If $w \ast v = xv$ for the unique $x \in [e,w]$ given by Proposition \ref{Demprodmax}, then $l(w \ast v)=l(x)+l(v).$
\end{enumerate}
However, it is straightforward to check that neither $\ast_-$ nor $\ast_\semi$ satisfy the naive analogues of these properties. Instead, the correct generalization is to consider how $\ast_-$ and $\ast_\semi$ \textit{interact} with $\ast$. As we will not make use of these results, we give them here without proof; these are both straightforward consequences again of the diamond lemma and induction as in the previous proof. 

\begin{proposition} \label{newmonoidpluslengthadd}

\begin{enumerate}
\item For any $w, v, u \in W$, we have 
$$
(w \ast v) \ast_\diamond u = w \ast_\diamond (v \ast_\diamond u);
$$
that is, $\ast_\diamond$ is a left monoid action of $(W, \ast)$ on $W$. 

\item Let $w, v \in W$ and write $w \ast_\diamond v =x_0 v$ for $x_0 \in [e,w]$. Then 
$$
l_\diamond(w \ast_\diamond v) = l(x_0)+ l_\diamond(v),
$$
where as always $l(\cdot)$ is the standard length function. 
\end{enumerate}
\end{proposition}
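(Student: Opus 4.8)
The plan is to prove both statements by induction on the standard length $l(w)$, in each case peeling off a simple reflection $w = sw'$ and reducing to an analysis of the two cases in Definition \ref{simpleDemprod}. Throughout, the two essential tools are the maximum characterization $w \ast_\diamond v = \max_{\leq_\diamond}\{[e,w]v\}$ from Proposition \ref{Demprodmax} and the dichotomy of Lemma \ref{lengthpm1}, which together let us convert statements about the combinatorially-defined products into statements about a single representative $x_0 \in [e,w]$ with $x_0 v = w \ast_\diamond v$ lying in a \emph{standard} Bruhat interval. The recurring bridge between $\leq_\diamond$ and the standard order is the following familiar fact about the latter (see \cite{BjBr}): if $sp < p$ in the standard order, then $x \leq p \implies sx \leq p$; dually, if $sw' > w'$, then $x \leq w' \implies sx \leq sw'$.

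For part (1), I would first reduce to the case $l(w) = 1$. Writing $w = sw'$ and applying Definition \ref{Demproddef} on both sides together with the inductive hypothesis $(w' \ast v) \ast_\diamond u = w' \ast_\diamond (v \ast_\diamond u)$, the identity collapses to showing $(s \ast p) \ast_\diamond u = s \ast_\diamond (p \ast_\diamond u)$ for an arbitrary simple $s$ and $p = w' \ast v$. If $p < sp$ in the standard order then $s \ast p = sp$ with $l(sp) = l(p)+1$, and choosing a reduced word for $sp$ beginning with $s$ makes both sides equal to $s \ast_\diamond (p \ast_\diamond u)$ directly from Definition \ref{Demproddef}. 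If instead $sp < p$, then $s \ast p = p$ and it suffices to prove $s \ast_\diamond z = z$ for $z := p \ast_\diamond u$. Writing $z = x_0 u$ with $x_0 \in [e,p]$ via Proposition \ref{Demprodmax}, the standard fact above gives $sx_0 \leq p$, so $sx_0 u = sz$ lies in $[e,p]u$ and is therefore $\leq_\diamond z$; since $sz \neq z$, Lemma \ref{lengthpm1} forces $sz <_\diamond z$, i.e.\ $s \ast_\diamond z = z$, as needed.

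For part (2), I would again induct on $l(w)$ and write $w = sw'$, so that by the inductive hypothesis $w' \ast_\diamond v = x_0' v$ with $x_0' \in [e,w']$ and $l_\diamond(x_0'v) = l(x_0') + l_\diamond(v)$. Then $w \ast_\diamond v = s \ast_\diamond (x_0' v)$, and I would split according to Definition \ref{simpleDemprod}. If $sx_0'v <_\diamond x_0'v$, the product is $x_0'v$, the representative is unchanged, and the length formula is inherited verbatim. If $x_0'v <_\diamond sx_0'v$, the product is $sx_0'v$ so the new representative is $x_0 = sx_0'$, and $l_\diamond(w \ast_\diamond v) = l_\diamond(x_0'v) + 1 = l(x_0') + l_\diamond(v) + 1$; it remains to verify $l(sx_0') = l(x_0') + 1$ in the standard order. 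This is where maximality enters: were $sx_0' < x_0'$ in the standard order, then $sx_0' \leq w'$, so $sx_0'v \in [e,w']v$ and hence $sx_0'v \leq_\diamond x_0'v = w' \ast_\diamond v$, contradicting $x_0'v <_\diamond sx_0'v$. Thus $sx_0' > x_0'$, giving $l(x_0) = l(sx_0') = l(x_0')+1$ and the desired formula; the subword property also ensures $sx_0' \leq w$, so $x_0$ is a legitimate representative.

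The main obstacle, and the conceptual heart of both arguments, is that the representative $x_0$ is controlled by a \emph{standard} Bruhat interval while the maximality defining it is with respect to the possibly very different order $\leq_\diamond$. The crucial move in each descent/ascent dichotomy is to produce a competitor of the form $sx_0 u$ (respectively $sx_0' v$) that provably lies in the relevant standard interval --- via the lifting/subword properties --- and then to invoke Lemma \ref{lengthpm1} to pin down its $\leq_\diamond$-relation to $x_0 u$. Once this translation is in place the inductions run routinely, and no property of $\leq_\diamond$ beyond the diamond lemma (through Proposition \ref{Demprodmax}) and the $\pm 1$ length change is required, which is precisely what allows the uniform $\diamond$-treatment to succeed.
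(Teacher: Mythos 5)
Your proof is correct, and it follows exactly the route the paper indicates: the paper actually omits the proof of Proposition \ref{newmonoidpluslengthadd}, stating only that both parts are ``straightforward consequences of the diamond lemma and induction as in the previous proof,'' and your induction on $l(w)$ --- reducing to the two cases of Definition \ref{simpleDemprod} and using the maximum characterization of Proposition \ref{Demprodmax} together with the standard-order lifting property to control the representative $x_0$ --- is precisely that argument carried out in detail. No gaps.
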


\begin{remark} For $\ast_-$, the two parts of Proposition \ref{newmonoidpluslengthadd} were first proven by He in \cite{He3} and \cite{He2}, respectively. 
\end{remark}

\section{Geometry of the Affine Weyl Group} \label{geom-orders}

As in Section \ref{affDem}, the positive real affine roots are given by 
$$
\Phi^+_{re} = \{\alpha+k\delta: k>0, \text{ or } k=0\text{ and }\alpha\in \mathring\Phi^+\}.
$$
For an affine root $\beta = \alpha+k\delta$, we use the shorthand $\beta \succ_{+} 0$ (or usually just $\beta \succ 0$) to indicate that $\beta\in \Phi^+_{re}$. 

Similarly, the negative real roots $\Phi^-_{re}$ are defined as $\Phi^-_{re}:=-\Phi^+_{re}$. 
We write $\beta \succ_- 0 $ for $\beta \in \Phi^-_{re}$. 

There is also the set of \emph{semi-infinite positive} real roots, defined by 
$$
\Phi_{\frac{\infty}{2}}^+:=\{\alpha+k\delta: \alpha \in \mathring\Phi^+\};
$$
note that both positive \textit{and} negative real roots can be semi-infinite positive. The shorthand $\beta \succ_{\frac{\infty}{2}}0$ is used to indicate a semi-infinite positive root.

\subsection{Orders and inversions}

The different Bruhat orders can each be described in terms of inverting positive roots $\beta\in \Phi^+_{re}$. For the standard and opposite Bruhat orders this is textbook; for the semi-infinite Bruhat order, see for example \cite{INS}*{\S 2.4, \S A.3}. In the following lemma, $\diamond$ can be substituted with $+, -$, or $\frac{\infty}{2}$ to get a uniform alternative definition of these orders.  

\begin{lemma}[Uniform order definition]\label{uniform}
Let $w\in W$ and $\beta\in \Phi^+_{re}$. Then $w \le_{\diamond} s_\beta w$ if and only if $w^{-1} \beta\succ_\diamond 0$. 
\end{lemma}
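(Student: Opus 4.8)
The plan is to reduce all three cases to a single sign computation. Each order $\le_\diamond$ is graded by its length function $l_\diamond$, and $w$ and $s_\beta w$ differ by the single reflection $s_\beta$; the first step is to observe that for each order $w$ and $s_\beta w$ are always $\le_\diamond$-comparable, so that whichever has the smaller $l_\diamond$-value lies below the other. Granting this, the lemma becomes equivalent to the assertion
\[
w \le_\diamond s_\beta w \iff l_\diamond(s_\beta w) > l_\diamond(w),
\]
and it then remains to identify the sign of $l_\diamond(s_\beta w) - l_\diamond(w)$ with the condition $w^{-1}\beta \succ_\diamond 0$.

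For the standard order ($\diamond = +$) both ingredients are classical (see \cite{BjBr}): any $w$ and $s_\beta w$ are Bruhat-comparable, and $l(s_\beta w) > l(w)$ holds exactly when $w^{-1}\beta \in \Phi^+_{re}$, i.e. $w^{-1}\beta \succ_+ 0$. The opposite order ($\diamond = -$) is then purely formal, since $\le_-$ reverses $\le$ and $l_- = -l$:
\[
w \le_- s_\beta w \iff s_\beta w \le w \iff l(s_\beta w) < l(w) \iff w^{-1}\beta \in \Phi^-_{re},
\]
and $w^{-1}\beta \in \Phi^-_{re}$ is precisely the statement $w^{-1}\beta \succ_- 0$.

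For the semi-infinite order ($\diamond = \semi$) I would work through the realization $W \cong \mathring{W} \ltimes \mathring{Q}^\vee$. Writing $w = x\bt{\xi}$ with $x \in \mathring{W}$, $\xi \in \mathring{Q}^\vee$ and $\beta = \alpha + k\delta$ with $\alpha \in \mathring{\Phi}$, the reflection $s_\beta$ is a translate $s_\alpha \bt{k\alpha^\vee}$ of $s_\alpha$, so that $s_\beta w = (s_\alpha x)\bt{\xi + k(x^{-1}\alpha)^\vee}$. Feeding this into $l_\semi(y\bt{\zeta}) = l(y) + \langle 2\rho, \zeta\rangle$ yields
\[
l_\semi(s_\beta w) - l_\semi(w) = \big(l(s_\alpha x) - l(x)\big) + k\,\langle 2\rho, (x^{-1}\alpha)^\vee\rangle.
\]
On the other hand, the finite part of $w^{-1}\beta$ is exactly $x^{-1}\alpha$, so $w^{-1}\beta \succ_\semi 0$ means precisely $x^{-1}\alpha \in \mathring{\Phi}^+$. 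The task is then to show that the displayed difference is positive exactly when $x^{-1}\alpha \in \mathring{\Phi}^+$; this sign bookkeeping, together with the $\le_\semi$-comparability of $w$ and $s_\beta w$ needed for the reduction in the first paragraph, is exactly the semi-infinite inversion-set theory recorded in \cite{INS}*{\S 2.4, \S A.3}, which I would invoke.

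The genuine obstacle is this last case. Both facts used freely for the standard order---that $w$ and $s_\beta w$ are always comparable, and that the sign of the length change can be read off from the positivity of $w^{-1}\beta$---must be re-established for $\le_\semi$, whose defining chains only proceed by covering moves along positive real roots, so that a single non-simple reflection $s_\beta$ need not be a cover. The \emph{subtle point} is that the two terms in the displayed difference can carry opposite signs and comparable magnitudes (precisely when $\alpha \in \mathring{\Phi}^-$, which is forced by $k \ge 1$), so the desired conclusion is not a term-by-term domination but a global inversion count. This is exactly the content supplied by the semi-infinite theory of \cite{INS}, and is where the real work of the lemma lies.
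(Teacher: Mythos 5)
Your proposal is correct and follows the same route as the paper, which in fact offers no proof of this lemma at all: it declares the $+$ and $-$ cases textbook and points to \cite{INS}*{\S 2.4, \S A.3} for the semi-infinite case, which is exactly the division of labor your write-up makes explicit (your reduction of the $\semi$ case to the sign of $l_\semi(s_\beta w)-l_\semi(w)$, plus the comparability of $w$ and $s_\beta w$, is precisely what those sections of \cite{INS} supply). One cosmetic slip: in your final parenthetical the implication runs backwards --- it is $\alpha\in\mathring{\Phi}^-$ that forces $k\ge 1$ (since $\beta=\alpha+k\delta$ must be a positive affine root), not the reverse.
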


\begin{lemma}\label{seahorse}
Let $v,w\in W$ such that $v\le_{\diamond} w$. Let $\eta$ be a coweight such that $\gamma\succ_\diamond 0\implies \langle \gamma, \eta\rangle \ge 0$. Then $v\eta - w\eta$ is a nonnegative combination of simple coroots. 
\end{lemma}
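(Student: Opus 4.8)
The plan is to peel off the order one covering relation at a time and telescope. Combining the definitions of the three orders with the uniform criterion of Lemma \ref{uniform}, the relation $v \le_\diamond w$ is witnessed by a finite sequence
\[
v = u_0,\ u_1,\ \dots,\ u_k = w, \qquad u_i = s_{\beta_i} u_{i-1},
\]
with each $\beta_i \in \Phi^+_{re}$ a genuine positive real root and each $u_{i-1}^{-1}\beta_i \succ_\diamond 0$. Indeed, for the semi-infinite order this is exactly the definition of $\le_\semi$, while for the standard and opposite orders one takes a saturated chain of covering relations and applies Lemma \ref{uniform} at each step (in the opposite case the chain is a reversed standard chain, and the criterion $u_{i-1}^{-1}\beta_i \succ_- 0$ is precisely what one reads off). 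Writing
\[
v\eta - w\eta = \sum_{i=1}^k \left( u_{i-1}\eta - u_i \eta\right),
\]
it then suffices to show that each summand is a nonnegative combination of simple coroots.

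For a single step I would compute directly. Since $s_{\beta}$ acts on a coweight $\xi$ by $s_\beta \xi = \xi - \langle \beta, \xi\rangle \beta^\vee$, and $W$ preserves the pairing between roots and coweights, we get
\[
u_{i-1}\eta - u_i\eta = u_{i-1}\eta - s_{\beta_i}(u_{i-1}\eta) = \langle \beta_i,\, u_{i-1}\eta\rangle\, \beta_i^\vee = \langle u_{i-1}^{-1}\beta_i,\, \eta\rangle\, \beta_i^\vee.
\]
By construction $u_{i-1}^{-1}\beta_i \succ_\diamond 0$, so the hypothesis on $\eta$ makes the scalar $\langle u_{i-1}^{-1}\beta_i, \eta\rangle$ nonnegative.

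It remains only to note that $\beta_i^\vee$ is itself a nonnegative combination of the simple coroots, which holds because $\beta_i$ is a bona fide positive real root: for an affine (indeed any Kac--Moody) root system, $\beta \in \Phi^+_{re}$ if and only if $\beta^\vee$ is a nonnegative integer combination of the simple coroots $\alpha_0^\vee, \dots, \alpha_n^\vee$. Granting this, each summand is a nonnegative scalar times a nonnegative combination of simple coroots, and summing over $i$ gives the result. I expect the only real subtlety to be bookkeeping rather than mathematics: one must keep the two notions of positivity straight, using the $\diamond$-positivity of $u_{i-1}^{-1}\beta_i$ only to control the sign of the scalar, while using the genuine positivity of $\beta_i \in \Phi^+_{re}$ to control the coroot $\beta_i^\vee$. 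Guaranteeing that the chain above exists uniformly for all three orders is what Lemma \ref{uniform} provides, and this is the step I would be most careful to state cleanly.
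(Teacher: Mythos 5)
Your proof is correct and follows essentially the same route as the paper's: reduce to a single step $v \le_\diamond s_\beta v$ via a chain of covering relations, compute $v\eta - s_\beta v\eta = \langle v^{-1}\beta, \eta\rangle\,\beta^\vee$, and use Lemma \ref{uniform} together with the hypothesis on $\eta$ to control the sign of the scalar. The only difference is that you spell out the telescoping and the positivity of $\beta^\vee$ explicitly, which the paper leaves implicit.
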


\begin{proof}
It suffices to consider the case $v \le_\diamond w =: s_\beta v$. By Lemma \ref{uniform}, $v^{-1}\beta\succ_\diamond 0$. Hence $\langle v^{-1} \beta, \eta \rangle \ge 0$ and 
$$
v\eta - s_\beta v \eta = \langle \beta, v \eta \rangle \beta^\vee = \langle v^{-1} \beta, \eta \rangle \beta^\vee,
$$
which belongs to the nonnegative span of simple coroots. 
\end{proof}

We will frequently want to identify $W$ with a subset of the flag varieties $G(\mathcal{K})/I$, $G(\mathcal{K})/I^-$, and $G(\mathcal{K})/\tfin U(\mathcal{K})$, and as a preliminary step, with a subset of $G(\mathcal{K})/\tfin$. 
This correspondence comes with a twist on the translations. Specifically, we will map 
$$
w \bt \xi \in \mathring W \ltimes \mathring Q^\vee \mapsto w \gt {-\xi} \in G(\mathcal{K})/\tfin,
$$
where $\gt {-\xi}$ stands for the element $-\xi \in \op{Hom}(\C^*, \tfin) = \tfin(\C[t,t^{-1}])\subset \tfin(\mathcal{K})\subset G(\mathcal{K})$. This twist allows us to neatly align the various Bruhat orders with the geometry of closure relations; see Remark \ref{choice-of-twist}.

\subsection{Positive order}

For $  w \in W$, recall that ${\bf X}(  w)$ denotes the closure of the $I$-orbit through $  w$ in $G(\mathcal{K})/I$: 
$$
{\bf X}(  w) = \overline{I   w I} \subseteq G(\mathcal{K})/I
$$

The following lemma is the most direct analogue of the relationship between geometry and Bruhat order in the finite-dimensional setting. Its proof is omitted, though it serves as the archetype for Lemmas \ref{chopin} and \ref{liszt}. 

\begin{lemma}
Suppose $\beta = \alpha+k\delta\succ_{+}0$ and $  w \le s_\beta   w$. Then 
$$
{\bf X}(  w) \subseteq {\bf X} (s_\beta   w).
$$
\end{lemma}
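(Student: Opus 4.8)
I would deduce the inclusion from the standard dictionary between the (standard) Bruhat order and closure relations among Schubert varieties in $G(\mathcal{K})/I$, namely that for $v,u \in W$ one has $\mathbf{X}(v) \subseteq \mathbf{X}(u)$ if and only if $v \le u$ (see \cite{KumarBook}). Granting this equivalence, the lemma is immediate: the hypothesis $w \le s_\beta w$ is exactly the relation needed, so transitivity of the order gives $\{x : x \le w\} \subseteq \{x : x \le s_\beta w\}$ and hence $\mathbf{X}(w) \subseteq \mathbf{X}(s_\beta w)$. Via Lemma \ref{uniform}, the hypothesis $w \le s_\beta w$ is equivalent to the root-theoretic condition $w^{-1}\beta \succ_+ 0$, which is the form in which the analogous hypotheses will appear in Lemmas \ref{chopin} and \ref{liszt}; stating the argument so that only this uniform input is used is what makes the present lemma an archetype for the opposite and semi-infinite cases.

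\textbf{Proving the underlying equivalence.} Since the lemma is meant as a template, I would prove the direction $v \le u \implies \mathbf{X}(v) \subseteq \mathbf{X}(u)$ rather than merely cite it, so that the later lemmas can be modeled on the same reduction. Here each $\mathbf{X}(x)$ is an irreducible projective variety inside the ind-scheme $Fl_G$ with dense cell $C(x) = IxI/I$, so closures behave as in the finite-dimensional case. I would induct on $l(u)$. The key geometric input is the minimal parabolic $P_i = I \sqcup Is_iI \subset G(\mathcal{K})$ and the associated $\mathbb{P}^1$-bundle $\pi_i \colon Fl_G \to G(\mathcal{K})/P_i$: when $x < s_ix$, the set $\pi_i^{-1}(\pi_i(\mathbf{X}(x)))$ is closed, $P_i$-stable, contains $\mathbf{X}(x)$, and equals $\mathbf{X}(s_ix)$ (it is the union of full $\mathbb{P}^1$-fibers over the image Schubert variety), giving the left simple-reflection case $\mathbf{X}(x)\subseteq\mathbf{X}(s_ix)$. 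The right simple-reflection case $x < xs \implies \mathbf{X}(x)\subseteq \mathbf{X}(xs)$ follows symmetrically by sweeping the preimage $\overline{IxI}\subset G(\mathcal{K})$ by $P_s$ on the right: $\overline{IxI}\,P_s$ is irreducible with dense cell $IxsI$, so its image in $Fl_G$ has closure $\mathbf{X}(xs)$ and contains $\mathbf{X}(x)$. For the inductive step with $v < u$, pick a right descent $s$ of $u$ and invoke the lifting property of the standard Bruhat order: if $vs > v$ then $v \le us$, whence $\mathbf{X}(v) \subseteq \mathbf{X}(us) \subseteq \mathbf{X}(u)$ by induction and the simple case; if $vs < v$ then $vs \le us$, and applying the right $P_s$-sweep to the (nested) inclusion $\mathbf{X}(vs)\subseteq\mathbf{X}(us)$ furnished by induction yields $\mathbf{X}(v)\subseteq\mathbf{X}(u)$, using that the sweep is monotone under inclusion.

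\textbf{The main obstacle.} The chief complication is precisely that $s_\beta$ is a general reflection, not a simple one, and that we are working with left cosets $G(\mathcal{K})/I$; this forces the reduction to simple reflections and, in the descent case, the use of the right-hand $P_s$-sweep together with the lifting property rather than a single projection. One must also be mildly careful that the ind-scheme structure of $Fl_G$ does not interfere with the irreducibility-and-dense-cell arguments, but since every $\mathbf{X}(x)$ is a genuine finite-dimensional projective variety this is routine. I expect no serious difficulty beyond bookkeeping; the value of writing it out is that the same three ingredients (a $\mathbb{P}^1$-bundle, a one-step sweep, and a lifting/diamond property) will transpose to the opposite and semi-infinite settings of Lemmas \ref{chopin} and \ref{liszt}, where the relevant diamond property is supplied uniformly by Lemma \ref{diamond}.
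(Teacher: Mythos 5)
Your argument is essentially correct, but it is not the route the paper has in mind. The paper omits the proof precisely because it intends a direct, one-step limit computation: one shows that $w$ is a limit point of $Is_\beta wI$ by writing $s_\beta$ via the $SL_2$ identity in the root subgroups $U_{\pm\beta}$, conjugating through $w$, and letting the parameter go to infinity; since $\overline{Is_\beta wI}$ is $I$-stable and closed, $w\in\overline{Is_\beta wI}$ already gives ${\bf X}(w)\subseteq{\bf X}(s_\beta w)$. That computation handles an arbitrary reflection $s_\beta$ in one stroke, and it is literally the template reused in the proofs of Lemmas \ref{chopin} and \ref{liszt}. Your proof instead reduces to the full closure-order dictionary, established by induction on $\ell(u)$ with minimal parabolics, the right $P_s$-sweep, and the lifting property. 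This is a perfectly valid (and textbook, cf.\ \cite{KumarBook}) derivation, and it buys you the stronger statement ${\bf X}(v)\subseteq{\bf X}(u)\iff v\le u$; but your closing claim that these three ingredients ``transpose'' to the opposite and semi-infinite settings is optimistic. In $G(\mathcal{K})/I^-$ and $G(\mathcal{K})/\tfin U(\mathcal{K})$ the relevant orbit closures are infinite-dimensional, there is no induction on the length of the ambient Schubert variety, and no analogous $\mathbb{P}^1$-bundle sweep is available; the paper's actual proofs of Lemmas \ref{chopin} and \ref{liszt} use none of your three ingredients and instead repeat the direct $SL_2$ limit trick. So as an ``archetype'' your proof does not serve the role the paper assigns to this lemma.

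One local error worth fixing: for the projection $\pi_i\colon Fl_G\to G(\mathcal{K})/P_i$ one has $\pi_i^{-1}(\pi_i({\bf X}(x)))=\overline{IxP_i}/I$, which equals ${\bf X}(xs_i)$ when $x<xs_i$ and ${\bf X}(x)$ otherwise; it is governed by \emph{right} multiplication by $s_i$, not by the comparison of $x$ with $s_ix$ as you state (take $x=s_j$, $j\ne i$, to see the discrepancy). This does not sink your argument, because your induction only ever invokes the right $P_s$-sweep (your two ``cases'' are in fact the same construction), but as written the left simple-reflection case is never correctly established and the sentence should be corrected or deleted.
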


\subsection{Negative order}

Let ${\bf Y}(w)$ denote the closure of the $I$-orbit through $w$ in $G(\mathcal{K})/I^-$: 
\[{\bf Y}(w) = \overline{I w I^-} \subseteq G(\mathcal{K})/I^-.\] 
Note that this is infinite-dimensional. 

\begin{lemma}\label{chopin}
	Suppose $\beta = \alpha+k\delta \succ_+ 0$ and $w \leq_- s_\beta w$. Then \[{\bf Y}(w) \subseteq {\bf Y}(s_{\beta}w).\] 
\end{lemma}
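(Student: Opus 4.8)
The plan is to mirror the (omitted) proof of the preceding positive-order lemma, changing only the single step where the hypothesis enters. Since ${\bf Y}(s_\beta w) = \overline{I s_\beta w I^-}$ is closed and $I$-stable, it suffices to show that the point $w$ lies in this closure; by $I$-equivariance this is equivalent to exhibiting $e$ as a limit point of $w^{-1}(I s_\beta w I^-)$. Because $\tfin \subseteq I^-$, a limit computed in $G(\mathcal{K})/\tfin$ descends to $G(\mathcal{K})/I^-$, so I would carry out the computation there. Throughout I use the twist convention $w\bt\xi \mapsto w\gt{-\xi}$, writing $s_\beta = s_\alpha\bt{k\alpha^\vee} \mapsto s_\alpha \gt{-k\alpha^\vee}$ and $w = \mathring w\bt\xi$.

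Next I would exhibit the same explicit one-parameter family as in the positive case, namely
$$\gt\xi w^{-1}\, U_\alpha(a t^k)\, s_\alpha \gt{-k\alpha^\vee}\, w \gt{-\xi}\; a^{-w^{-1}\alpha^\vee}\, U_{w^{-1}\alpha}\!\left(a t^{k+\langle w^{-1}\alpha,\xi\rangle}\right).$$
Conjugating the leading root subgroup through $w$ and simplifying by rank-one ($SL_2$) theory collapses this product — after setting $b = a t^{k+\langle w^{-1}\alpha,\xi\rangle}$, so that the accumulated torus factor is $(w^{-1}\alpha)^\vee(b^{-1})$ — to the single root-subgroup element $U_{-w^{-1}\alpha}(b^{-1})$, which tends to $e$ as $a \to \infty$. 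This produces $e$ as the desired limit point in $G(\mathcal{K})/\tfin$, and hence $w \in {\bf Y}(s_\beta w)$.

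The only step that differs from the positive case, and the crux of the argument, is verifying that this family genuinely lies in $I s_\beta w I^-$ rather than $I s_\beta w I$. The leading factor $U_\alpha(at^k) = U_\beta(a)$ belongs to $I$ because $\beta = \alpha + k\delta \succ_+ 0$, a shared hypothesis. The trailing factor $a^{-w^{-1}\alpha^\vee} U_{w^{-1}\alpha}(at^{k'})$, with $k' = k + \langle w^{-1}\alpha,\xi\rangle$, has torus part in $\tfin \subseteq I^-$, and its root-subgroup part is $U_{w^{-1}\beta}(a)$, which lies in $I^-$ precisely when $w^{-1}\beta \succ_- 0$. This is exactly what the hypothesis $w \leq_- s_\beta w$ delivers through the uniform order criterion (Lemma \ref{uniform}); in the positive case the analogous factor instead fell into $I$ via $w^{-1}\beta \succ_+ 0$. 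Thus the membership bookkeeping under the twist — confirming that each factor sits in the correct Iwahori — is where the real care is needed, while the limit itself is the same rank-one computation as in the archetype.
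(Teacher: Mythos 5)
Your proof is correct and is essentially the paper's argument: both come down to the same rank-one computation, with the leading factor $U_\beta(a)$ absorbed into $I$ via $\beta \succ_+ 0$ and the trailing conjugated factor absorbed into $I^-$ via $w^{-1}\beta \succ_- 0$ (which the paper phrases as $w\gamma = -\beta$ for some $\gamma \succ_+ 0$, obtained from $s_\beta w \le w$ in the standard order). The paper packages the computation slightly differently --- it uses the $SL_2$ identity to rewrite $Is_\beta wI^-/I^-$ as $IU_{-\beta}(-a^{-1})wI^-/I^-$ and lets $a\to\infty$, rather than conjugating the whole family by $w^{-1}$ --- but the content and the use of the hypothesis are the same.
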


\begin{proof}
	Note that if $w \leq_-s_{\beta}w$ then we have $w \geq s_{\beta}w$ in the standard Bruhat order. Thus there is some positive root $\gamma$ such that $w \gamma=-\beta$. Using the well-known $SL_2$ formula we obtain, for $a\in \C^*$, \[Is_{\beta}wI^{-}/I^{-}\]\[=IU_{\beta}(a)U_{-\beta}(-a^{-1})U_{\beta}(a)\beta^\vee(a)wI^{-}/I^{-}\]\[=IU_{-\beta}(-a^{-1})U_{\beta}(a)wI^{-}/I^{-}.\] Since $U_{\beta}(a)w=wU_{-\gamma}(a)$, we finally obtain \[Is_{\beta}wI^{-}/I^{-}=IU_{-\beta}(-a^{-1})wI^{-}/I^{-}.\] We obtain the desired result after taking the limit $a \rightarrow \infty$.
\end{proof}	

\subsection{Semi-infinite order} 

For $\tilde w\in W$, let ${\bf Q}(\tilde w)$ denote the closure of the $I$-orbit through $\tilde w$ in $G(\mathcal{K})/
\tfin
U(\mathcal{K})$: 
$$
{\bf Q}(\tilde w) = \overline{I \tilde w 
\tfin
U(\mathcal{K})} \subseteq G(\mathcal{K})/
\tfin
U(\mathcal{K})
$$

\begin{lemma}\label{liszt}
Suppose $\beta = \alpha+k\delta \succ_{+}0$ and $\tilde w \le_{\frac{\infty}{2}} s_\beta \tilde w$. Then 
$$
{\bf Q}(\tilde w) \subseteq {\bf Q}(s_\beta \tilde w). 
$$
\end{lemma}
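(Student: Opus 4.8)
The plan is to follow the proof of Lemma \ref{chopin} closely in spirit, replacing the stabilizer $I^-$ by $\tfin U(\mathcal{K})$ and the opposite order by the semi-infinite order. Since ${\bf Q}(s_\beta \tilde w) = \overline{I s_\beta \tilde w\, \tfin U(\mathcal{K})}$ is closed and $I$-stable, it suffices to show that the single coset $\tilde w\, \tfin U(\mathcal{K})$ is a limit point of the orbit $I s_\beta \tilde w\, \tfin U(\mathcal{K})$: an $I$-stable closed set containing $\tilde w\,\tfin U(\mathcal{K})$ must then contain $I\tilde w\,\tfin U(\mathcal{K})$ and hence all of ${\bf Q}(\tilde w)$.

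First I would translate the hypothesis $\tilde w \le_\semi s_\beta \tilde w$ via Lemma \ref{uniform} into the root condition $\gamma := \tilde w^{-1}\beta \succ_\semi 0$. The decisive structural input---the semi-infinite analogue of the facts $U_\zeta \subset I \iff \zeta \succ_+ 0$ and $U_\zeta \subset I^- \iff \zeta \succ_- 0$ used implicitly in the positive and negative cases---is that the real root subgroups of $\tfin U(\mathcal{K})$ are exactly those indexed by the semi-infinite positive roots, i.e. $U_\zeta \subset \tfin U(\mathcal{K}) \iff \zeta \succ_\semi 0$. This holds because $U(\mathcal{K}) = U(\C((t)))$ is generated by the subgroups $U_{\alpha'}(c\, t^m)$ with $\alpha' \in \mathring\Phi^+$, $c \in \C$, $m \in \Z$, and such an element corresponds to the affine real root $\alpha' + m\delta$, which is semi-infinite positive precisely because $\alpha' \in \mathring\Phi^+$ (the power $m$ being unconstrained). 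In contrast to the opposite-order case, the inverted root $\gamma = \tilde w^{-1}\beta$ is itself semi-infinite positive, so it is $U_\gamma$ (rather than $U_{-\gamma}$) that gets absorbed into the stabilizer, making the bookkeeping slightly more direct.

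With these in place, the computation would read, for $a \in \C^*$, as follows. Using the $SL_2$ relation exhibiting $U_\beta(a)U_{-\beta}(-a^{-1})U_\beta(a)\beta^\vee(a)$ as a representative of $s_\beta$, and absorbing the leftmost $U_\beta(a) \in I$ (legitimate since $\beta \succ_+ 0$) into $I$, one obtains
\[
I s_\beta \tilde w\, \tfin U(\mathcal{K}) = I\, U_{-\beta}(-a^{-1})\, U_\beta(a)\, \beta^\vee(a)\, \tilde w\, \tfin U(\mathcal{K}).
\]
I would then move $\beta^\vee(a)$ to the right across $\tilde w$: it acts through its finite part $\alpha^\vee(a)$ (the central and degree contributions being trivial on the flag variety), so $\beta^\vee(a)\tilde w = \tilde w\, (\tilde w^{-1}\beta^\vee)(a)$ with $(\tilde w^{-1}\beta^\vee)(a) \in \tfin \subset \tfin U(\mathcal{K})$, which is absorbed on the right. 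Finally, writing $U_\beta(a)\tilde w = \tilde w\, U_\gamma(\pm a)$ and using $U_\gamma \subset \tfin U(\mathcal{K})$ from the previous paragraph, that factor too is absorbed, leaving
\[
I s_\beta \tilde w\, \tfin U(\mathcal{K}) = I\, U_{-\beta}(-a^{-1})\, \tilde w\, \tfin U(\mathcal{K})
\]
for every $a \in \C^*$. Letting $a \to \infty$ sends $U_{-\beta}(-a^{-1}) \to e$, so $\tilde w\, \tfin U(\mathcal{K})$ lies in the closure of $I s_\beta \tilde w\, \tfin U(\mathcal{K})$, as required.

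I expect the only genuine obstacle to be the structural identification of the real root subgroups of $\tfin U(\mathcal{K})$ with $\Phi^+_\semi$, together with its compatibility (through Lemma \ref{uniform}) with the semi-infinite order; once that is recorded, the remaining manipulations are formally identical to the negative-order case.
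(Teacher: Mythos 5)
Your argument is correct and is essentially the paper's own proof: both hinge on the two absorptions $U_\beta(a)\subset I$ (from $\beta\succ_+0$) and $U_{\tilde w^{-1}\beta}\subset \tfin U(\mathcal{K})$ (from $\tilde w^{-1}\beta\succ_{\semi}0$ via Lemma \ref{uniform}), leaving the factor $U_{-\beta}(-a^{-1})\to e$ as $a\to\infty$. The paper merely packages the same $SL_2$ degeneration differently, first reducing to showing $e$ is a limit point of $w^{-1}Iw\,s_{w^{-1}\beta}\,\tfin U(\mathcal{K})$ and then writing out the $2\times 2$ matrix identity in the root subgroup of $w^{-1}\alpha$, whereas you perform the identical manipulation in place, mirroring Lemma \ref{chopin}.
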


on Schubert varieties in $G(\mathcal{K})/\tfin U(\mathcal{K})$.

\begin{proof}
It suffices to check that $\tilde w$ can be obtained as a limit point of the set $Is_\beta \tilde w 
\tfin
U(\mathcal{K})$. Write $\tilde w = w \gt{\xi}$, with $w\in \mathring W$ and $\xi \in \mathring Q^\vee$. By multiplying on the right by $\gt{-\xi}$, it is equivalent to verify that $w$ is a limit point of $Is_\beta w 
\tfin
U(\mathcal{K})$. Finally, it is equivalent to verify that $e$ is a limit point of 
$$
w^{-1}Iw s_{w^{-1}\beta} 
\tfin
U(\mathcal{K}),
$$
and that is ultimately what we shall do. 

Now $\beta = \alpha + k \delta$, where either $k=0$ and $\alpha\succ_+ 0$ or $k>0$. So $w^{-1}\beta = w^{-1}\alpha + k\delta$, and 
$$
s_{w^{-1}\beta} = s_{w^{-1}\alpha} \bt{kw^{-1}\alpha^\vee} \mapsto s_{w^{-1}\alpha} \gt{-kw^{-1}\alpha^\vee}
$$
under the identification as an element of $G(\mathcal{K})/\tfin$. Thus we will show that $e$ is a limit point of 
$$
w^{-1}Iw s_{w^{-1}\alpha} \gt{-kw^{-1} \alpha^\vee} 
\tfin
U(\mathcal{K}). 
$$

Let $a\in \C^*$ be arbitrary. Consider the element $U_\alpha(at^k)$, which belongs to $I$ due to the assumptions on $\alpha,k$. We claim that 
$$
wU_\alpha(a t^k) w^{-1} s_{w^{-1}\alpha} \gt{-kw^{-1}\alpha^\vee} a^{-w^{-1}\alpha^\vee} U_{w^{-1}\alpha^\vee} (a t^k)
$$
has limit $e$ as $a\to \infty$. This product exists entirely in the root embedding of $SL_2$ or $PGL_2$ determined by the positive root $w^{-1}\alpha$. In $SL_2$, the calculation is straightforward:

$$
\left[
\begin{array}{rr}
1 & a t^k \\ 
0 & 1
\end{array}
\right]
\left[
\begin{array}{rr}
0 & -1 \\ 
1 & 0
\end{array}
\right]
\left[
\begin{array}{lr}
t^{-k} & 0 \\ 
0 & t^k
\end{array}
\right]
\left[
\begin{array}{lr}
a^{-1} & 0 \\ 
0 & a
\end{array}
\right]
\left[
\begin{array}{rr}
1 & a t^k \\ 
0 & 1
\end{array}
\right]=
\left[
\begin{array}{cr}
1 & 0 \\ 
a^{-1} t^{-k} & 1
\end{array}
\right].
$$
\end{proof}

\begin{remark}\label{choice-of-twist}
Correlating $s_0$ with the fixed point $s_\theta t^{\theta^\vee}$ sets up the geometry so that ${\bf X}(s_0)$ has dimension $1$ and ${\bf Q}(s_0)\subset {\bf Q}(e)$ with ``relative dimension'' $1$. At this point, in the  $\mathring{W} \ltimes \mathring{Q}^\vee$ realization of $W$ one could either choose:
\begin{enumerate}

\item $s_0 \sim s_\theta {\bf t}_{\theta^\vee}$ and $l_{\frac{\infty}{2}}(w {\bf t}_\xi) = l(w) - \langle 2\rho, \xi\rangle$ or

\item  $s_0 \sim s_\theta {\bf t}_{-\theta^\vee}$ and $l_{\frac{\infty}{2}}(w {\bf t}_\xi) = l(w) + \langle 2\rho, \xi\rangle$.

\end{enumerate}
We choose the second so that the length function matches the literature more closely. 
\end{remark}

\section{Reminder on GIT and inequalities} \label{GITsection}

The presence of a nontrivial weight space $V_\lambda^w(\mu)\ne 0$ is 
equivalent to the existence of $\tkm$-invariant global sections of a line 
bundle over ${\bf X}(w)$.  First, let us recall some of the main features from geometric invariant theory, which addresses existence of invariant global sections. 

\subsection{GIT}
Let $X$ be a projective, normal $S$-variety, where $S\simeq \C^*\times \cdots \times \C^*$ is an algebraic torus. Let $\mathbb{L}$ be an $S$-linearized line bundle on $X$. 

\begin{definition}
A point $x\in X$ is \emph{semistable} with respect to $\mathbb{L}$ if for some $n>0$, there is a section $\sigma \in H^0(X,\mathbb{L}^{\otimes n})^S$ such that $\sigma(x)\ne 0$. The set of all semistable points of $X$ is denoted $X^{ss}(\mathbb{L})$, or just $X^{ss}$ if the reference to $\mathbb{L}$ is clear, and any point of $X$ which is not semistable is \emph{unstable}. 
\end{definition}

Given $x\in X$ and a one-parameter subgroup (OPS) $\eta: \C^*\to S$, the stability of $\mathbb{L}$ at $x$ can be probed using $\eta$. Since $X$ is projective, the limit point $x_0=\lim_{t\to 0} \eta(t).x$ exists and is invariant under the action of $\eta$. The fibre $\mathbb{L}_{x_0}$ carries a $\C^*$-action via $\eta$: 
$$
\eta(t).z = t^rz
$$
for some integer $r$. Moreover, if there exists an invariant section $\sigma$ such that $\sigma(x)\ne 0$, then one can show that $r\ge0$. 

\begin{definition}
In the above setting, we define $\mu^{\mathbb{L}}(x,\eta):=-r$. 
\end{definition}

By this definition, a given $x\in X$ being semistable implies $\mu^{\mathbb{L}}(x,\eta)\le0$ for any OPS $\eta$. The Hilbert-Mumford criterion \cite{GIT} asserts that the converse is true: 
$$
\text{$x$ is semistable w.r.t. $\mathbb{L}\iff \mu^{\mathbb{L}}(x,\eta)\le0$ for every OPS $\eta$.}
$$

Hesselink \cite{H} showed that $X\setminus X^{ss}$ has a stratification in which the (unstable) points of each stratum have a common OPS witnessing the instability. In particular, if $X$ is irreducible and has no semistable points, there is a unique open dense stratum $U\subseteq X$ and an OPS $\eta$ such that 
$$
x\in U \implies \mu^{\mathbb{L}}(x,\eta)>0. 
$$

\subsection{Semistability on ${\bf X}(w)$} As recalled in Proposition \ref{affine-borel-weil} above, the sections of line bundles on ${\bf X}(w)$ provide a  realization of the Demazure modules: 
$$
H^0({\bf X}(w), L_w(\lambda))^* \simeq V_\lambda^w. 
$$
We are interested in the weight spaces of $V_\lambda^w$ with respect to the torus $\tkm= \tfin \times \C^*(d)\times \C^*(K)$. 
As in \cite[Lemma 5.6]{BJK}, a rational weight $\mu$ belongs to $P_\lambda^w$ if and only if 
$$
H^0({\bf X}(w), (\C_\mu \otimes L_w(\lambda))^{\otimes n})^{\tkm}\ne0
$$
for some integer $n\ge 1$. Here $\C^*(K)$ acts trivially on $G(\mathcal{K})/I$, and by a scalar on the fibres.

Given that $\langle \lambda, K\rangle = \langle \mu, K\rangle$, the existence of a $\tkm$-semistable locus for the line bundle $\mathbb{L} = \C_\mu \otimes L_w(\lambda)$ precisely controls whether $\mu \in P_\lambda^w$. The numerical criteria for semistability require us to calculate 
$$
\mu^{\mathbb{L}}(x,\eta)
$$
as a function of $x\in {\bf X}(w)$ and $\eta:\C^*\to \tkm$, as we now make explicit.

\subsection{Three families of cocharacters}

The affine Weyl group $W$ acts on the cocharacter lattice $X_*(\tkm)$; however, unlike in the classical setting, there is more than one fundamental domain for this action. In fact, there are infinitely many fixed points given by the multiples of $K$. Even in $X_*(\tkm)/(K)$ there are three fundamental domains. We will use $\delta$ to distinguish among these, and in each case define subgroups $M(\eta)\subseteq P(\eta)\subseteq G(\mathcal{K})$ that extend the roles of Levi and parabolic subgroups. 

\begin{definition}A cocharacter $\eta:\C^*\to \tkm$ is called 
\begin{enumerate}
\item  \emph{positive} if $\langle \delta, \eta\rangle >0$. In this case, one may find $v\in W$ such that $v\eta$ is dominant. 

We define $P(\eta)$ to be the subgroup generated by $v^{-1}Iv$ and the affine root subgroups $U_\alpha$ such that $\langle \alpha,\eta\rangle = 0$. We define $M(\eta)$ to be the subgroup generated by these root subgroups $U_\alpha$.

\item \emph{negative} if $\langle \delta, \eta \rangle <0$. In this case, one may find $v\in W$ such that $v\eta$ is antidominant. 

We associate to $\eta$ the subgroup $P(\eta)$ which is generated by $v^{-1}I^-v$ 

and the affine root subgroups $U_\alpha$ such that $\langle \alpha, \eta \rangle = 0$. 
We define $M(\eta)$ to be the subgroup generated by these root subgroups $U_\alpha$.

\item \emph{level-zero} if $\langle \delta, \eta \rangle = 0$. In this case, one may find $v\in \mathring W$ such that $\langle \alpha_i, v\eta \rangle \ge 0$ for all $i = 1,\hdots, r$. That is, $v \eta$ is a finite dominant coweight. 

We associate to $\eta$ the subgroup $P(\eta)$ generated by $v^{-1}\left(\tfin U(\mathcal{K})\right)v$ and the (real) affine root subgroups $U_\alpha$ such that $\langle \alpha, \eta \rangle = 0$. 
We set $M(\eta)$ to be the subgroup generated by the root subgroups $U_\alpha(\mathcal{K})$ where $\alpha$ is a finite root such that $\langle \alpha,\eta \rangle = 0$. 

\end{enumerate}
Note that the property of being positive, negative, or level-zero is unchanged under the action $\eta\mapsto v\eta$ for $v\in W$. 
\end{definition}

Using the decompositions of Section \ref{geom-orders}, taking inverses, and using that $P(\eta)$ contains (a Weyl conjugate of) either $I$, $I^-$, or $TU(\mathcal{K})$, we obtain the following common decomposition of $\op{Fl}_G$:

\begin{lemma}\label{adequate}
For $\eta$ of any type (positive, negative, level-zero), $\op{Fl}_G$ breaks up as a disjoint union (indexed over an appropriately chosen subset of $W$)
$$
\op{Fl}_G = \bigsqcup_q P(\eta) q I/I.
$$
\end{lemma}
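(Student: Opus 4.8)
The plan is to observe that, for \emph{any} subgroup $H \subseteq G(\mathcal{K})$, the $H$-orbits on $\op{Fl}_G$ automatically partition it; the only real content of the lemma is that the orbits of the specific group $P(\eta)$ can be represented by points $qI/I$ with $q$ ranging over a subset of $W$. I would extract this from the three orbit decompositions of $\op{Fl}_G = G(\mathcal{K})/I$ underlying Section \ref{geom-orders}: the standard Bruhat decomposition $\op{Fl}_G = \bigsqcup_{w\in W} IwI/I$ recorded above, together with the Birkhoff decomposition $\op{Fl}_G = \bigsqcup_{w\in W} I^- wI/I$ and the semi-infinite decomposition $\op{Fl}_G = \bigsqcup_{w\in W} \tfin U(\mathcal{K})\, wI/I$. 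The latter two I would obtain from the $I$-orbit stratifications of $G(\mathcal{K})/I^-$ and $G(\mathcal{K})/\tfin U(\mathcal{K})$ (whose closures are the ${\bf Y}(w)$ and ${\bf Q}(w)$ of Lemmas \ref{chopin} and \ref{liszt}) by applying the inverse anti-automorphism $g \mapsto g^{-1}$, which interchanges left and right orbits and sends $w$ to $w^{-1}$.

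The unifying feature is that in each case the Definition supplies an element $v$ together with a containment $v^{-1}Hv \subseteq P(\eta)$, where $H = I$ in the positive case, $H = I^-$ in the negative case, and $H = \tfin U(\mathcal{K})$ in the level-zero case (with $v \in \mathring W$). Left multiplication $L_{v^{-1}}\colon gI \mapsto v^{-1}gI$ is a bijection of $\op{Fl}_G$ carrying $H$-orbits to $(v^{-1}Hv)$-orbits, since $v^{-1}(HgI) = (v^{-1}Hv)(v^{-1}g)I$. Applying $L_{v^{-1}}$ to the relevant decomposition above and reindexing $u = v^{-1}w$ therefore yields $\op{Fl}_G = \bigsqcup_{u\in W}(v^{-1}Hv)\,uI/I$, a disjoint union of $(v^{-1}Hv)$-orbits, each containing the distinguished point $uI/I$ with $u \in W$.

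To conclude, since $v^{-1}Hv \subseteq P(\eta)$, every $P(\eta)$-orbit is a union of $(v^{-1}Hv)$-orbits and hence contains at least one point of the form $uI/I$ with $u \in W$. Choosing one such $u$ from each $P(\eta)$-orbit produces a subset of $W$ indexing the orbits, and since orbits are automatically disjoint we obtain $\op{Fl}_G = \bigsqcup_q P(\eta)qI/I$ exactly as claimed. The additional generators of $P(\eta)$ (the root subgroups $U_\alpha$ with $\langle \alpha,\eta\rangle = 0$) play no role beyond enlarging $P(\eta)$; they only coarsen the partition, which is precisely why passing from the fine $(v^{-1}Hv)$-decomposition to the $P(\eta)$-decomposition is harmless.

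The main obstacle I anticipate is not the orbit bookkeeping, which is formal, but cleanly establishing the Birkhoff and semi-infinite decompositions of $\op{Fl}_G$ with representatives indexed by $W$ --- in particular verifying that the inverse map correctly transports the $I$-orbit stratifications of $G(\mathcal{K})/I^-$ and $G(\mathcal{K})/\tfin U(\mathcal{K})$ into $I^-$- and $\tfin U(\mathcal{K})$-orbit stratifications of $G(\mathcal{K})/I$, and confirming that in the level-zero case the conjugate $v^{-1}\tfin U(\mathcal{K})v$ (for $v \in \mathring W$) is again of the form $\tfin U'(\mathcal{K})$ for the Borel determined by $v$, so that the translated decomposition is genuinely a partition into orbits of a subgroup of $P(\eta)$.
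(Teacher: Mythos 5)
Your proposal is correct and follows essentially the same route the paper takes: the paper's entire justification is the sentence preceding the lemma (``Using the decompositions of Section \ref{geom-orders}, taking inverses, and using that $P(\eta)$ contains (a Weyl conjugate of) either $I$, $I^-$, or $\tfin U(\mathcal{K})$\ldots''), and your argument is a careful expansion of exactly those three steps. The only remark worth making is that your final anticipated obstacle is a non-issue: you never need $v^{-1}\tfin U(\mathcal{K})v$ to again be of the form $\tfin U'(\mathcal{K})$, only that it is a subgroup contained in $P(\eta)$, which holds by the definition of $P(\eta)$.
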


Let $\widehat{I}$ denote the preimage of $I \subset LG$ in $\widehat{G}$. A more natural definition of affine Schubert cells (and their generalizations) would be $P(\eta)q \widehat{I}/\widehat{I}$. We have an isomorphism between $P(\eta)q \widehat{I}/\widehat{I}$ and $P(\eta)qI/I$, and the same holds for their closures, so for geometric questions we may (and do) work with the latter. But the former is more useful for constructing our line bundles on $\mathbf{X}(w)$ via the Borel associated bundle construction.

We write \[L_{\mathbf{X}(w)}(\mathbb{C}_{-\lambda}):= \overline{I w\widehat{I}} \times_{\widehat{I}} \mathbb{C}_{-\lambda}.\] By \cite{KumarBook} Lemma 8.1.4, this line bundle on $\mathbf{X}(w)$ coincides with the pullback of the previously described line bundle: $L_{\mathbf{X}(w)}(\mathbb{C}_{-\lambda})= L_w(\lambda)$.

Now let $x\in {\bf X}(w)$, $\eta:\C^* \to \tkm$, and $\mathbb{L} = \C_\mu\otimes L_w(\lambda)$ as above. We will calculate $\mu^{\mathbb{L}}(x,\eta)$, in the same manner as \cite{BeSj}*{\S 4.2} (see also \cite{BK}*{Lemma 14}) in the finite setting.  We can find some $q\in W$ such that $x\in P(\eta)q\widehat{I}/\widehat{I}$. Moreover, $x_0 = \lim_{t\to 0} \eta(t).x$ belongs to $M(\eta) q\widehat{I}/\widehat{I}$. Write $x_0 = \ell_0q\widehat{I}/\widehat{I}$. The fibre $\mathbb{L}_{x_0}$ can be identified as the collection of pairs modulo a relation:
$$
\mathbb{L}_{x_0} = \{
(\ell_0 q b,z): b\in \widehat I, z\in \C 
\}/(\ell_0 q b,z) \sim (\ell_0 q bb',  \lambda(t') z),
$$
where $t'$ is the part of $b'\in \widehat I$ belonging to $\tkm$.  

Recalling that the $\tkm$-linearization of $\mathbb{L}$ has been twisted by $\mu$, the action of $\eta(t)$ on this line is evidently by 
$$
\lambda^{-1}(q^{-1}\eta(t))\mu(\eta(t)) = t^{\langle -\lambda , q^{-1}\eta \rangle + \langle \mu, \eta \rangle}. 
$$

Therefore, keeping in mind the isomorphism $P(\eta)q\widehat{I}/\widehat{I} \simeq P(\eta)qI/I$, we have calculated the following. 
\begin{lemma}\label{bridge}
Suppose $x\in P(\eta) qI/I$, $\eta:\C^* \to \tkm$, and $\mathbb{L} = \C_\mu\otimes L_w(\lambda)$. Then 
$$
\mu^{\mathbb{L}}(x,\eta) = \langle \lambda, q^{-1} \eta \rangle  - \langle \mu, \eta \rangle.
$$
\end{lemma}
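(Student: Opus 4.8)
The plan is to unwind the definition of $\mu^{\mathbb{L}}(x,\eta)$ and reduce everything to a weight computation on a single fibre. Recall that $\mu^{\mathbb{L}}(x,\eta) = -r$, where $r$ is the integer such that $\eta(t)$ acts on the fibre $\mathbb{L}_{x_0}$ by $z\mapsto t^r z$, and $x_0 = \lim_{t\to 0}\eta(t).x$ is the $\eta$-fixed limit point (which exists since $\mathbf{X}(w)$ is projective). So the proof has two moving parts: first locate $x_0$, and then compute the $\eta$-weight of the fibre over it. I would work throughout in the $\widehat{G}/\widehat{I}$ picture, where, by the associated-bundle description $L_{\mathbf{X}(w)}(\C_{-\lambda}) = \overline{Iw\widehat{I}}\times_{\widehat{I}}\C_{-\lambda}$ together with the $\mu$-twist of the linearization, the fibre and the torus action are explicit; the isomorphism $P(\eta)q\widehat{I}/\widehat{I}\simeq P(\eta)qI/I$ lets me transport the conclusion back to the statement's setting.

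The key geometric step, and the main obstacle, is to show that $x_0\in M(\eta)q\widehat{I}/\widehat{I}$. Starting from $x\in P(\eta)q\widehat{I}/\widehat{I}$, this amounts to checking that the flow $\eta(t)$ contracts $P(\eta)$ onto $M(\eta)$ as $t\to 0$. This is exactly where the case analysis of the three cocharacter types enters: $P(\eta)$ is generated by a Weyl conjugate of $I$, $I^-$, or $\tfin U(\mathcal{K})$ (depending on whether $\eta$ is positive, negative, or level-zero) together with the real affine root subgroups $U_\alpha$ with $\langle\alpha,\eta\rangle = 0$. On the $\eta$-fixed root groups $\eta$ acts trivially, and on the remaining generating directions $\eta$ acts with strictly positive weight by the defining (dominance) property of $v\eta$ in each case, so those directions flow to the identity. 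Hence the limit exists inside $M(\eta)q\widehat{I}/\widehat{I}$, and I may write $x_0 = \ell_0 q\widehat{I}/\widehat{I}$ with $\ell_0\in M(\eta)$.

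Finally I would compute the fibre weight. The crucial observation is that $M(\eta)$ is generated by root subgroups $U_\alpha$ with $\langle\alpha,\eta\rangle=0$, so $\eta$ centralizes $\ell_0$ and $\eta(t)\ell_0 = \ell_0\eta(t)$. Taking a representative $(\ell_0 q b, z)$ of a point of $\mathbb{L}_{x_0}$ and pushing the action through, $\eta(t)\cdot(\ell_0 q b, z) = (\ell_0 q\,(q^{-1}\eta(t)q)\,b,\, z)$, where $q^{-1}\eta(t)q = (q^{-1}\eta)(t)\in\tkm\subset\widehat{I}$. Reabsorbing this $\widehat{I}$-factor through the defining relation of the associated bundle rescales $z$ by $\lambda^{-1}(q^{-1}\eta(t))$ (coming from $\C_{-\lambda}$), while the $\mu$-twist of the linearization contributes a further factor $\mu(\eta(t))$. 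Thus $\eta(t)$ acts on $\mathbb{L}_{x_0}$ by $\lambda^{-1}(q^{-1}\eta(t))\,\mu(\eta(t)) = t^{\langle -\lambda, q^{-1}\eta\rangle + \langle\mu,\eta\rangle}$, so that $r = \langle -\lambda, q^{-1}\eta\rangle + \langle\mu,\eta\rangle$ and therefore $\mu^{\mathbb{L}}(x,\eta) = -r = \langle\lambda, q^{-1}\eta\rangle - \langle\mu,\eta\rangle$, as claimed. Since this value does not depend on the choice of representative $(\ell_0 q b, z)$, the computation is well-posed.
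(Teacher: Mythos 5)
Your proposal is correct and follows essentially the same route as the paper: pass to the $\widehat{G}/\widehat{I}$ picture, locate $x_0=\lim_{t\to 0}\eta(t).x$ in $M(\eta)q\widehat{I}/\widehat{I}$, and read off the $\eta$-weight of the fibre of the associated bundle (with the $\mu$-twist) as $\lambda^{-1}(q^{-1}\eta(t))\mu(\eta(t))$. The only difference is that you spell out the contraction of $P(\eta)$ onto $M(\eta)$ under the flow, which the paper merely asserts.
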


\section{Inequalities for $P_{\lambda}^w$} \label{ineqdetermined}

We produce inequalities defining $P_{\lambda}^w$ in the following fashion. If $\mu \notin P_{\lambda}^w$, this corresponds with $\mathbb{L}=L_w(\lambda) \otimes \mathbb{C}_{-\mu}$ being an unstable bundle on $\mathbf{X}(w)$. Assume that this is the case. Then to each unstable point $x \in {\bf X}(w)$, we can associate the Kempf ``maximal destabilizing OPS" $\eta_x$. Moreover, by work of Hesselink, there will exist a dense open subset $U \subset \mathbf{X}(w)$ such that for all $x,y \in U$ we have $\eta_x=\eta_y$. We can then conjugate this Kempf maximal destabilizing OPS attached to $U$ to a ``standard" OPS $\eta$: $\eta_x=v\eta$. Then $P(\eta)$ will contain either $I$, $I^{-}$, or $\tfin U(\mathcal{K})$, and we can apply Lemmas \ref{adequate} and \ref{bridge}.

\begin{definition}
 We take $B(\eta)$ to be $I$ if $\eta$ is positive, $I^-$ if $\eta$ is negative, and $\tfin U(\mathcal{K})$ if $\eta$ is level-zero. 
 \end{definition}

In order to produce inequalities, we require the following result, which is a straightforward generalization of \cite{BJK}*{Lemma 5.8} to the current setting.

\begin{lemma}\label{some-dense}
	Fix an affine Schubert variety $\mathbf{X}(w)$, and let $\eta$ be a standard OPS. Then there exists $u \in W$ such that $vB(\eta)v^{-1}uI/I \cap {\bf X}(w)$ is dense inside $\mathbf{X}(w)$. 
\end{lemma}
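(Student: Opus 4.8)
\emph{Plan.} The strategy is to realize the desired dense subset as the intersection of ${\bf X}(w)$ with a single $vB(\eta)v^{-1}$-orbit, chosen so as to contain the generic point of the open Schubert cell $C(w) = IwI/I$. Since ${\bf X}(w) = \overline{C(w)}$ is irreducible, any subset of ${\bf X}(w)$ containing the generic point of $C(w)$ is automatically dense; so the whole problem reduces to (i) producing an orbit decomposition of $\op{Fl}_G$ under $vB(\eta)v^{-1}$ whose strata are indexed by the $T$-fixed points $uI/I$ with $u \in W$, and (ii) identifying the unique stratum meeting $C(w)$ generically. This mirrors the argument of \cite{BJK}*{Lemma 5.8}, now split across the three types of $\eta$.

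First I would record the relevant orbit decomposition of $\op{Fl}_G$ under $B(\eta)$ in each case, using the geometry of Section \ref{geom-orders}. For $\eta$ positive, $B(\eta)=I$ and this is the usual Bruhat stratification $\op{Fl}_G = \bigsqcup_{x\in W} IxI/I$. For $\eta$ negative, $B(\eta)=I^-$ and this is the Birkhoff (opposite) decomposition $\op{Fl}_G = \bigsqcup_{x\in W} I^- x I/I$. For $\eta$ level-zero, $B(\eta)=\tfin U(\mathcal{K})$ and this is the semi-infinite orbit decomposition; passing through $g\mapsto g^{-1}$ identifies it with the $I$-orbit decomposition of $G(\mathcal{K})/\tfin U(\mathcal{K})$ underlying the varieties ${\bf Q}(\tilde w)$ of Lemma \ref{liszt}, so its strata are again indexed by $W$, each orbit containing a $T$-fixed point $xI/I$. (This is the $B(\eta)$-refinement of the coarser $P(\eta)$-decomposition of Lemma \ref{adequate}.) In all three cases each stratum is locally closed and contains exactly one $T$-fixed point $xI/I$ with $x\in W$. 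Conjugating the decomposition by $v$, which acts as an automorphism of $\op{Fl}_G$, and reindexing by $u=vx$, yields the disjoint covering $\op{Fl}_G = \bigsqcup_{u\in W} vB(\eta)v^{-1}uI/I$, whose strata are precisely the $vB(\eta)v^{-1}$-orbits through the $T$-fixed points $uI/I$.

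With this in hand the conclusion is immediate. The generic point $\xi$ of the irreducible cell $C(w)$ -- equivalently of ${\bf X}(w)$ -- lies in exactly one stratum $vB(\eta)v^{-1}uI/I$, and this defines the required $u$. Then $\xi \in vB(\eta)v^{-1}uI/I \cap {\bf X}(w)$, and since $\overline{\{\xi\}} = {\bf X}(w)$ by irreducibility of the Schubert variety, the intersection $vB(\eta)v^{-1}uI/I \cap {\bf X}(w)$ is dense in ${\bf X}(w)$.

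The step I expect to require the most care is the level-zero case of the decomposition (and, to a lesser degree, the negative case), for there the strata $vB(\eta)v^{-1}uI/I$ are infinite-dimensional (``thick'' or ``semi-infinite'') inside the ind-scheme $\op{Fl}_G$. One must verify that this decomposition restricts to an honest decomposition of the finite-dimensional, irreducible projective variety ${\bf X}(w)$ into locally closed pieces, so that the generic point of ${\bf X}(w)$ genuinely lies in a single stratum and the argument above applies; this is exactly where the closure relations of Section \ref{geom-orders} (Lemmas \ref{chopin} and \ref{liszt}) together with the finite-dimensionality of ${\bf X}(w)$ are needed. I would also check that each orbit indeed contains a $T$-fixed point $xI/I$ with $x\in W$, so that the representative $u$ can be taken in $W$ as the statement requires.
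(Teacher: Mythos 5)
Your argument is correct and is essentially the one the paper intends: the paper states Lemma \ref{some-dense} without proof, deferring to it as a ``straightforward generalization'' of \cite{BJK}*{Lemma 5.8}, and your stratification-plus-irreducibility argument (decompose $\op{Fl}_G$ into $vB(\eta)v^{-1}$-orbits through the $T$-fixed points, note that the generic point of the irreducible variety ${\bf X}(w)$ lies in exactly one stratum, and conclude density) is precisely that generalization. The technical point you flag at the end --- that in the negative and level-zero cases the thick and semi-infinite strata meet the finite-dimensional ${\bf X}(w)$ in locally closed (hence constructible) subsets, so that the generic-point argument genuinely applies --- is indeed the only input beyond the finite-type case, and it is supplied by the closure relations of Section \ref{geom-orders} together with the standard Birkhoff and Iwasawa decompositions.
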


The question becomes how to characterize when this dense intersection occurs. It will be helpful to have a summary of how Schubert cells (in each of the three flag varieties) behave when twisted by simple reflections. 

\begin{lemma}\label{bruhat-does-it-again}
Let $w\in W$ and $s_i$ a simple reflection. Let $\eta:\C^*\to \tkm$ be either dominant, 
antidominant, or finite dominant. Then
$$
s_iIwB(\eta)\subseteq 
\left\{
\begin{array}{lr}
 Is_iwB(\eta), & w \le_{\diamond} s_iw\\
 Is_iwB(\eta) \sqcup IwB(\eta), & s_iw \le_{\diamond}w
\end{array}
\right.
$$
\end{lemma}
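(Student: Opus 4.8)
The plan is to reduce the containment to a single rank-one ($SL_2$ or $PGL_2$) computation in the simple root subgroup $U_{\alpha_i}$, exactly in the spirit of the explicit matrix calculations in the proofs of Lemmas~\ref{chopin} and~\ref{liszt}, and then to read off the two cases from Lemma~\ref{uniform}, which is what makes the three choices of $B(\eta)$ collapse into one uniform argument. First I would isolate the simple root subgroup: writing $\alpha_i$ for the simple affine root attached to $s_i$, let $I''$ be the subgroup of $I$ generated by $\tfin$ together with all $U_\beta$ for $\beta \succ_+ 0$, $\beta \ne \alpha_i$. Since $\alpha_i$ is simple, $\Phi^+_{re}\setminus\{\alpha_i\}$ is closed under addition and is permuted by $s_i$, so $I''$ is a group with $s_i I'' s_i^{-1} = I''$, and there is an Iwahori factorization $I = I''\,U_{\alpha_i}$. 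Hence
\[
s_i I w B(\eta) = s_i I'' U_{\alpha_i} w B(\eta) = I'' s_i U_{\alpha_i} w B(\eta) \subseteq I\, s_i\, U_{\alpha_i}\, w\, B(\eta),
\]
and it remains to analyze $s_i U_{\alpha_i}(a) w B(\eta)$ for $a \in \C$.

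The element $a=0$ contributes $I s_i w B(\eta)$ in every case. For $a \ne 0$, set $\gamma := w^{-1}\alpha_i$, a real affine root, and recall from Lemma~\ref{uniform} that $w \le_\diamond s_i w$ iff $\gamma \succ_\diamond 0$, while $s_i w \le_\diamond w$ iff $-\gamma \succ_\diamond 0$ (the two alternatives being exhaustive by Lemma~\ref{lengthpm1}); by the definition of $B(\eta)$, the full root subgroup $U_\gamma$ lies in $B(\eta)$ precisely when $\gamma \succ_\diamond 0$. In the case $w \le_\diamond s_i w$ I would simply conjugate $U_{\alpha_i}(a)$ to the right of $w$, writing $U_{\alpha_i}(a) w = w U_\gamma(\ast)$; since $\gamma \succ_\diamond 0$ gives $U_\gamma(\ast)\in B(\eta)$, it is absorbed and $s_i U_{\alpha_i}(a) w B(\eta) = s_i w B(\eta)$. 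Thus the whole of $s_i I w B(\eta)$ lands in $I s_i w B(\eta)$, as claimed.

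In the case $s_i w \le_\diamond w$, so $-\gamma \succ_\diamond 0$, I would instead move $U_{\alpha_i}(a)$ to the left of $s_i$: conjugating gives $\dot s_i U_{\alpha_i}(a) = U_{-\alpha_i}(\pm a)\dot s_i$, and then I apply the rank-one Bruhat relation $U_{-\alpha_i}(b) = U_{\alpha_i}(b^{-1})\,\dot s_i^{-1}\, \alpha_i^\vee(-b)\, U_{\alpha_i}(b^{-1})$ (valid for $b \ne 0$), which is the same explicit $SL_2$/$PGL_2$ identity underlying Lemmas~\ref{chopin} and~\ref{liszt}. Collecting the torus factor and commuting the trailing $U_{\alpha_i}$ back through $\dot s_i$, the product takes the form $U_{\alpha_i}(\ast)\, h'\, U_{-\alpha_i}(\ast)\, w\, B(\eta)$ with $h' \in \tfin$; since $\alpha_i \succ_+ 0$ the leading factor $U_{\alpha_i}(\ast)h'$ lies in $I$, while $U_{-\alpha_i}(\ast) w = w\,U_{-\gamma}(\ast)$ with $U_{-\gamma}\subseteq B(\eta)$ is absorbed. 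Hence $s_i U_{\alpha_i}(a) w B(\eta)\subseteq I w B(\eta)$ for $a \ne 0$, and together with the $a=0$ contribution we obtain containment in $I s_i w B(\eta)\sqcup I w B(\eta)$.

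Finally, the union is disjoint because $s_i w \ne w$ index distinct double cosets in the relevant Bruhat-type decomposition recorded in Lemma~\ref{adequate} (thin, thick, or semi-infinite according to $B(\eta)$). I expect the main obstacle to be the bookkeeping in this second case: checking that the rank-one relation is available uniformly for $B(\eta) = I, I^-, \tfin U(\mathcal{K})$, and that, after the $t$-power shifts introduced by the translation part of $w$, the resulting $U_{\alpha_i}$-factor genuinely lies in $I$ and the $U_{-\gamma}$-factor genuinely lies in $B(\eta)$. This is precisely where Lemma~\ref{uniform} does the essential work, converting all three order relations $\le_\diamond$ into the single positivity condition $\gamma \succ_\diamond 0$ that governs the absorption.
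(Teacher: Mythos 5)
Your proposal is correct and follows essentially the same route as the paper's proof: both reduce to the single $U_{\alpha_i}$-factor via $s_iI \subseteq I s_i U_{\alpha_i}$, absorb $U_{w^{-1}\alpha_i}$ into $B(\eta)$ when $w \le_\diamond s_i w$ using Lemma \ref{uniform}, and otherwise (for nonzero parameter) apply the rank-one identity $s_i U_{\alpha_i}(c) \equiv U_{\alpha_i}(-c^{-1})U_{-\alpha_i}(c) \pmod{\tfin}$ so the $U_{-\alpha_i}$-factor is carried past $w$ into $B(\eta)$ and the coset lands in $IwB(\eta)$. The only cosmetic difference is that you justify the first step by an explicit factorization $I = I''U_{\alpha_i}$ with $s_iI''s_i^{-1}=I''$, where the paper simply asserts $s_i b = b' s_i U_{\alpha_i}(c)$.
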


\begin{proof}
For $b\in I$, we have 
$$
s_ib = b'  s_i U_{\alpha_i}(c)
$$
for some $b'\in I$ and $c\in \C$. 

In case $w\le_{\diamond} s_iw$, we know that $w^{-1} \alpha_i \succ_\diamond 0$; hence 
$$
s_ib w B(\eta) = b' s_i U_{\alpha_i}(c) w B(\eta) = b' s_i w U_{w^{-1}\alpha_i} (c) B(\eta) = b' s_i w B(\eta) \in I s_i w B(\eta). 
$$

Otherwise, $s_i w \le_{\diamond} w$ and $-\left(w^{-1}\alpha_i\right) \succ_\diamond 0$. If $c=0$, then we still have  
$$
s_i b w B(\eta) = b' s_i w B(\eta) \in I s_i w B(\eta).
$$
But if $c\ne 0$, we observe that 

$$
s_i U_{\alpha_i}(c) = U_{\alpha_i} (-c^{-1}) U_{-\alpha_i}(c)  \pmod {\tfin}
$$
from the $SL_2$ theory. Thus 
\begin{align*}
s_ibwB(\eta) = b' U_{\alpha_i}(-c^{-1}) U_{-\alpha_i}(c)w B(\eta) &= b' U_{\alpha_i}(-c^{-1}) w U_{-w\alpha_i}(c)B(\eta)  \\ 
& =b' U_{\alpha_i}(-c^{-1}) w B(\eta) \in I w B(\eta).  \qedhere
\end{align*}
\end{proof}

Now we can relate dense intersections with closure relations and Demazure products for the various orders $\le_\diamond$. 

\begin{lemma}\label{ankylosaurus}
	The following are equivalent:
	\begin{enumerate}
		\item $\mathbf{X}(w) \subseteq \overline{v B(\eta) v^{-1}uI/I}$
		\item For $B(\eta)$ of type $\diamond$ we have  $w^{-1} \ast_{\diamond} v \leq_{\diamond} u^{-1}v$.
	\end{enumerate}
	Moreover, if  $v B(\eta)v^{-1}uI/I \cap \mathbf{X}(w)$ is dense in $\mathbf{X}(w)$, then the equivalent statements above are true. 
\end{lemma}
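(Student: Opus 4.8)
The plan is to strip away the conjugation by $v$ using the inverse map, and then to recognize the Demazure product $w^{-1}\ast_\diamond v$ as the top cell of a ``swept'' double coset, at which point the three closure lemmas finish the job. Throughout I write $\mathbf{X}_\diamond(x):=\overline{IxB(\eta)}$ for the generalized Schubert variety attached to $\le_\diamond$ (so $\mathbf{X}_\diamond(x)$ is $\mathbf{X}(x)$, $\mathbf{Y}(x)$, or $\mathbf{Q}(x)$ according to the type of $\eta$), whose closure relations are $\mathbf{X}_\diamond(x)\subseteq\mathbf{X}_\diamond(y)\iff x\le_\diamond y$ by Lemmas \ref{chopin}, \ref{liszt}, and their positive analogue.

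The \emph{moreover} clause is immediate: if $Z:=vB(\eta)v^{-1}uI/I\cap\mathbf{X}(w)$ is dense in $\mathbf{X}(w)$, then since $\mathbf{X}(w)$ is closed we get $\mathbf{X}(w)=\overline{Z}\subseteq\overline{vB(\eta)v^{-1}uI/I}$, which is $(1)$. For the equivalence $(1)\iff(2)$, I would first reduce to a single cell: as $\mathbf{X}(w)=\overline{IwI/I}$ and $\overline{vB(\eta)v^{-1}uI/I}$ is closed, statement $(1)$ is equivalent to $IwI/I\subseteq\overline{vB(\eta)v^{-1}uI/I}$. Next I apply the inverse map $\iota\colon gI\mapsto Ig^{-1}$, a homeomorphism $G(\mathcal{K})/I\xrightarrow{\sim}I\backslash G(\mathcal{K})$ preserving closures. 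Using $I^{-1}=I$ and $B(\eta)^{-1}=B(\eta)$ one computes $\iota(IwI/I)=Iw^{-1}I$ and $\iota(vB(\eta)v^{-1}uI/I)=Iu^{-1}vB(\eta)v^{-1}$, and multiplying on the right by $v$ turns $(1)$ into
\[
\overline{Iw^{-1}Iv}\subseteq\overline{I(u^{-1}v)B(\eta)}\qquad\text{in }I\backslash G(\mathcal{K}).
\]
Since the right-hand side is stable under right multiplication by $B(\eta)$, this is equivalent to $\overline{Iw^{-1}IvB(\eta)}\subseteq\overline{I(u^{-1}v)B(\eta)}$, so both sides are now unions of $I$--$B(\eta)$ double cosets, with $\overline{I(u^{-1}v)B(\eta)}=\bigsqcup_{x\le_\diamond u^{-1}v}IxB(\eta)$.

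The crux is to identify the top cell of the left-hand side with the Demazure product. Writing $w^{-1}=s_{j_1}\cdots s_{j_m}$ reduced and $P_j=I\sqcup Is_jI$, one has $\overline{Iw^{-1}IvB(\eta)}=\overline{P_{j_1}\cdots P_{j_m}\,vB(\eta)}$, and I would iterate the sweeping content of Lemma \ref{bruhat-does-it-again}, which says exactly that $P_j\cdot\mathbf{X}_\diamond(x)=\mathbf{X}_\diamond(s_j\ast_\diamond x)$ (the cell moves up to $s_jx$ when $x<_\diamond s_jx$ and stays otherwise, i.e.\ the top becomes $\max_{\le_\diamond}\{x,s_jx\}$). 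By Definition \ref{Demproddef} the iterated sweep is $\mathbf{X}_\diamond\!\big(s_{j_1}\ast_\diamond(\cdots(s_{j_m}\ast_\diamond v))\big)=\mathbf{X}_\diamond(w^{-1}\ast_\diamond v)$, the reduced-word independence being guaranteed by Proposition \ref{Demprodmax}. Hence the containment above reads $\mathbf{X}_\diamond(w^{-1}\ast_\diamond v)\subseteq\mathbf{X}_\diamond(u^{-1}v)$, which by the closure relations is precisely $w^{-1}\ast_\diamond v\le_\diamond u^{-1}v$, namely statement $(2)$.

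The main obstacle I anticipate is bookkeeping rather than conceptual. One must set up the inverse-map dictionary with care so that the $v$-conjugated $B(\eta)$-orbit in $G(\mathcal{K})/I$ is matched, after saturation by $B(\eta)$, with the varieties $\mathbf{X}_\diamond$ living in the three distinct flag varieties $G(\mathcal{K})/I$, $G(\mathcal{K})/I^-$, and $G(\mathcal{K})/\tfin U(\mathcal{K})$, and so that the left/right conventions---most delicately for the semi-infinite order, cf.\ Remark \ref{choice-of-twist}---produce $u^{-1}v$ and $w^{-1}\ast_\diamond v$ with the correct inverses and not their reverses; note in particular that the reduction only uses inverse-invariance of the \emph{standard} Bruhat order and never of $\le_{\semi}$. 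A secondary technical point is to check that the relevant closures, multiplications, and sweeps behave as in the finite-dimensional setting inside the ind-scheme $I\backslash G(\mathcal{K})$, which is exactly where the diamond lemma---through Lemma \ref{bruhat-does-it-again} and the well-definedness of $\ast_\diamond$---does the real work.
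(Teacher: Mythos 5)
Your proof is correct and follows essentially the same route as the paper's: both pass to $G(\mathcal{K})/B(\eta)$ via the inverse map, use Lemma \ref{bruhat-does-it-again} to sweep $\overline{Iw^{-1}IvB(\eta)}$ up to the cell of $w^{-1}\ast_\diamond v$ (with well-definedness supplied by Proposition \ref{Demprodmax}), and conclude with the closure lemmas of Section \ref{geom-orders}. The only cosmetic difference is that for (1)$\Rightarrow$(2) the paper applies the max-characterization of Proposition \ref{Demprodmax} to the points $q^{-1}v$ for $q\le w$ rather than your single identification $\overline{Iw^{-1}IvB(\eta)}=\overline{I(w^{-1}\ast_\diamond v)B(\eta)}$, but the ingredients are identical.
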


\begin{proof}

	(1) $\implies$ (2):  Let $q\le w$ in the standard Bruhat order. Since ${\bf X}(q)\subseteq \overline{v B(\eta) v^{-1}uI/I}$, we see that in particular $v^{-1}q \in \overline{B(\eta) v^{-1}uI/I}$. This is equivalent to $q^{-1}v\in \overline{I u^{-1}v B(\eta)}$ in $G(\mathcal{K})/B(\eta)$, so by the relevant Lemma in Section \ref{geom-orders}, $q^{-1}v \le_{\diamond} u^{-1}v$. Since this is true for arbitrary $q\le w$, we obtain 
	$$
	w^{-1}*_{\diamond}v \le_{\diamond} u^{-1}v
	$$
	by Proposition \ref{Demprodmax}.

	(2) $\implies$ (1): We will show that $v^{-1}{\bf X}(w)\subseteq \overline{B(\eta) (w^{-1}*_{\diamond}v)^{-1}I/I}$. It is equivalent to show $B(\eta) v^{-1}IwI/I\subseteq \overline{B(\eta) (w^{-1}*_{\diamond}v)^{-1}I/I}$, or even 
	\begin{align}\label{geom-dem-prod}
	w^{-1}IvB(\eta) \subseteq \overline{I(w^{-1}*_{\diamond}v)B(\eta)},
	\end{align}
	which is the direction we will take, inducting on $\ell(w)$. 
	
	For $\ell(w) = 0$, the statement is obvious. Now suppose (\ref{geom-dem-prod}) holds whenever $\ell(w) = k$, and consider an arbitrary $w\in W$ with $\ell(w) = k+1$. Decompose $w = w_1s$, where $s\in S$ and $\ell(w_1) = k$. By hypothesis, 
	$$
	w^{-1}Iv B(\eta) = sw_1^{-1}IvB(\eta) \subseteq s \overline{I (w_1^{-1}*_{\diamond} v)B(\eta)}. 
	$$
	By Lemma \ref{bruhat-does-it-again}, 
	$$
	sI (w_1^{-1}*_{\diamond} v)B(\eta) \subseteq \overline{I (s*(w_1^{-1}*_{\diamond} v))B(\eta)} = \overline{I(w^{-1}*_{\diamond} v) B(\eta)}.
	$$
	The result follows, establishing (1) $\iff$ (2). 
	
	Finally, suppose $v B(\eta)v^{-1}uI/I \cap \mathbf{X}(w)$ is dense in $\mathbf{X}(w)$. Then 
	\begin{align*}
	\mathbf{X}(w) = \overline{v B(\eta)v^{-1}uI/I \cap \mathbf{X}(w)} \subseteq \overline{v B(\eta)v^{-1}uI/I}.  & \qedhere
	\end{align*}
\end{proof}	

We are now in a position to prove our first main result. 

\begin{theorem}\label{first-pass}
Let $\mu$ be a rational character. Then $\mu\in P_\lambda^w$ if and only if $\langle \lambda,K\rangle = \langle \mu,K\rangle$ and

\begin{enumerate}
\item for every fundamental coweight $\check\Lambda_i$ and $v\in W$, the inequality 
$$
\langle \lambda, (w^{-1}*v) \check\Lambda_i \rangle \le \langle \mu, v \check\Lambda_i\rangle 
$$
holds, and

\item for every negative fundamental coweight $-\check\Lambda_i$ and $v\in W$, the inequality 
$$
\langle \lambda, (w^{-1}*_{-}v) (-\check \Lambda_i) \rangle \le \langle \mu, v (-\check \Lambda_i) \rangle
$$
holds, and 

\item for every finite fundamental coweight $\check \omega_i$ and $v\in W$, the inequality 
$$
\langle \lambda, (w^{-1}*_{\frac{\infty}{2}}v) \check \omega_i \rangle \le \langle \mu, v \check \omega_i \rangle
$$
holds. 
\end{enumerate} 
\end{theorem}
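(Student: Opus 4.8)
The plan is to reduce $\mu \in P_\lambda^w$ to a semistability question for $\mathbb{L} := \mathbb{C}_\mu \otimes L_w(\lambda)$ on $\mathbf{X}(w)$ and then to read off the three families from the Hilbert--Mumford criterion. The level condition is necessary because every weight of $V_\lambda$ has $K$-eigenvalue $\langle \lambda, K\rangle$; so if $\langle \mu, K\rangle \ne \langle \lambda, K\rangle$ then $\mu \notin P_\lambda^w$ and there is nothing to prove. Assuming $\langle \mu, K\rangle = \langle \lambda, K\rangle$, the discussion preceding Lemma \ref{bridge} gives $\mu \in P_\lambda^w \iff \mathbf{X}(w)^{ss}(\mathbb{L}) \ne \emptyset$. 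It therefore suffices to prove that $\mathbf{X}(w)^{ss}(\mathbb{L}) \ne \emptyset$ if and only if all three families of inequalities hold.

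The technical core is a uniform evaluation of the numerical invariant at the generic point of $\mathbf{X}(w)$. Fix a type $\diamond \in \{+, -, \semi\}$, the corresponding $B(\eta)$, a standard cocharacter $\eta$ of that type, and $v \in W$. By Lemma \ref{some-dense} there is $u \in W$ with $v B(\eta) v^{-1} u I/I \cap \mathbf{X}(w)$ dense in $\mathbf{X}(w)$. I claim this forces $u^{-1}v = w^{-1} \ast_\diamond v$ exactly: the equivalence of Lemma \ref{ankylosaurus} shows that containment $\mathbf{X}(w) \subseteq \overline{vB(\eta)v^{-1}uI/I}$ is equivalent to $w^{-1}\ast_\diamond v \le_\diamond u^{-1}v$, and any strictly larger $u^{-1}v$ would (via the closure relations of Section \ref{geom-orders}) place the irreducible $\mathbf{X}(w)$ in the boundary of the cell, contradicting density. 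The generic point $x$ then lies in $P(v\eta)\, u\, I/I$ (using $vB(\eta)v^{-1} \subseteq P(v\eta)$ and the decomposition of Lemma \ref{adequate}), and Lemma \ref{bridge} yields
$$
\mu^{\mathbb{L}}(x, v\eta) = \langle \lambda, (w^{-1}\ast_\diamond v)\eta\rangle - \langle \mu, v\eta\rangle .
$$
Since $B(\eta)$, and hence $w^{-1}\ast_\diamond v$, depends only on $\diamond$ and $v$, this expression is linear in $\eta$ on the whole closed chamber of the given type.

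With this formula both implications are short. If $\mu \in P_\lambda^w$, then $\mathbf{X}(w)^{ss}(\mathbb{L})$ is a nonempty open, hence dense, subset of the irreducible $\mathbf{X}(w)$ and so contains the generic point $x$ above; the Hilbert--Mumford criterion gives $\mu^{\mathbb{L}}(x, v\eta) \le 0$, which is exactly the inequality indexed by $(\diamond, \eta, v)$. Substituting $\eta = \check\Lambda_i,\, -\check\Lambda_i,\, \check\omega_i$ in the positive, negative, and level-zero chambers produces families (1), (2), (3). Conversely I argue the contrapositive: if $\mathbf{X}(w)^{ss}(\mathbb{L}) = \emptyset$, the Hesselink stratification recalled in Section \ref{GITsection} provides a dense open stratum with a common destabilizing one-parameter subgroup, which after conjugation by some $v \in W$ we write as $v\eta_0$ for a standard $\eta_0$ of one of the three types. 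At the generic $x$ we get $\mu^{\mathbb{L}}(x, v\eta_0) > 0$, i.e. $\langle \lambda, (w^{-1}\ast_\diamond v)\eta_0\rangle > \langle \mu, v\eta_0\rangle$. Writing $\eta_0 = \sum_i c_i \eta_i + c_K K$ as a nonnegative combination of the fundamental coweights $\eta_i$ of the appropriate type together with the central direction (note $d \in \op{span}\{\check\Lambda_0, K\}$), the level hypothesis makes the $K$-term contribute $0$, so by linearity some single fundamental-coweight inequality must already be violated. This produces a failure among families (1)--(3), completing the equivalence.

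The main obstacle is the second paragraph: establishing the \emph{exact} identity $u^{-1}v = w^{-1}\ast_\diamond v$ at the dense cell, and justifying the linear reduction to fundamental coweights. The former needs care because the cells can be infinite-dimensional (e.g. in $G(\mathcal{K})/I^-$), so ``boundary'' must be interpreted through the closure relations of Section \ref{geom-orders} and the Demazure-product description of Lemma \ref{ankylosaurus} rather than by naive dimension counting. The latter requires that $w^{-1}\ast_\diamond v$ be genuinely constant as $\eta$ ranges over its closed chamber -- in particular that nothing changes as $\eta$ moves onto a wall, where $P(\eta)$ enlarges to a parabolic while $B(\eta)$ and the Demazure product stay fixed -- and that the fundamental coweights of the matching type together with $K$ span that chamber with nonnegative coefficients. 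The three-type casework is otherwise uniform; should one instead wish to deduce the inequalities for arbitrary $\eta$ directly from the fundamental-coweight ones, the needed sign input is exactly Lemma \ref{seahorse}, whose hypothesis $\gamma \succ_\diamond 0 \Rightarrow \langle \gamma, \eta\rangle \ge 0$ one verifies for each fundamental coweight of the corresponding type.
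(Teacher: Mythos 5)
Your converse direction matches the paper's, and your reduction to fundamental coweights (drop the $K$-component using the level hypothesis, then use nonnegativity of the coefficients) is exactly the paper's final step. The problem is the forward direction, where you replace the paper's argument by a Hilbert--Mumford computation at the generic point, and this rests on a claim you have not actually established: that the dense stratum $vB(\eta)v^{-1}uI/I \cap \mathbf{X}(w)$ satisfies $u^{-1}v = w^{-1}\ast_\diamond v$ \emph{exactly}. Lemma \ref{ankylosaurus} only gives you one inequality from density, namely $w^{-1}\ast_\diamond v \le_\diamond u^{-1}v$, and this points the wrong way for your purposes: by Lemma \ref{seahorse} it yields $\langle \lambda, (w^{-1}\ast_\diamond v)\eta\rangle \ge \langle \lambda, u^{-1}v\eta\rangle$, so the Hilbert--Mumford bound $\langle \lambda, u^{-1}v\eta\rangle \le \langle \mu, v\eta\rangle$ at a semistable generic point does \emph{not} imply the stated inequality with $w^{-1}\ast_\diamond v$ on the left. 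Your one-sentence justification (``a strictly larger $u^{-1}v$ would place $\mathbf{X}(w)$ in the boundary of the cell'') presupposes the reverse closure relation --- that a cell $C_\tau$ can meet $\overline{C_\sigma}$ only if $\tau \le_\diamond \sigma$ --- but the lemmas of Section \ref{geom-orders} prove only the containment of smaller cells in closures of larger ones, not the converse, and in the thick and semi-infinite settings the converse is not something you can wave at. There is also a smaller inaccuracy: even in easy finite-type examples the literal identity $u^{-1}v = w^{-1}\ast_\diamond v$ fails (the index $u$ is only defined up to the stabilizer of $\eta$), so at best you could hope for equality of the pairings $\langle\lambda, u^{-1}v\eta\rangle = \langle\lambda,(w^{-1}\ast_\diamond v)\eta\rangle$, which again you would have to prove.

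The paper avoids all of this by proving necessity without GIT: since $\mu$ is a convex combination of the vertices $q\lambda$ with $q\le w$ (Proposition \ref{convhull}), it suffices to verify each inequality at a vertex, where it reads $\langle\lambda,(w^{-1}\ast_\diamond v)\eta\rangle \le \langle\lambda, q^{-1}v\eta\rangle$; this follows from $q^{-1}v \le_\diamond w^{-1}\ast_\diamond v$ (Proposition \ref{Demprodmax}) together with Lemma \ref{seahorse} and dominance of $\lambda$. Note that here the inequality from Lemma \ref{seahorse} points the \emph{right} way precisely because the Demazure product is the maximum of $[e,w^{-1}]v$, not a lower bound for the dense stratum. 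If you want to keep your GIT-flavored forward direction you must either supply the missing identification of the dense stratum (which amounts to proving the full closure-relation statement for all three orders) or, more simply, adopt the convexity argument; the converse direction of your write-up needs no change beyond replacing the exact identity by the inequality $u^{-1}v \ge_\diamond w^{-1}\ast_\diamond v$, which is all that direction requires.
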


\begin{proof}
First, suppose $\mu\in P_\lambda^w$. Since $\mu$ is a convex combination of the vertices $q\lambda$, $q\le w$, it will suffice to ensure that each vertex $q\lambda$ satisfies the inequalities (1) - (3), the condition $\langle \lambda, K\rangle = \langle q \lambda , K \rangle$ being apparent from the $W$-invariance of $K$. 

For this, we must argue that 
$$
\langle \lambda, (w^{-1}*_{\diamond} v) \eta \rangle \le \langle q \lambda, v \eta \rangle
$$
for each choice of $\diamond$, where $\langle \gamma, \eta \rangle \ge 0$ for all $\gamma \succ_\diamond 0$. 

From Proposition \ref{Demprodmax}, we know that $q^{-1}v \le_{\diamond} w^{-1}*_{\diamond} v$. Hence 
$$
q^{-1}v \eta - (w^{-1}*_{\diamond} v) \eta
$$
is a nonnegative combination of positive coroots by Lemma \ref{seahorse}. Since $\lambda$ is dominant, the inequality holds. 

Second, suppose $\mu\not\in V_\lambda^w$. It could be that $\langle \lambda, K\rangle \ne \langle \mu, K\rangle$. But suppose $\langle \lambda, K\rangle = \langle \mu, K\rangle$; we will find an inequality that $\mu$ violates.  For all $n\ge 1$, 
$$
H^0({\bf X}(w), (\C_\mu \otimes L_w(\lambda))^{\otimes n})^\tkm=0, 
$$
or in other words, $\mathbb{L} = \C_\mu \otimes L_w(\lambda)$ is unstable. 

There is an open subset $U \subseteq {\bf X}(w)$ with the same OPS $\eta'$ that witnesses the instability: $x\in U \implies \mu^{\mathbb{L}}(x,\eta') >0$. Now, depending on whether $\langle \delta, \eta' \rangle$ is positive, negative, or zero, we can manage to find a $v\in W$ such that $\eta = v^{-1}\eta'$ is dominant, antidominant, or finite dominant, respectively. 

As pointed out in Lemma \ref{some-dense}, we can find $u\in W$ such that $v B(\eta) v^{-1}u I/I$ has dense intersection with ${\bf X}(w)$. So take any $x\in U\cap v B(\eta) v^{-1}u I/I$. We have $\mu^{\mathbb{L}}(x,\eta) = \langle \lambda, u^{-1}v \eta \rangle - \langle \mu, v \eta \rangle $ by Lemma \ref{bridge}, and this expression is $>0$: 
\begin{align*}
\langle \lambda, u^{-1}v\eta \rangle > \langle \mu, v\eta \rangle.
\end{align*}
Recalling Lemma \ref{ankylosaurus}, observe that $u^{-1}v \ge_{\diamond} w^{-1}*_{\diamond} v$, and thus $\langle \lambda, (w^{-1}*_{\diamond} v) \eta \rangle > \langle \lambda, u^{-1}v \eta \rangle$ by Lemma \ref{seahorse}.

Finally, we relate the ``failed inequality'' 
$$
\langle \lambda, (w^{-1}*_{\diamond} v) \eta \rangle  > \langle \mu, v\eta \rangle
$$
to one of those in the theorem statement. Even though $\eta$ might not be fundamental (or negative fundamental, or finite fundamental), we can write $\eta$ as a nonnegative combination $\sum c_i \check \Lambda_i$ (or $\sum c_i  (-\check \Lambda_i)$, or $\sum c_i \check \omega_i$, as the case may be) modulo $K$. As we have assumed $\langle \lambda, K\rangle = \langle \mu, K\rangle$, we get 
$$
\langle \lambda, (w^{-1}*_{\diamond} v) \left(\sum c_i \check \Lambda_i\right) \rangle > \langle  \mu, v \left(\sum c_i \check \Lambda_i\right) \rangle,
$$
or equivalent for the other two cases. For some index $i$ such that $c_i\ne 0$, we must therefore have 
$$
\langle \lambda, (w^{-1}*_{\diamond} v) \check \Lambda_i \rangle > \langle \mu, v  \check \Lambda_i \rangle 
$$ 
or equivalent. 
\end{proof}

\section{Stabilizers of cocharacters and coset representatives} \label{Stabilizer section}

In Theorem \ref{first-pass}, the inequalities describing $P_\lambda^w$ are given in terms of pairings of the form $\langle \nu, v \eta \rangle$ for certain characters $\nu$, cocharacters $\eta$, and Weyl group elements $v$. In the subsequent sections, we will explore the face structure of $P_\lambda^w$. We first introduce some simplifying combinatorial conventions. 

Specifically, the pairings of Theorem \ref{first-pass} are unchanged when replacing $v$ with $u = vx$ as long as $x\eta=\eta$, and in order to understand the essential role of the constraints, we will soon find it helpful to make a judicious choice of a representative $u$. This is done in Corollary \ref{ineqcosetrep} below. First, we analyze some intricacies of the stabilizer subgroup $W_\eta \subset W$. 

When $\eta$ is in the Tits cone, the stabilizers $W_\eta$ and coset representatives $W^\eta:=W/W_\eta$ are textbook. However, the case when $\langle \delta, \eta \rangle=0$ is less widespread in the literature. In this case, the stabilizers are less well-behaved than their classical counterparts. Instead, we find it useful to recall a different subgroup and corresponding coset representatives--\emph{Peterson's coset representatives} \cite{Pet}, \cite{LS}--as these have the analogous combinatorial properties we will need. 

Throughout, we say $\eta$ is an ``appropriately dominant" cocharacter  if it is either affine dominant, affine antidominant, or finite dominant. By way of motivation, we start with the following proposition for when $\eta$ is affine dominant or antidominant (cf. \cite{Kac}*{Prop. 3.12a}).

\begin{proposition} \label{TitsStab} Let $\eta$ be an affine dominant or antidominant cocharacter, and let $W_\eta \subset W$ be the stabilizer of $\eta$. Then $W_\eta$ is generated by the simple reflections which it contains; that is, if $J:=\{j | s_j(\eta)=\eta \} \subseteq \{0,1,\dots, n\}$ and $W_J:=\langle s_j | j \in J \rangle$, then $W_\eta \cong W_J$.
\end{proposition}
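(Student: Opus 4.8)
The plan is to prove this by the classical induction-on-length argument for the stabilizer of a point in the closure of the fundamental chamber; this applies precisely because an affine dominant or antidominant $\eta$ lies in the Tits cone. First I would dispose of the antidominant case by a reduction to the dominant one: since $s_j(\eta) = \eta$ if and only if $s_j(-\eta) = -\eta$, and $-\eta$ is affine dominant exactly when $\eta$ is affine antidominant, both the stabilizer $W_\eta = W_{-\eta}$ and the index set $J$ are unchanged under $\eta \mapsto -\eta$. Hence it suffices to treat $\eta$ affine dominant, i.e. $\langle \alpha_i, \eta\rangle \geq 0$ for all $i \in \{0,1,\dots,n\}$.

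The inclusion $W_J \subseteq W_\eta$ is immediate: each generator $s_j$ with $j \in J$ fixes $\eta$ by the definition of $J$, so the subgroup they generate fixes $\eta$. The content is the reverse inclusion $W_\eta \subseteq W_J$, which I would establish by induction on the standard length $l(w)$ for $w \in W_\eta$. The base case $l(w) = 0$ is trivial. For the inductive step, take $w \in W_\eta$ with $w \neq e$ and choose a left descent, i.e. a simple reflection $s_i$ with $l(s_iw) = l(w) - 1$; such an $s_i$ exists since $w \neq e$, and left descents are characterized by $w^{-1}\alpha_i$ being a negative real root.

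The key step is to show that this descent reflection $s_i$ in fact lies in $W_\eta$. Using $w\eta = \eta$, $W$-invariance of the pairing, and dominance,
\[
0 \le \langle \alpha_i, \eta\rangle = \langle \alpha_i, w\eta\rangle = \langle w^{-1}\alpha_i, \eta\rangle \le 0,
\]
where the final inequality holds because $w^{-1}\alpha_i$ is a negative real root, hence a nonpositive integer combination of simple roots, paired against the dominant coweight $\eta$. Therefore $\langle \alpha_i,\eta\rangle = 0$, and since $s_i\eta = \eta - \langle\alpha_i,\eta\rangle\alpha_i^\vee$, we conclude $s_i\eta = \eta$, so $i \in J$. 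Then $s_iw \in W_\eta$ has length $l(w) - 1$, so by the inductive hypothesis $s_iw \in W_J$, whence $w = s_i(s_iw) \in W_J$. This closes the induction and yields $W_\eta = W_J$.

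I do not expect a serious obstacle, as this is the standard argument for parabolic stabilizers in Coxeter groups; the only points requiring care are bookkeeping of conventions (that affine dominance means $\langle\alpha_i,\eta\rangle\ge 0$, and that $l(s_iw) < l(w)$ is equivalent to $w^{-1}\alpha_i$ being a negative real root) and the observation that the argument genuinely uses $\eta$ lying in the Tits cone. This is exactly why it must be replaced by Peterson's coset representatives in the level-zero case $\langle\delta,\eta\rangle = 0$, which is excluded from this proposition.
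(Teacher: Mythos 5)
Your proof is correct and is exactly the standard induction-on-length argument for stabilizers of points in the closure of the fundamental chamber; the paper gives no proof of its own but cites \cite{Kac}*{Prop.\ 3.12a}, where this same argument appears. The reduction of the antidominant case to the dominant one and the remark that the level-zero case genuinely falls outside this argument are both handled appropriately.
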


\noindent In this case, $W_\eta$ is a standard parabolic subgroup of $W$. At the level of root systems, we can define 
$$
\Phi^+_J =  \Phi^+ \cap \bigoplus_{i \in J} \Z_{\geq 0} \alpha_i,
$$
which are the positive roots in the corresponding Levi subalgebra of $\mf[g]$ corresponding to $J$. Note that when $J \neq \{0,1,\dots, n\}$, then both $|W_\eta|$ and $|\Phi^+_J|$ are finite. 

We can next consider the cosets $W^\eta:=W/W_\eta$, which are more commonly denoted in the literature as $W^J:=W/W_J$, for such $\eta$. These cosets $vW_\eta$ come with distinguished representatives, the \emph{minimum-length representatives}, which for our purposes are most conveniently characterized by the following classical proposition (cf. \cite{BjBr}*{\S 2.4} or \cite{KumarBook}*{1.3.E Exercise}).

\begin{proposition} \label{TitsMinLen} Let $\eta$, $J$, and $W^\eta$ be as above. Then 
$$
W^\eta = \{v \in W | v\left(\Phi^+_J\right) \subset \Phi^+\}.
$$
\end{proposition}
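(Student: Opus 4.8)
The plan is to reduce the statement to the standard combinatorics of parabolic quotients in a Coxeter group. By Proposition \ref{TitsStab} the stabilizer $W_\eta$ is the standard parabolic subgroup $W_J$, so $W^\eta = W/W_J$ and the set we must identify is precisely the set of minimal-length representatives of the cosets $vW_J$. Writing the claimed set as $\{v : v(\Phi^+_J)\subset \Phi^+\}$, one inclusion is immediate: if $v$ sends every positive root of $\Phi_J$ into $\Phi^+$, then in particular $v\alpha_j \succ 0$ for each $j\in J$, and this condition on simple roots already characterizes membership in $W^\eta$. So the real content is the reverse inclusion, namely \emph{promoting} a condition on the simple roots $\alpha_j$, $j\in J$, to a condition on all of $\Phi^+_J$.

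First I would record the two textbook facts I intend to use for $(W,S)$ (cf. \cite{BjBr}): (i) $v$ is the minimal-length element of $vW_J$ if and only if $l(vs_j) > l(v)$ for every $j\in J$; and (ii) for any $v\in W$ and any simple $s_j$, one has $l(vs_j) > l(v)$ if and only if $v\alpha_j \in \Phi^+$. Together these give $W^\eta = \{v : v\alpha_j \succ 0 \ \forall j\in J\}$, which disposes of the easy inclusion and reframes the remaining task purely in terms of root signs.

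For the reverse inclusion, suppose $v\alpha_j\succ 0$ for all $j\in J$ and fix an arbitrary $\beta\in\Phi^+_J$. Since $v\in W^J$, the parabolic factorization supplies length-additivity $l(vu) = l(v)+l(u)$ for all $u\in W_J$ (\cite{BjBr}*{Prop. 2.4.4}). The key combinatorial input is that every positive root $\beta\in\Phi^+_J$ can be realized as $\beta = u\alpha_j$ for some $u\in W_J$ and $j\in J$ with $l(us_j)=l(u)+1$, i.e. as the inversion created by the last letter of a reduced word for $us_j$ inside $W_J$. Granting this, I would compute $l(vus_j) = l(v)+l(us_j) = l(vu)+1 > l(vu)$, so by (ii) applied to the element $vu$ we obtain $(vu)\alpha_j = v\beta \succ 0$. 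As $\beta$ was arbitrary, $v(\Phi^+_J)\subset\Phi^+$, completing the equivalence.

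The main obstacle is exactly this promotion step; everything rests on the parabolic length-additivity together with the reduced-word realization of positive roots as inversions. An equivalent route, should the factorization be inconvenient, is to argue directly with the inversion set $N(v)=\{\gamma\succ 0 : v\gamma\prec 0\}$: this set is co-closed, so if some non-simple $\beta\in\Phi^+_J\cap N(v)$ existed, one could split $\beta=\gamma_1+\gamma_2$ into two positive roots of $\Phi_J$ (possible since $\Phi_J$ is of finite type whenever $J\ne S$) with some $\gamma_i\in N(v)$, and descending in height would force a simple $\alpha_j\in N(v)$, contradicting the hypothesis. One affine-specific caveat to flag is that the length–root dictionary in (ii) must be read with respect to the real roots; this is legitimate because $(W,S)$ is a genuine Coxeter group and each inversion set is a finite set of real roots, so the finite-type arguments transfer verbatim.
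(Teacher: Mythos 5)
The paper does not actually prove this proposition; it is stated as a classical fact with references to \cite{BjBr}*{\S 2.4} and \cite{KumarBook}*{1.3.E Exercise}, so there is no in-paper argument to compare against. Your proof is a correct and complete rendition of the standard textbook argument: the reduction to the simple-root criterion $v\alpha_j\succ 0$ for $j\in J$ via the length dictionary, followed by the promotion to all of $\Phi^+_J$ using length-additivity of the parabolic factorization together with the realization of each $\beta\in\Phi^+_J$ as $u\alpha_j$ with $u\in W_J$ and $l(us_j)=l(u)+1$, is exactly the classical route, and it transfers verbatim to the affine Weyl group since only real roots and finite inversion sets are involved.
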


We now want to consider the case when $\eta$ is finite dominant, so that $\langle \alpha_i, \eta \rangle \geq 0$ for $i \neq 0$ and $\langle \delta, \eta \rangle =0$. Then unlike in Proposition \ref{TitsStab}, the stabilizer $W_\eta$ in this case is no longer a finite parabolic subgroup of $W$. We give a precise description of the stabilizers $W_\eta$ in the subsequent Lemma \ref{stabilizer}. But first, we introduce a distinguished subgroup $(W_J)_{\text{af}} \subset W$ determined by $\eta$ and investigate its relation to the stabilizer $W_\eta$.

Concretely, let $\eta$ be a finite dominant cocharacter and set $J:=\{i | \langle \alpha_i, \eta \rangle = 0\} \subseteq \{1,2,\dots,n\}=:[n]$. Then we can consider the parabolic subgroup $W_J \subset \mathring{W}$ of the finite Weyl group. Let $\Phi^+_J$ be the associated set of positive roots; note that here $\Phi^+_J \subseteq \mathring{\Phi}^+$. Let $\mathring{Q}_J^\vee:= \bigoplus_{i \in J} \Z \alpha_i^\vee$ be the associated coroot lattice. With this data, we define the following subgroup of $W$, following the notation of the lecture notes \cite{Pet} (see also \cite{LS}). 

\begin{definition} For $J$ as above, let $(W_J)_{\text{af}}:= W_J \ltimes \mathring{Q}_J^\vee$. 
\end{definition} 

\noindent That is, $(W_J)_{\text{af}}$ is the affine Weyl group corresponding to the finite Weyl group $W_J$. Similarly, we can define an associated set of positive (affine) roots to the index set $J$. 

\begin{definition} Let $\mathring{\Phi}_J$ be the finite root system associated to $J$. Then we define $(\Phi_J)^+_{\text{af}}$ by 
$$
(\Phi_J)^+_{\text{af}}:= \{ \beta \in \Phi^+ | \beta=\gamma+k\delta, \ \gamma \in \mathring{\Phi}_J\}.
$$
\end{definition}

\noindent It is an easy exercise to see that $(W_J)_{\text{af}} \cong \langle s_\beta | \beta \in (\Phi_J)^+_{\text{af}} \rangle$ (cf. \cite{LS}*{Lemma 10.5}). In particular, $(W_J)_{\text{af}}$ is a reflection subgroup of $W$. Moreover, $(W_J)_{\text{af}}$ is actually a Coxeter group, whose simple generators can be computed explicitly via results of Dyer \cite{Dyer1}. We will later be interested in the Coxeter presentation of $(W_J)_{\text{af}}$ and its associated length function. 

The following lemma connects $(W_J)_{\text{af}}$ to the stabilizer of $\eta$. 

\begin{lemma} \label{stabilizer}
Let $\eta$ be finite dominant with $J=\{j | \langle \alpha_j, \eta \rangle =0\}$. Denote by $W_\eta$ the stabilizer subgroup of $\eta$ in $W$. Then $(W_J)_{\text{af}} \subset W_\eta$, and 
$$
W_\eta \cong W_J \ltimes \eta^\perp,
$$
where we define $\eta^\perp:= \{\bt{\xi} | \xi \in \mathring{Q}^\vee, \ (\eta | \xi)=0\}$. Moreover, if $J=[n] \backslash \{i\}$ for some $1 \leq i \leq n$, then $W_\eta \cong (W_J)_{\text{af}}$.
\end{lemma}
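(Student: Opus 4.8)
The plan is to compute the stabilizer $W_\eta$ completely explicitly inside the semidirect product $W = \mathring W \ltimes \mathring Q^\vee$ and then read off all three assertions. The crucial input is the formula for how a translation acts on a \emph{level-zero} cocharacter. First I would record that, because $\langle \delta, \eta\rangle = 0$, the translation $\bt{\xi}$ acts by $\bt{\xi}(\eta) = \eta - (\xi \mid \eta)K$; this follows from the standard formula (as in \cite{Kac}) for the action of $W$ on $\mathfrak{h}$ specialized to level zero, and one checks it is consistent with $W$ fixing $K$ and preserving $(\cdot \mid \cdot)$. Writing $\eta = \mathring\eta + cK$ with $\mathring\eta \in \mathring{\mathfrak{h}}$, and using that $\mathring W$ fixes $K$ while $(K \mid \xi) = 0$, a general element acts by
\[
(w\bt{\xi})(\eta) = w(\mathring\eta) + cK - (\xi \mid \mathring\eta)K.
\]
Since $w(\mathring\eta)$ lies in $\mathring{\mathfrak{h}}$ and the translation only alters the $K$-component, the $\mathring{\mathfrak{h}}$- and $K$-parts decouple and there is no interaction between $w$ and $\xi$.

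From this the decomposition in the second claim is immediate: $w\bt{\xi}$ fixes $\eta$ if and only if $w(\mathring\eta) = \mathring\eta$ and $(\xi \mid \mathring\eta) = 0$. The first condition says $w$ lies in the stabilizer of the dominant finite coweight $\mathring\eta$ in $\mathring W$, which is the standard parabolic $W_J$ by the finite-type analogue of Proposition \ref{TitsStab} (the stabilizer of a dominant coweight is generated by the simple reflections fixing it, and these are exactly $\{s_j : j \in J\}$). The second condition is precisely $\bt{\xi} \in \eta^\perp$, since $(\xi \mid \eta) = (\xi \mid \mathring\eta)$. Hence $W_\eta = \{w\bt{\xi} : w \in W_J,\ \bt{\xi} \in \eta^\perp\}$; that this is the semidirect product $W_J \ltimes \eta^\perp$ follows once one checks that $W_J$ stabilizes $\eta^\perp$ (if $(\xi \mid \mathring\eta) = 0$ and $w \in W_J$ then $(w\xi \mid \mathring\eta) = (\xi \mid w^{-1}\mathring\eta) = (\xi \mid \mathring\eta) = 0$), which also confirms closure. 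For the first claim I would then simply observe that each generator $\alpha_j^\vee$ of $\mathring Q_J^\vee$ (for $j \in J$) satisfies $(\alpha_j^\vee \mid \eta) = \tfrac{2}{(\alpha_j \mid \alpha_j)}\langle \alpha_j, \eta\rangle = 0$, so $\mathring Q_J^\vee \subseteq \eta^\perp$; combined with $W_J \subseteq W_J$ this gives $(W_J)_{\text{af}} = W_J \ltimes \mathring Q_J^\vee \subseteq W_J \ltimes \eta^\perp = W_\eta$.

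For the last claim the only remaining point is the reverse lattice inclusion $\eta^\perp \subseteq \mathring Q_J^\vee$ when $J = [n] \setminus \{i\}$. Here $\mathring\eta = \langle \alpha_i,\eta\rangle \check\omega_i$ is a positive multiple of the fundamental coweight $\check\omega_i$, so the linear functional $\phi : \mathring Q^\vee \to \mathbb R$, $\xi \mapsto (\xi \mid \mathring\eta)$, vanishes on $\alpha_j^\vee$ for every $j \neq i$ and takes the positive value $\tfrac{2}{(\alpha_i \mid \alpha_i)}\langle\alpha_i,\eta\rangle$ on $\alpha_i^\vee$. Since $\{\alpha_1^\vee, \dots, \alpha_n^\vee\}$ is a $\mathbb Z$-basis of $\mathring Q^\vee$, writing $\xi = \sum_k c_k \alpha_k^\vee$ gives $\phi(\xi) = c_i\,\phi(\alpha_i^\vee)$, which vanishes exactly when $c_i = 0$, i.e.\ exactly when $\xi \in \mathring Q_J^\vee$. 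Thus $\eta^\perp = \ker\phi = \mathring Q_J^\vee$ and $W_\eta = W_J \ltimes \mathring Q_J^\vee = (W_J)_{\text{af}}$. (When $|[n]\setminus J| \geq 2$ the functional $\phi$ has a strictly larger kernel and $\eta^\perp \supsetneq \mathring Q_J^\vee$, which explains why equality holds only in the corank-one case.)

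The main obstacle I anticipate is purely bookkeeping with conventions: pinning down the translation action at level zero and keeping the identification $\mathfrak{h} \cong \mathfrak{h}^*$ straight, so that $(\alpha_j^\vee \mid \eta)$ is correctly matched with $\langle \alpha_j,\eta\rangle$ and the condition $(\xi \mid \eta) = 0$ is correctly matched with membership in $\eta^\perp$. Once the decoupled action formula $(w\bt{\xi})(\eta) = w(\mathring\eta) + (c - (\xi \mid \mathring\eta))K$ is in hand, every assertion reduces to elementary finite-type facts about parabolic subgroups and the coroot lattice.
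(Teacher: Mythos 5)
Your proof is correct and follows essentially the same route as the paper's: both hinge on the level-zero translation formula $\bt{\xi}(\eta)=\eta-(\eta\,|\,\xi)K$ to decouple the finite Weyl part from the lattice part, reduce the first factor to the classical finite-type stabilizer $W_J$, and identify the second with $\eta^\perp$. The only cosmetic differences are that the paper extracts the $K$-coefficient by pairing with $\Lambda_0$ rather than writing $\eta=\mathring\eta+cK$, proves $(W_J)_{\text{af}}\subset W_\eta$ via its reflection generators rather than via $\mathring{Q}^\vee_J\subseteq\eta^\perp$, and leaves the corank-one case as ``clear,'' which you instead spell out.
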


\begin{proof}
Using $(W_J)_{\text{af}} \cong \langle s_\beta | \beta \in (\Phi_J)^+_{\text{af}} \rangle$ as remarked above, it is clear that $(W_J)_{\text{af}} \subset W_\eta$, since by construction $\langle \beta, \eta \rangle = 0$ for all $\beta \in (\Phi_J)^+_{\text{af}}$. Now, let $u \in W$ such that $u(\eta)=\eta$. Write $u=w \bt{\xi}$ for $w \in \mathring{W}$ and $\xi \in \mathring{Q}^\vee$. Then we have $\bt{\xi}(\eta)=w^{-1}(\eta)$. Since $\langle \delta, \eta \rangle = 0$, we get 
$$
\bt{\xi}(\eta)= \eta - (\eta | \xi) K.
$$
Pairing with $\Lambda_0$, we see
$$
\langle \Lambda_0, w^{-1}(\eta) \rangle = \langle \Lambda_0, \bt{\xi}(\eta)\rangle = \langle \Lambda_0, \eta \rangle - (\eta | \xi),
$$
as $\langle \Lambda_0, K\rangle =1$. But since $w \in \mathring{W}$, necessarily $\langle \Lambda_0, w^{-1}(\eta) \rangle = \langle \Lambda_0, \eta \rangle$. This forces $(\eta | \xi)=0$, so that $\bt{\xi} \in \eta^\perp$. Finally, $\bt{\xi}(\eta)=\eta$ and thus $w(\eta)=\eta$; viewing $\eta$ as a dominant cocharacter of the finite torus by restriction, we can use the classical result for $\mathring{W}$ to see that $w \in W_J$. This proves the first claim. For $J=[n] \backslash \{i\}$, it is clear to see that in this case $\eta^\perp = \mathring{Q}_J^\vee$. 

\end{proof}

\begin{remark} It is \emph{not} true in general that $W_\eta \cong (W_J)_{\text{af}}$. Consider for example the case when $\mf[g]=A_3^{(1)}$, and let $\eta$ be the cocharacter $\omega_1^\vee+\omega_2^\vee$ and $J=\{3\}$. Then indeed $(W_J)_{\text{af}}$ stabilizes $\eta$, but so do the additional lattice elements $\bt{\xi}$ for $\xi \in \Z(\alpha_1^\vee-\alpha_2^\vee)$.

\end{remark}

While we do not have $W_\eta \cong (W_J)_{\text{af}}$ in all cases, we will see in what follows that it suffices to just consider $(W_J)_{\text{af}}$ and its distinguished coset representatives. We refer to the latter as Peterson's coset representatives, as he first introduced them in his study of the quantum cohomology of generalized flag varieties $G/P$ \cite{Pet}. These representatives are defined to have the same combinatorial properties as minimum-length representatives in the dominant/antidominant cases; more precisely,  

\begin{proposition}[\cite{LS}*{Lemma 10.6}] For $J$ as above, define $(W^J)_{\text{af}}$ by 
$$
(W^J)_{\text{af}}:= \{w \in W | w(\beta) \in \Phi^+ \ \text{for all} \ \beta \in (\Phi_J)^+_{\text{af}}\}.
$$
Then $W=(W^J)_{\text{af}} \ (W_J)_{\text{af}}$; that is, any $w \in W$ can be uniquely written as a product $w=vu$ for $v \in (W^J)_{\text{af}}$, $u \in (W_J)_{\text{af}}$.
\end{proposition}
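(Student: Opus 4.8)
The plan is to recognize $(W^J)_{\mathrm{af}}$ as the set of minimal-length representatives for the left cosets of the reflection subgroup $(W_J)_{\mathrm{af}} \subseteq W$, and to deduce both existence and uniqueness of the factorization from the interplay between the ambient standard length $\ell$ and the induced root-subsystem structure on $(W_J)_{\mathrm{af}}$. Throughout I will write $(\Phi_J)^-_{\mathrm{af}} := -(\Phi_J)^+_{\mathrm{af}}$ and $\Phi_{J,\mathrm{af}} := (\Phi_J)^+_{\mathrm{af}} \sqcup (\Phi_J)^-_{\mathrm{af}}$ for the full reflection subsystem, and I will invoke repeatedly the standard fact that for a positive real root $\beta \in \Phi^+_{re}$ and any $v \in W$, one has $\ell(vs_\beta) < \ell(v)$ if and only if $v(\beta) \in \Phi^-$. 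Since every $\beta \in (\Phi_J)^+_{\mathrm{af}}$ is a positive real root and $s_\beta \in (W_J)_{\mathrm{af}}$ (using $(W_J)_{\mathrm{af}} \cong \langle s_\beta \mid \beta \in (\Phi_J)^+_{\mathrm{af}}\rangle$ recorded above), this applies to every reflection drawn from the subsystem.

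For existence, fix $w \in W$ and let $v$ be an element of minimal length $\ell$ in the coset $w\,(W_J)_{\mathrm{af}}$ (the minimum is attained as $\ell$ takes values in $\Z_{\geq 0}$). I claim $v \in (W^J)_{\mathrm{af}}$. If not, there is some $\beta \in (\Phi_J)^+_{\mathrm{af}}$ with $v(\beta) \in \Phi^-$; then $vs_\beta$ lies in the same coset and $\ell(vs_\beta) < \ell(v)$ by the reflection-length fact, contradicting minimality. Hence $v(\beta) \in \Phi^+$ for all $\beta \in (\Phi_J)^+_{\mathrm{af}}$, so $v \in (W^J)_{\mathrm{af}}$, and setting $u := v^{-1}w \in (W_J)_{\mathrm{af}}$ gives $w = vu$ as required. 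Note this half uses only the reflection-subgroup structure, not the intrinsic Coxeter length of $(W_J)_{\mathrm{af}}$.

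For uniqueness, suppose $w = v_1 u_1 = v_2 u_2$ with $v_i \in (W^J)_{\mathrm{af}}$ and $u_i \in (W_J)_{\mathrm{af}}$. Then $x := v_1^{-1} v_2 = u_1 u_2^{-1} \in (W_J)_{\mathrm{af}}$ and $v_2 = v_1 x$, so it suffices to show $x = e$. Here I use that $(W_J)_{\mathrm{af}}$ is itself a Coxeter group acting on its root system $\Phi_{J,\mathrm{af}}$ with positive system $(\Phi_J)^+_{\mathrm{af}}$ — precisely the Coxeter presentation provided by Dyer \cite{Dyer1} — so that a nonidentity element has positive length in $(W_J)_{\mathrm{af}}$ and therefore sends some root of $(\Phi_J)^+_{\mathrm{af}}$ into $(\Phi_J)^-_{\mathrm{af}}$. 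If $x \neq e$, choose $\gamma \in (\Phi_J)^+_{\mathrm{af}}$ with $x(\gamma) = -\delta'$, $\delta' \in (\Phi_J)^+_{\mathrm{af}}$. Then
$$
v_2(\gamma) = v_1 x(\gamma) = -v_1(\delta'),
$$
and $v_1 \in (W^J)_{\mathrm{af}}$ gives $v_1(\delta') \in \Phi^+$, so $v_2(\gamma) \in \Phi^-$; but $\gamma \in (\Phi_J)^+_{\mathrm{af}}$ and $v_2 \in (W^J)_{\mathrm{af}}$ force $v_2(\gamma) \in \Phi^+$, a contradiction. Thus $x = e$, whence $v_1 = v_2$ and $u_1 = u_2$.

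The main obstacle, and the only point that goes beyond formal Coxeter bookkeeping, is the compatibility of two distinct structures on $(W_J)_{\mathrm{af}}$: its role as a reflection subgroup of $W$ (through which $\ell$ and the ambient sign of $v(\beta)$ are measured) versus its intrinsic Coxeter structure with positive system $(\Phi_J)^+_{\mathrm{af}}$ and its own length function. Existence needs only the former, but uniqueness rests on the latter — specifically on the assertion that a nonidentity element of $(W_J)_{\mathrm{af}}$ must invert some positive root of its subsystem, which is exactly where Dyer's identification of the canonical simple system is essential. I would isolate this compatibility as a cited statement from \cite{Dyer1} rather than re-deriving it, since it is the linchpin joining the two halves of the argument.
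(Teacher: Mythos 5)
The paper offers no proof of this proposition at all---it is quoted directly from Lam--Shimozono \cite{LS}*{Lemma 10.6}---so there is nothing internal to compare your argument against; I can only assess it on its own terms, and on those terms it is correct. Your existence argument (take a minimal-$\ell$ representative of the left coset $w(W_J)_{\text{af}}$ and use that $\ell(vs_\beta)<\ell(v)$ exactly when $v(\beta)\in\Phi^-$ for a positive real root $\beta$) is the standard one and goes through verbatim in the Kac--Moody setting. Your uniqueness argument is also sound, and you are right to flag its one nontrivial input: that $(\Phi_J)^+_{\text{af}}$ is genuinely the positive system of the intrinsic Coxeter structure on the reflection subgroup $(W_J)_{\text{af}}$, so that a nonidentity $x\in (W_J)_{\text{af}}$ must send some $\gamma\in(\Phi_J)^+_{\text{af}}$ into $-(\Phi_J)^+_{\text{af}}$ (note that $x(\gamma)$ automatically stays in the subsystem $\Phi_{J,\mathrm{af}}$ since $s_{x(\gamma)}=xs_\gamma x^{-1}\in (W_J)_{\text{af}}$, so landing in $\Phi^-$ is the same as landing in $-(\Phi_J)^+_{\text{af}}$). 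Citing Dyer \cite{Dyer1} for this is appropriate; you could equally well invoke Peterson's length formula, which the paper records later as Lemma \ref{PetLength} and which makes the assertion ``$x\ne e$ implies $x$ inverts some root of $(\Phi_J)^+_{\text{af}}$'' immediate from $l_{(W_J)_{\text{af}}}(x)>0$. One could streamline further by observing that the uniqueness step is the special case $w=e$ of the general fact that the map $u\mapsto vu$ is injective, but your version is perfectly clean as written.
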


In conclusion, for an appropriately dominant cocharacter $\eta$ of one of our three types, we make the following definitions to unify notation. 

\begin{definition} \label{etagroupsnotation} Let $\eta$ be an appropriately dominant cocharacter. Then we define the following objects:
\begin{enumerate}
\item subgroups $W(\eta) \subset W$ given by
$$
W(\eta):= \begin{cases} 
W_\eta=W_J, & \eta \text{ is affine dominant or affine antidominant with vanishing } J \\
 (W_J)_{\text{af}}, & \eta \text{ is finite dominant with vanishing } J 
 \end{cases}
 $$
 
 \item the corresponding coset representatives 
 $$
 W^{(\eta)}:= \begin{cases}
 W^\eta=W^J, & \eta \text{ is affine dominant or affine antidominant with vanishing } J \\
 (W^J)_{\text{af}}, & \eta \text{ is finite dominant with vanishing } J 
 \end{cases}
 $$
 
 \item the maps $\pi_{(\eta)}: W \to W(\eta)$ and $\pi^{(\eta)}: W \to W^{(\eta)}$ given by the unique factorizations.
 
 \item subsets of positive roots 
 $$
 \Phi^+_\eta:= \begin{cases}
 \Phi^+_J, & \eta \text{ is affine dominant or affine antidominant with vanishing } J \\
 (\Phi_J)^+_{\text{af}}, & \eta \text{ is finite dominant with vanishing } J 
 \end{cases}
 $$
 so that $W^{(\eta)}:= \{w \in W | w(\Phi^+_\eta) \subset \Phi^+ \}$ in each case. 
 
 \end{enumerate}
 \end{definition}

Finally, we return to the statement of Theorem \ref{first-pass}. Consider as in the proof of Theorem \ref{first-pass} any inequality of the form 
$$
\langle \lambda, (w^{-1} \ast_\diamond v)\eta \rangle \leq \langle q\lambda, v \eta \rangle
$$
for each choice of $\diamond$ and appropriately dominant $\eta$. Here, $q \leq w$ and $v \in W$ was chosen so that the Kempf OPS $\eta' = v\eta$. Of course, we could choose $v\in W^{(\eta)}$ by Lemma \ref{stabilizer}. Thus, we can simplify slightly our considerations in Theorem \ref{first-pass} as below. In the following section on faces of $P_\lambda^w$, this restriction will be combinatorially advantageous. 

\begin{corollary} \label{ineqcosetrep}

In each of the inequalities defining $P_\lambda^w$, we can freely restrict to considering only those $v \in W^{(\eta)}$.
\end{corollary}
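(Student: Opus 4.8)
The plan is to show that each inequality in Theorem~\ref{first-pass} indexed by an arbitrary $v \in W$ is \emph{identical} to the one indexed by its coset representative $v' := \pi^{(\eta)}(v) \in W^{(\eta)}$; since $W^{(\eta)} \subseteq W$, this immediately yields that the subfamily of inequalities with $v \in W^{(\eta)}$ already cuts out $P_\lambda^w$. Write the inequality under consideration (for a fixed order $\diamond$ and the appropriately dominant fundamental coweight $\eta$) as $f(v) \le g(v)$, where $f(v) := \langle \lambda, (w^{-1}\ast_\diamond v)\eta\rangle$ and $g(v) := \langle \mu, v\eta\rangle$. Factor $v = v'x$ with $v' = \pi^{(\eta)}(v) \in W^{(\eta)}$ and $x := \pi_{(\eta)}(v) \in W(\eta)$. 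Since $W(\eta)$ stabilizes $\eta$ --- by definition in the affine dominant and antidominant cases, and by Lemma~\ref{stabilizer} in the finite dominant case --- we have $x\eta = \eta$, and the right-hand side is immediately unchanged: $g(v) = \langle \mu, v'x\eta\rangle = \langle \mu, v'\eta\rangle = g(v')$.

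The substance is to show $f(v) = f(v')$, and the crux is to reinterpret the left-hand side as a \emph{minimum}. By Proposition~\ref{Demprodmax}, $w^{-1}\ast_\diamond v$ is the $\le_\diamond$-maximum of the set $\{zv : z \in [e, w^{-1}]\}$, where $[e,w^{-1}]$ is the standard Bruhat interval; write $w^{-1}\ast_\diamond v = z_v v$. For every $z \in [e,w^{-1}]$ we then have $zv \le_\diamond z_v v$, so Lemma~\ref{seahorse} gives that $(zv)\eta - (z_v v)\eta$ is a nonnegative combination of simple coroots, and dominance of $\lambda$ yields $\langle \lambda, zv\eta\rangle \ge \langle \lambda, z_v v\eta\rangle$. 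Consequently
$$
f(v) = \langle \lambda, (w^{-1}\ast_\diamond v)\eta\rangle = \min_{z \in [e,w^{-1}]} \langle \lambda, zv\eta\rangle.
$$
For this step to apply in all three flavors, one must check that being ``appropriately dominant'' is exactly the hypothesis of Lemma~\ref{seahorse}, namely $\gamma \succ_\diamond 0 \implies \langle \gamma, \eta\rangle \ge 0$; this is a one-line verification in each case (using $\langle \delta, \eta\rangle = 0$ in the semi-infinite case).

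With the minimum characterization in hand, the conclusion is immediate: for each fixed $z$ one has $\langle \lambda, zv\eta\rangle = \langle \lambda, zv'x\eta\rangle = \langle \lambda, zv'\eta\rangle$, so the two minima agree term-by-term and $f(v) = \min_{z} \langle \lambda, zv'\eta\rangle = f(v')$. Combined with $g(v) = g(v')$, the inequality for $v$ coincides with that for $v' \in W^{(\eta)}$, which proves the corollary. I expect the only genuine obstacle to be the passage from the Demazure-product ``max'' to the ``min'' pairing: it is essential that the maximizer $z_v$ be taken over the \emph{standard} interval $[e,w^{-1}]$ uniformly for all three orders (as guaranteed by Proposition~\ref{Demprodmax}), since the values $\langle \lambda, zv\eta\rangle$ depend on $v$ only through $v\eta$ --- which is precisely the quantity left invariant by right multiplication by $W(\eta)$.
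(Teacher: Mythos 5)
Your proposal is correct. The paper's own justification is much terser: it observes that in the proof of Theorem~\ref{first-pass} the element $v$ enters only through the requirement $\eta' = v\eta$ (where $\eta'$ is the Kempf destabilizing OPS), so by the factorization $W = W^{(\eta)}W(\eta)$ and Lemma~\ref{stabilizer} one may simply choose the representative $v$ in $W^{(\eta)}$ from the outset; the accompanying claim that ``the pairings of Theorem~\ref{first-pass} are unchanged when replacing $v$ with $u = vx$ as long as $x\eta = \eta$'' is asserted without proof. Your argument supplies exactly the missing verification for the left-hand pairing: the identity
$$
\langle \lambda, (w^{-1}\ast_\diamond v)\eta\rangle \;=\; \min_{z\in[e,w^{-1}]}\langle \lambda, zv\eta\rangle,
$$
obtained by combining Proposition~\ref{Demprodmax} with Lemma~\ref{seahorse} and the dominance of $\lambda$ (the same combination the paper uses in the first half of the proof of Theorem~\ref{first-pass}), makes it manifest that the entire inequality depends on $v$ only through $v\eta$, hence is literally identical for $v$ and $\pi^{(\eta)}(v)$. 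So the two arguments rest on the same core fact, but yours proves the stronger statement that the full family of inequalities is redundant down to the $W^{(\eta)}$-indexed subfamily, rather than arguing that the subfamily remains sufficient because the instability analysis only ever produces such $v$; this buys a self-contained proof independent of the provenance of $v$ in the GIT argument.
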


\section{Faces of $P_{\lambda}^w$} \label{facesandtwisteds}

We next want to understand the faces of the polytope $P_\lambda^w$. In general, faces are determined by fixing a collection of the defining inequalities of Theorem \ref{first-pass} as equalities. In the finite-dimensional reductive case \cite{BJK}, faces were explicitly described for maximal parabolic subgroups $P_i$--that is, the \emph{facets} of $P_\lambda^w$--although the same analysis holds for all faces. 

Fix $\eta$ an appropriately dominant cocharacter, and denote by $(\leq_\diamond, \ast_\diamond)$ the corresponding Bruhat order and Demazure product related to $\eta$ in the inequalities of $P_\lambda^w$. Fix also an element $v \in W$; by Corollary \ref{ineqcosetrep}, we make a prudent choice of coset representative and assume $v \in W^{(\eta)}$. With this data, we make the following definition for faces of $P_\lambda^w$.

\begin{definition} For $\eta$, $(\leq_\diamond, \ast_\diamond)$, and $v \in W^{(\eta)}$ as above, the face $\mathcal{F}(v, \eta)$ is given by 
$$
\mathcal{F}(v,\eta):= \left\{ \mu \in P_\lambda^w: \langle \lambda, (w^{-1}\ast_\diamond v) \eta \rangle = \langle \mu, v \eta \rangle \right\}.
$$
\end{definition}

We would like to understand the combinatorial structure of these faces. As a first pass, we can derive the following proposition, which relates vertices $q\lambda$, $q \leq w$, lying on a fixed face to certain translates of the stabilizers $W_\eta$. As it will have no effect on $V_\lambda^w$ or $P_\lambda^w$, for technical convenience, we assume that $w$ is the coset representative of \emph{maximal} length in $wW_\lambda$. Such a representative exists, since by assumption $\lambda$ is affine dominant and not a multiple of $\delta$. Therefore $W_\lambda$ is a finite parabolic subgroup. 

\begin{proposition} \label{faces-combinatorics}
Let $\eta, v$, and $\mathcal{F}(v, \eta)$ be as above. A vertex $q\lambda$ belongs to the face $\mathcal{F}(v, \eta)$ if and only if (for some choice of $\bar q\in qW_\lambda$)
$$
(\bar q)^{-1} \in (w^{-1} *_{\diamond}v) W(\eta) v^{-1}.
$$
\end{proposition}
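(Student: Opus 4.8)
The plan is to convert face-membership into a single equality of pairings and then read off the double-coset condition. Write $b := w^{-1}*_\diamond v$ and $a := q^{-1}v$. Since $\langle q\lambda, v\eta\rangle = \langle \lambda, q^{-1}v\eta\rangle = \langle\lambda, a\eta\rangle$, membership $q\lambda \in \mathcal{F}(v,\eta)$ is exactly the equality $\langle\lambda, a\eta\rangle = \langle\lambda, b\eta\rangle$. Because $q\le w$ forces $q^{-1}\in[e,w^{-1}]$, Proposition \ref{Demprodmax} gives $a \le_\diamond b$. On the target side I will unwind the coset condition: writing $\bar q = qh$ with $h\in W_\lambda$, the condition $\bar q^{-1}\in bW(\eta)v^{-1}$ is equivalent to $h^{-1}a\in bW(\eta)$, hence the existence of some $\bar q\in qW_\lambda$ is equivalent to $a = LbR$ for some $L\in W_\lambda$ and $R\in W(\eta)$ (taking $\bar q = qL$). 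Throughout I will use that $W_\lambda$ fixes $\lambda$ and that $W(\eta)$ fixes $\eta$; the latter holds in all three cases, being immediate when $W(\eta)=W_\eta$ and following from $(W_J)_{\text{af}}\subset W_\eta$ of Lemma \ref{stabilizer} in the finite-dominant case.

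The reverse implication is then a direct computation: if $a = LbR$ with $L\in W_\lambda$, $R\in W(\eta)$, then $\langle\lambda, a\eta\rangle = \langle\lambda, LbR\eta\rangle = \langle\lambda, Lb\eta\rangle = \langle L^{-1}\lambda, b\eta\rangle = \langle\lambda, b\eta\rangle$, using $R\eta = \eta$ and $L^{-1}\lambda = \lambda$. This gives the face equality, so $q\lambda\in\mathcal{F}(v,\eta)$.

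For the forward implication, which is the heart of the matter, I will exploit a saturated chain provided by $a\le_\diamond b$. Choose covers $a = x_0 \xrightarrow{\beta_1} x_1 \xrightarrow{\beta_2}\cdots \xrightarrow{\beta_k} x_k = b$, so $x_j = s_{\beta_j}x_{j-1}$ with $x_{j-1}^{-1}\beta_j \succ_\diamond 0$ by Lemma \ref{uniform}. Telescoping $x_{j-1}\eta - x_j\eta = \langle x_{j-1}^{-1}\beta_j, \eta\rangle\beta_j^\vee$ yields $a\eta - b\eta = \sum_j \langle x_{j-1}^{-1}\beta_j, \eta\rangle\beta_j^\vee$, and pairing with $\lambda$ produces the sum $\sum_j \langle x_{j-1}^{-1}\beta_j, \eta\rangle\langle\lambda, \beta_j^\vee\rangle$ of nonnegative terms (appropriate dominance of $\eta$, dominance of $\lambda$; this refines Lemma \ref{seahorse}) which vanishes by hypothesis. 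Hence every term is zero, so each step is of exactly one of two types: \emph{Type A}, where $\langle\lambda,\beta_j^\vee\rangle = 0$, so $s_{\beta_j}\in W_\lambda$ and $x_{j-1} = s_{\beta_j}x_j$ is left-multiplication by $W_\lambda$; or \emph{Type B}, where $\langle x_{j-1}^{-1}\beta_j, \eta\rangle = 0$, so the conjugated reflection $s_{x_{j-1}^{-1}\beta_j} = x_{j-1}^{-1}s_{\beta_j}x_{j-1}$ lies in $W(\eta)$ and $x_{j-1} = x_j\, s_{x_{j-1}^{-1}\beta_j}$ is right-multiplication by $W(\eta)$. Reading the chain from $b$ down to $a$ thus exhibits $a$ as a word of left-multiplications by $W_\lambda$ and right-multiplications by $W(\eta)$ applied to $b$; since left- and right-multiplications commute, I can collect them into $a = LbR$ with $L\in W_\lambda$, $R\in W(\eta)$, completing the proof with $\bar q = qL$.

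I expect the real work to lie in two places. First, in the finite-dominant case one must verify that the Type B reflections land in $W(\eta) = (W_J)_{\text{af}}$ rather than merely in the larger $W_\eta$: writing $\gamma := x_{j-1}^{-1}\beta_j = \alpha + k\delta$ with $\langle\gamma,\eta\rangle = 0$ and $\langle\delta,\eta\rangle = 0$ forces $\langle\alpha,\eta\rangle = 0$, i.e. $\alpha\in\mathring{\Phi}_J$, so $\gamma$ or $-\gamma$ lies in $(\Phi_J)^+_{\text{af}}$ and thus $s_\gamma\in(W_J)_{\text{af}}$; in the dominant and antidominant cases the corresponding statement is that every real reflection fixing $\eta$ lies in $W_\eta = W(\eta)$, which is the parabolic description of stabilizers in Proposition \ref{TitsStab}. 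Second, the sorting step deserves explicit bookkeeping: interleaved left- and right-multiplications separate cleanly precisely because the operations $\ell\cdot(x\cdot r) = (\ell x)\cdot r$ commute, and it is exactly this mechanism that converts the order-dependent chain into the order-free membership $a\in W_\lambda\, b\, W(\eta)$.
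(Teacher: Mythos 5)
Your proof is correct, and its engine is the same one the paper uses in the first of the three lemmas into which it factors this proposition: take a $\le_\diamond$-chain from $q^{-1}v$ up to $w^{-1}*_\diamond v$, pair the telescoped difference of $\eta$-translates against $\lambda$, use appropriate dominance to see a sum of nonnegative products that must vanish termwise, and conclude that each reflection in the chain lies either in $W_\lambda$ (acting on the left) or in the stabilizer of $\eta$ (acting on the right). Where you genuinely diverge is in how the stabilizer side is handled. The paper first proves the double-coset statement with the \emph{full} stabilizer $W_\eta$, and then needs a separate Lemma (\ref{smaller-double-cosets}) to shrink $W_\lambda w_1 W_\eta = W_\lambda w_2 W_\eta$ down to $W_\lambda w_1 W(\eta) = W_\lambda w_2 W(\eta)$ in the finite-dominant case; that lemma requires the hypothesis $\langle \lambda, K\rangle >0$ and an argument pairing against finite fundamental coweights to kill the translation part $\bt{\xi}$. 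You bypass this entirely by observing that the only elements of the stabilizer your chain ever produces are \emph{reflections} $s_\gamma$ with $\langle \gamma,\eta\rangle = 0$, and any real root orthogonal to a finite dominant $\eta$ has finite part in $\mathring{\Phi}_J$, so $s_\gamma$ already lies in $(W_J)_{\text{af}} = W(\eta)$ — the surplus of $W_\eta$ over $W(\eta)$ described in Lemma \ref{stabilizer} consists of translations, which never arise as reflections. This is a genuine streamlining: it makes Lemma \ref{smaller-double-cosets} unnecessary for this proposition and removes the (harmless, since it is a standing assumption) dependence on positive level at this step. What the paper's factored route buys in exchange is the standalone biconditional $W_\lambda w_1 W_\eta = W_\lambda w_2 W_\eta \iff \langle\lambda,w_1\eta\rangle = \langle\lambda,w_2\eta\rangle$ with the full stabilizer, which is of some independent interest. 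Two minor points you correctly did not need to address: the paper's third lemma also records that $\bar q$ can be chosen with $\bar q \le w$ under the maximal-length-representative convention, which is not part of the statement as given; and your reduction of the coset condition to $q^{-1}v \in W_\lambda\,(w^{-1}*_\diamond v)\,W(\eta)$ is an exact reformulation, so nothing is lost there.
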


The proposition follows at once from the next three lemmas. 

\begin{lemma}
Suppose $\lambda$ is a dominant weight and $\eta$ an appropriately dominant coweight. Let $w_1 \le_\diamond w_2$. Then 
$$
W_\lambda w_1 W_\eta = W_\lambda w_2 W_\eta \iff \langle \lambda, w_1 \eta \rangle = \langle \lambda, w_2 \eta\rangle
$$
\end{lemma}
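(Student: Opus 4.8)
The plan is to prove the two implications separately, with only the backward direction actually using the hypothesis $w_1 \le_\diamond w_2$. For the forward direction, suppose $W_\lambda w_1 W_\eta = W_\lambda w_2 W_\eta$, so $w_2 = a w_1 b$ with $a \in W_\lambda$ and $b \in W_\eta$. Then $b\eta = \eta$ and $a^{-1}\lambda = \lambda$, so by $W$-invariance of the pairing,
\[
\langle \lambda, w_2\eta\rangle = \langle \lambda, a w_1 b\,\eta\rangle = \langle a^{-1}\lambda, w_1\eta\rangle = \langle \lambda, w_1\eta\rangle .
\]
This uses nothing about the order $\le_\diamond$.

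For the backward direction, the idea is to refine $w_1 \le_\diamond w_2$ into a saturated chain of covering relations and show each single step preserves the double coset $W_\lambda\,\cdot\,W_\eta$. First I would fix a chain $w_1 = u_0, u_1, \dots, u_k = w_2$ with $u_{j+1} = s_{\beta_{j+1}} u_j$, $\beta_{j+1}\in\Phi^+_{re}$, and each $u_j <_\diamond u_{j+1}$ a covering step; such a left-reflection chain exists by the very definition of $\le_\diamond$ in the semi-infinite case, and by the (inversion-invariant) reflection-chain characterization of the standard and opposite orders. By Lemma \ref{uniform} we have $u_j^{-1}\beta_{j+1}\succ_\diamond 0$, and the reflection computation from the proof of Lemma \ref{seahorse} gives $u_j\eta - u_{j+1}\eta = \langle u_j^{-1}\beta_{j+1}, \eta\rangle\, \beta_{j+1}^\vee$. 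Since $\eta$ is appropriately dominant, $\langle u_j^{-1}\beta_{j+1},\eta\rangle \ge 0$; since $\lambda$ is dominant and $\beta_{j+1}^\vee$ is a positive coroot, $\langle\lambda,\beta_{j+1}^\vee\rangle\ge 0$.

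Pairing the telescoping sum $w_1\eta - w_2\eta = \sum_j (u_j\eta - u_{j+1}\eta)$ with $\lambda$ then yields
\[
\langle\lambda, w_1\eta\rangle - \langle\lambda, w_2\eta\rangle = \sum_{j=0}^{k-1} \langle u_j^{-1}\beta_{j+1},\eta\rangle\,\langle\lambda,\beta_{j+1}^\vee\rangle ,
\]
a sum of nonnegative terms. The hypothesis $\langle\lambda, w_1\eta\rangle = \langle\lambda, w_2\eta\rangle$ forces every summand to vanish, so for each $j$ either $\langle u_j^{-1}\beta_{j+1},\eta\rangle = 0$ or $\langle\lambda,\beta_{j+1}^\vee\rangle = 0$. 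In the first case $s_{u_j^{-1}\beta_{j+1}} = u_j^{-1} s_{\beta_{j+1}} u_j$ fixes $\eta$, hence lies in $W_\eta$, and $u_{j+1} = u_j\, s_{u_j^{-1}\beta_{j+1}} \in u_j W_\eta$; in the second case $s_{\beta_{j+1}}$ fixes $\lambda$, hence lies in $W_\lambda$, and $u_{j+1} = s_{\beta_{j+1}} u_j \in W_\lambda u_j$. Either way $W_\lambda u_j W_\eta = W_\lambda u_{j+1} W_\eta$, and composing these equalities from $j=0$ to $k-1$ gives the claim.

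The main obstacle I anticipate is purely bookkeeping rather than conceptual: guaranteeing the existence of a \emph{left}-reflection chain by positive real roots realizing $w_1 \le_\diamond w_2$ uniformly across all three orders (it is definitional for $\le_\semi$, and for $\le$ and $\le_-$ follows from inversion-invariance of the standard Bruhat order, which I would spell out carefully). A secondary point to record is that a reflection fixing $\lambda$ (respectively $\eta$) genuinely lies in $W_\lambda$ (respectively $W_\eta$); this is immediate here because these subgroups are the full stabilizers, but it is the step that converts the vanishing of a single summand into membership in a coset.
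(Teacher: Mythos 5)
Your proof is correct and is essentially the paper's argument: the paper proves the backward direction by induction on $l_\diamond(w_2)-l_\diamond(w_1)$, peeling off one reflection step $w_1 <_\diamond s_\gamma w_1 \le_\diamond w_2$ at a time and using the sandwich $\langle \lambda, w_1\eta\rangle \ge \langle\lambda, s_\gamma w_1 \eta\rangle \ge \langle\lambda, w_2\eta\rangle$ to force $\langle\lambda,\gamma\rangle\langle\gamma,w_1\eta\rangle=0$, whence $s_\gamma\in W_\lambda$ or the conjugated reflection lies in $W_\eta$. Your telescoping sum over a reflection chain is the unrolled form of exactly this induction, with the same dichotomy at each step, so there is no substantive difference.
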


\begin{proof}
The implication $\Rightarrow$ is immediate by the $W$-invariance of $\langle, \rangle$. 

For $\Leftarrow$, proceed by induction on $l_\diamond(w_2) - l_\diamond(w_1)$, the base case being trivial in that $w_1 = w_2$. 

For $w_1\ne w_2$, find some positive root $\gamma$ such that $w_1 <_\diamond s_\gamma w_1 \le_\diamond w_2$. Note that $w_1^{-1} \gamma\succ_\diamond 0$, so that $\langle w_1^{-1} \gamma, \eta \rangle \ge 0$. Now 
$$
\langle \lambda, w_1 \eta \rangle \ge \langle \lambda, s_\gamma w_1 \eta \rangle \ge \langle \lambda, w_2 \eta \rangle,
$$
but the outer pairings are assumed to be equal. Thus 
$$
\langle \lambda, w_1 \eta \rangle = \langle \lambda, s_\gamma w_1 \eta \rangle, 
$$
which implies that $\langle \lambda, \gamma \rangle \langle \gamma, w_1 \eta \rangle = 0$, and 
$$
\langle \lambda, s_\gamma w_1 \eta \rangle = \langle \lambda, w_2 \eta \rangle,
$$
so by induction $W_\lambda s_\gamma w_1 W_\eta = W_\lambda w_2 W_\eta$. 

Either $\langle \lambda, \gamma \rangle = 0$, in which case $s_\gamma \in W_\lambda$, or $\langle w_1^{-1} \gamma, \eta \rangle = 0$, in which case $s_{|w_1^{-1}\gamma|} \in W_\eta$. Either way, 
$$
W_\lambda s_\gamma w_1 W_\eta = W_\lambda w_1 W_\eta,
$$
from which the result follows. 
\end{proof}

When $\eta$ is finite dominant, we have seen that $W_\eta$ is unwieldy; we instead prefer to work with the combinatorially-approachable subgroup $W(\eta)$. This change is made possible by the following lemma.

\begin{lemma}\label{smaller-double-cosets}
Suppose $\eta$ is a finite dominant coweight and $\lambda$ is an affine dominant weight with $\langle \lambda, K \rangle >0$. Let $w_1\le_{\frac{\infty}{2}} w_2$. 
If 
$$
W_\lambda w_1 W_\eta = W_\lambda w_2 W_\eta,
$$
then 
$$
W_\lambda w_1 W(\eta) = W_\lambda w_2 W(\eta).
$$
\end{lemma}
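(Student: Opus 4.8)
The plan is to run the same inductive argument as in the previous lemma, but to track the finer structure of the connecting reflections so that they land in $W(\eta) = (W_J)_{\text{af}}$ rather than merely in $W_\eta$. First I would replace the double-coset hypothesis by its weight reformulation: by the previous lemma, $W_\lambda w_1 W_\eta = W_\lambda w_2 W_\eta$ is equivalent (for $w_1 \le_\semi w_2$) to $\langle \lambda, w_1\eta\rangle = \langle\lambda, w_2\eta\rangle$, and this scalar equality is what I will actually use. I then induct on $l_\semi(w_2) - l_\semi(w_1) \ge 0$, the base case $w_1 = w_2$ being trivial.

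For the inductive step, pick a positive real affine root $\gamma$ with $w_1 <_\semi s_\gamma w_1 \le_\semi w_2$, exactly as before. By Lemma \ref{uniform}, $w_1^{-1}\gamma \succ_\semi 0$, so $\langle w_1^{-1}\gamma, \eta\rangle \ge 0$, and since $\lambda$ is dominant the chain
$$
\langle \lambda, w_1\eta\rangle \ge \langle\lambda, s_\gamma w_1\eta\rangle \ge \langle\lambda, w_2\eta\rangle
$$
holds with equal outer terms, forcing all three to be equal. Equality of the first two gives $\langle\lambda,\gamma\rangle\,\langle\gamma, w_1\eta\rangle = 0$, while equality of the last two lets me apply the induction hypothesis to $s_\gamma w_1 \le_\semi w_2$, yielding $W_\lambda\, s_\gamma w_1\, W(\eta) = W_\lambda\, w_2\, W(\eta)$. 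It therefore remains only to show $W_\lambda\, s_\gamma w_1\, W(\eta) = W_\lambda\, w_1\, W(\eta)$, after which the two equalities combine to give the claim.

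The heart of the argument, and the step that genuinely goes beyond the previous lemma, is this last reduction, governed by the vanishing $\langle\lambda,\gamma\rangle\langle\gamma,w_1\eta\rangle = 0$. If $\langle\lambda,\gamma\rangle = 0$ then $s_\gamma \in W_\lambda$ and we are done on the left. The interesting case is $\langle\gamma, w_1\eta\rangle = \langle w_1^{-1}\gamma, \eta\rangle = 0$; writing $s_\gamma w_1 = w_1 s_{w_1^{-1}\gamma}$, I must show $s_{w_1^{-1}\gamma} \in W(\eta)$, not merely in $W_\eta$. Here I would use that $w_1^{-1}\gamma$ is a real root $\gamma' + k\delta$ with $\gamma' \in \mathring{\Phi}$, and that $\langle\delta,\eta\rangle = 0$ yields $\langle\gamma',\eta\rangle = \langle w_1^{-1}\gamma,\eta\rangle = 0$. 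Since $\eta$ is finite dominant with $J = \{i : \langle\alpha_i,\eta\rangle = 0\}$, the standard fact that a finite root orthogonal to a dominant coweight lies in the parabolic subsystem it generates forces $\gamma' \in \mathring{\Phi}_J$ (expand $\gamma' = \sum n_i\alpha_i$ with all $n_i$ of one sign; each summand $n_i\langle\alpha_i,\eta\rangle \ge 0$ must vanish, killing the coefficients outside $J$). Thus $s_{w_1^{-1}\gamma}$ is the reflection in an affine root whose finite part lies in $\mathring{\Phi}_J$, so by the presentation $(W_J)_{\text{af}} = \langle s_\beta : \beta \in (\Phi_J)^+_{\text{af}}\rangle$ it lies in $W(\eta)$. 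This gives $s_\gamma w_1 \in w_1 W(\eta)$, hence $W_\lambda\, s_\gamma w_1\, W(\eta) = W_\lambda\, w_1\, W(\eta)$, completing the induction. The main obstacle is precisely isolating this orthogonality fact and confirming that a reflection in $\mathring{\Phi}_J$ carrying an affine part $k\delta$ still keeps us inside $(W_J)_{\text{af}}$; everything else parallels the preceding lemma.
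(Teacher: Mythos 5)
Your proof is correct, but it takes a genuinely different route from the paper's. The paper argues globally on the connecting element: it writes $w_1 = x w_2 y$ with $x \in W_\lambda$ and $y \in W_\eta$, decomposes $y = {\bf t}_\xi z$ with $(\eta \,|\, \xi) = 0$ and $z \in W_J$ via Lemma \ref{stabilizer}, and then shows $(\omega_k^\vee \,|\, \xi) = 0$ for every $k \notin J$ --- hence ${\bf t}_\xi \in (W_J)_{\text{af}}$ --- by pairing against each finite fundamental coweight, using the semi-infinite inequalities $\langle \lambda, w_1 \omega_k^\vee\rangle \ge \langle \lambda, w_2\omega_k^\vee\rangle$ together with the level condition $\langle \lambda, K\rangle > 0$. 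You instead rerun the induction of the preceding (unnumbered) lemma along a chain of $\le_{\frac{\infty}{2}}$-covers and sharpen its dichotomy: in the case $\langle w_1^{-1}\gamma, \eta\rangle = 0$, you note that the finite part $\gamma'$ of the real root $w_1^{-1}\gamma = \gamma' + k\delta$ satisfies $\langle \gamma', \eta\rangle = 0$ (since $\langle \delta,\eta\rangle = 0$) and hence lies in $\mathring{\Phi}_J$ because $\eta$ is finite dominant, so that $s_{w_1^{-1}\gamma} \in \langle s_\beta : \beta \in (\Phi_J)^+_{\text{af}}\rangle = (W_J)_{\text{af}} = W(\eta)$ rather than merely in $W_\eta$. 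Both arguments are sound. Yours is more local and elementary: it bypasses Lemma \ref{stabilizer} entirely and never invokes $\langle \lambda, K\rangle >0$ (which only serves to exclude the degenerate case $\lambda \in \Q\delta$, where $W_\lambda = W$ and the claim is vacuous). The paper's version buys a sharper structural picture: it isolates exactly which part of a connecting element $y\in W_\eta$ could a priori escape $W(\eta)$ --- a translation ${\bf t}_\xi$ with $(\eta\,|\,\xi)=0$ but $\xi \notin \mathring{Q}^\vee_J$ --- and shows the positive level forbids it.
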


\begin{proof}
We may write $w_1 = x w_2 y$, where $x\in W_\lambda$ and $y \in W_\eta$. Let $J = \{ j\ge 1: \langle \alpha_j, \eta\rangle = 0\}$. By Lemma \ref{stabilizer} we may decompose $y = {\bf t}_\xi z$ where $(\eta | \xi) = 0$ and $z \in W_J$. We will show that $\xi \in \oplus_{j\in J} \Z \alpha_j^\vee$, so that $y \in W(\eta)$. 

By the assumption $w_1 \le_{\frac{\infty}{2}} w_2$, we have for any finite fundamental weight $\omega_k^\vee$ that 
$$
\langle \lambda, w_1 \omega_k^\vee \rangle \ge \langle \lambda, w_2 \omega_k^\vee\rangle.
$$
Write $\eta = \sum_{k \not\in J} c_k \omega_k^\vee$, where by definition each $c_k>0$. Since $\langle \lambda, w_1 \eta\rangle = \langle \lambda, w_2 \eta \rangle$, we must have $\langle \lambda, w_1 \omega_k^\vee \rangle = \langle \lambda, w_2 \omega_k^\vee \rangle$ for each $k \not \in J$. Fix an arbitrary $k\not\in J$. From $w_1 = x w_2 {\bf t}_{\xi} z$, we also have 
$$
\langle \lambda, w_1 \omega_k^\vee \rangle = \langle \lambda, w_2 {\bf t}_{\xi} \omega_k^\vee \rangle,
$$
by virtue of $z\in W_J$. Hence 
$$
\langle \lambda, w_2 \omega_k^\vee \rangle = \langle \lambda, w_2 {\bf t}_{\xi} \omega_k^\vee \rangle.
$$
As $\langle \delta, \omega_k^\vee \rangle = 0$, we have (cf. \cite{Kac}*{Eq. (6.5.5)})
$$
{\bf t}_{\xi} \omega_k^\vee = \omega_k^\vee - (\omega_k^\vee | \xi ) K. 
$$
We thus obtain 
$$
\langle w_2^{-1}\lambda, \omega_k^\vee \rangle = \langle w_2^{-1}\lambda, \omega_k^\vee \rangle - (\omega_k^\vee | \xi) \langle w_2^{-1} \lambda, K\rangle ,
$$
or
$$
0 =  (\omega_k^\vee | \xi) \langle w_2^{-1} \lambda, K\rangle .
$$
As $\langle w_2^{-1} \lambda, K\rangle = \langle \lambda, K\rangle >0$, we find that $(\omega_k^\vee | \xi) = 0$ for every $k\not\in J$, as claimed. This implies that $\xi \in \oplus_{j\in J} \Z\alpha_j^\vee$, thus ${\bf t}_\xi \in (W_J)_{\text{af}} = W(\eta)$. 
\end{proof}

\begin{lemma}
If $q\le w$ is such that $W_\lambda q^{-1}v W(\eta) = W_\lambda (w^{-1} *_\diamond v)W(\eta)$, then there is a $\bar q \in q W_\lambda$ such that 
$$
(\bar q)^{-1} \in (w^{-1} *_\diamond v)W(\eta) v^{-1}.
$$
Moreover, assuming $w$ is a maximal length coset representative, $\bar q \le w$. 
\end{lemma}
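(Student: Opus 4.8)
The plan is to unwind the double-coset hypothesis into an explicit factorization, read off a candidate $\bar q$, and then verify the length bound $\bar q \le w$ through a descent argument. I expect the first two steps to be purely formal and the length bound to be the only real content.

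First I would use the hypothesis $W_\lambda q^{-1}v\,W(\eta) = W_\lambda (w^{-1}\ast_\diamond v)\,W(\eta)$ to write
$$
q^{-1}v = a\,(w^{-1}\ast_\diamond v)\,b
$$
for some $a \in W_\lambda$ and $b \in W(\eta)$; equivalently $q^{-1} = a\,(w^{-1}\ast_\diamond v)\,b\,v^{-1}$. Setting $\bar q := qa$, we certainly have $\bar q \in qW_\lambda$, and
$$
(\bar q)^{-1} = a^{-1}q^{-1} = (w^{-1}\ast_\diamond v)\,b\,v^{-1} \in (w^{-1}\ast_\diamond v)\,W(\eta)\,v^{-1},
$$
which is exactly the required membership. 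This establishes the first assertion, with the explicit choice $\bar q = qa$.

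For the moreover statement it remains to show $qa \le w$ in the standard Bruhat order, and this is where the assumption that $w$ is the maximal-length representative of $wW_\lambda$ enters. Since $W_\lambda$ is a standard parabolic and $w$ is longest in its coset, every simple reflection $s_j$ generating $W_\lambda$ is a right descent of $w$, i.e. $ws_j < w$ (otherwise $ws_j$ would be a strictly longer element of $wW_\lambda$). The key input is the right-handed lifting property for the Bruhat order \cite{BjBr}: if $ws < w$ for a simple reflection $s$ and $x \le w$, then $xs \le w$ as well. Indeed, the case $xs < x$ gives $xs < x \le w$ trivially, and the case $xs > x$ is precisely the conclusion of the lifting property applied to $x \le w$ with $s \in D_R(w)\setminus D_R(x)$.

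Finally I would run an induction on $\ell(a)$ using a reduced expression $a = s_{j_1}\cdots s_{j_m}$ with each $s_{j_k}$ a simple generator of $W_\lambda$. Beginning from $q \le w$ and appending these reflections one at a time, each step preserves the relation $\le w$ by the displayed consequence of the lifting property, crucially because the fixed element $w$ retains \emph{every} $s_{j_k}$ as a right descent throughout the induction. Hence $\bar q = qa \le w$, completing the argument. The main obstacle is thus isolated in this last step: the length bound relies on correctly invoking the right-handed lifting property together with the descent characterization of maximal coset representatives, whereas the double-coset bookkeeping of the first step is routine once the factorization of $q^{-1}v$ is extracted.
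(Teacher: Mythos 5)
Your proof is correct. Note that the paper states this lemma without proof (it is the only one of the three lemmas feeding Proposition \ref{faces-combinatorics} whose proof is omitted), so there is nothing to compare against; your argument --- extracting $a\in W_\lambda$, $b\in W(\eta)$ from the double-coset equality, setting $\bar q = qa$, and then propagating $q\le w$ to $qa\le w$ one simple generator of $W_\lambda$ at a time using that every such generator is a right descent of the maximal-length coset representative $w$ together with the right-handed lifting property --- is exactly the routine argument the authors evidently had in mind. The one point worth making explicit is that $W_\lambda$ is a standard (and, since $\langle\lambda,K\rangle>0$, finite) parabolic subgroup by Proposition \ref{TitsStab}, which is what guarantees both that $a$ is a product of simple reflections and that the maximal-length representative exists and has all of them as right descents.
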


This concludes the proof of Proposition \ref{faces-combinatorics}. The determination of the face $\mathcal{F}(v,\eta)$ is now captured by the following task:

\begin{task} \label{task-farce}
Characterize the elements $q\le w$ such that 
$$
q^{-1} \in (w^{-1} *_{\diamond}v) W(\eta) v^{-1}. 
$$
That is, determine the intersection
$$
W(\eta) \cap \pi^{(\eta)}(w^{-1} \ast_\diamond v)^{-1} [e, w^{-1}] v.
$$
\end{task}

When $\eta$ is affine dominant, so that $W(\eta)=W_\eta$ is a finite parabolic subgroup and $(\leq_\diamond, \ast_\diamond)=(\leq, \ast)$ are the usual Bruhat order and Demazure product, the approach as in the finite case (\cite{BJK}*{Proposition 7.4}) applies mutatis mutandis and one can find that 
\begin{equation} \label{finiteinterval}
W_\eta \cap \pi^{(\eta)}(w^{-1} \ast v)^{-1} [e,w^{-1}] v = [e, \pi_{(\eta)}(w^{-1} \ast v)]
\end{equation}
as Bruhat intervals in $W_\eta$. However, even in the case when $\eta$ is affine antidominant and again $W(\eta)=W_\eta$, the naive analogue of (\ref{finiteinterval}) for $(\leq_{-}, \ast_{-})$ fails, as seen in the following example; see Corollary \ref{classicintersection} for the correct formulation for this particular class of example. 

\begin{example} 
Let $W$ be the affine Weyl group of type $A_3^{(1)}$, and set $\eta = -\Lambda_1^\vee-\Lambda_3^\vee$ so that $W(\eta):=\langle s_0, s_2\rangle$. Let $w = s_0s_3s_2s_1s_2s_0 \in W$ and fix $v:=e \in W^{(\eta)}$. Since $e$ is maximal with respect to the order $\leq_{-}$, we have that $w^{-1} \ast_{-} v = \pi^{(\eta)}(w^{-1} \ast_{-} v) = \pi_{(\eta)}(w^{-1} \ast_{-} v) = e$. However, we have as intervals in $W(\eta)$ that
$$
W(\eta) \cap [e, w^{-1}] = [e, s_0s_2] \neq \{e\}.
$$
\end{example}

Roughly speaking, the Demazure products $\ast_{-}$ and $\ast_{\semi}$ lose too much information when considering their interaction with $W(\eta)$ and $\pi_{(\eta)}(\cdot)$ for the combinatorial structure on the face $\mathcal{F}(v, \eta)$. Instead, we must utilize a finer combinatorial tool that remembers data from the coweight $\eta$: \emph{twisted Bruhat orders} and their associated Demazure products. These orders were first introduced by Dyer \cite{Dyer3, Dyer4} and have found applications to parabolic Kazhdan--Lusztig theory and the combinatorics of wonderful compactifications \cite{CD}. Using this approach, we complete our Task \ref{task-farce} by constructing a new element $z_\eta \in W$ satisfying $\pi^{(\eta)}(z_\eta)=\pi^{(\eta)}(w^{-1} \ast_\diamond v)$ and relate the corresponding intersection to the interval $[e, \pi_{(\eta)}(z_\eta)]$ in $W(\eta)$; this is, crucially, in general distinct from $[e, \pi_{(\eta)}(w^{-1} \ast_\diamond v)]$.

In the next section, we recall the twisted Bruhat orders associated to cocharacters, their defining length functions, and the related combinatorics. Many of the preliminary results of this section are the appropriate analogues of those for classical Bruhat order, stated uniformly across this more general framework. Then in Section \ref{hummingbirds} we introduce new classes of Demazure products associated to these orders and apply this machinery to understand the combinatorial structure of the faces in $P_\lambda^w$.

\section{Twisted Bruhat orders} \label{orderable-horrors}

We take as our standard references \cite{Dyer3}, \cite{Dyer4}; while twisted Bruhat orders are connected to the more general notions of initial sections of reflection orders, we restrict to the special case of those determined by appropriately dominant cocharacters (cf. \cite{Dyer4}*{(1.2)}).

\begin{definition} 
Let $\eta$ be an appropriately dominant cocharacter. Then we define a set of positive roots $A'_\eta$ and set of reflections $A_\eta$ via
$$
A'_\eta:=\{\beta \in \Phi^+_{re} | \langle \beta, \eta \rangle < 0 \},
$$
and 
$$
A_\eta:= \{s_\beta | \beta \in A'_\eta\}.
$$
\end{definition}

For a Weyl group element $w \in W$, recall the \emph{inversion set} 
$$
\Phi_w:=\Phi^+ \cap w(\Phi^-) = \{ \beta \in \Phi^+ | w^{-1}(\beta) \in \Phi^- \};
$$
note that necessarily $\Phi_w \subset \Phi^+_{re}$. Then it is classically known that $|\Phi_w| = l(w)$ (and similarly $|\Phi_{w^{-1}}| = l(w)$, as $l(w)=l(w^{-1})$). This gives an alternative approach to defining the length function on $W$ that avoids the reliance on reduced words. We can now introduce the twisted length functions (associated to $\eta$) by suitably modifying this relationship between length and inversion sets. 

\begin{definition} 
Let $\eta$ be an appropriately dominant cocharacter and $A'_\eta$ as before. Then the twisted length function $l_\eta: W \to \Z$ is given by 
$$
l_\eta(w):= l(w) - 2 | \Phi_{w^{-1}} \cap A'_\eta|.
$$
\end{definition}

Identically to the case of $l_\semi$ or more generally $l_\diamond$ from Section \ref{Orders and Demazure products}, once we have this length function $l_\eta$ we can again associate to it an ordering on $W$; these are what we refer to as twisted Bruhat orders.

\begin{definition} 
Let $\eta$ be an appropriately dominant cocharacter and $l_\eta(\cdot)$ the associated twisted length function as above. Then the twisted Bruhat order $\leq_\eta$ is a partial order on $W$ defined as follows: for any reflection $s_\beta \in W$ and arbitrary $v \in W$, we say that $v \xrightarrow{\beta} s_\beta v$ if and only if $l_\eta(s_\beta v)=l_\eta(v)+1$. For general $w,v \in W$, we say that $w\leq_\eta v$ if and only if there exist a sequence of reflections $s_{\beta_1},\dots, s_{\beta_k}$ (possibly empty) such that 
$$
w \xrightarrow{\beta_1} s_{\beta_1} w \xrightarrow{\beta_2} s_{\beta_2}s_{\beta_1} w \xrightarrow{\beta_3} \cdots \xrightarrow{\beta_k} s_{\beta_k}\cdots s_{\beta_1}w = v.
$$
\end{definition}

The benefit of these twisted Bruhat orders is that they give a generalization of the three Bruhat orders $\leq, \leq_{-}$, and $\leq_\semi$ on affine Weyl groups. Our definition for $A'_\eta$ differs from that in \cite{Dyer4} by a sign; this is so that $l_\eta(w)=l(w)$ and $\leq_\eta=\leq$ when $\eta$ is affine dominant. Indeed, $A'_\eta=\varnothing$ in that case. Further, if $\eta$ is affine antidominant \emph{regular}, then $A'_\eta=\Phi^+$, so in this case 
$$
l_\eta(w)=l(w)-2|\Phi_{w^{-1}} \cap \Phi^+| = l(w)-2l(w)=l_{-}(w),
$$
and thus $\leq_\eta=\leq_{-}$. Finally, if $\eta$ is finite dominant \emph{regular}--so that $\langle \alpha_i, \eta \rangle >0$ for all $i \neq 0$ and $\langle \delta, \eta \rangle=0$--then the following proposition is due to Dyer \cite{Dyer4}*{Proposition 1.14}.

\begin{proposition} 
For $\eta$ finite dominant regular, $l_\eta(w)=l_\semi(w)$ and $\leq_\eta=\leq_\semi$. 
\end{proposition}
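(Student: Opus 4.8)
The plan is to reduce the proposition to a single equality of length functions and then establish that equality by induction on standard length.

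\emph{Reduction.} By definition, both $\leq_\semi$ and $\leq_\eta$ are the reflexive--transitive closures of covering relations $v\xrightarrow{\beta}s_\beta v$, indexed by positive real roots $\beta\in\Phi^+_{re}$ (equivalently, by the reflections of $W$), where a cover is declared exactly when the ambient length function increases by $1$. Hence, once I show $l_\eta=l_\semi$ as functions on $W$, the two families of covering arrows become literally identical, and therefore so do the two partial orders. Thus the whole statement reduces to proving $l_\eta(w)=l_\semi(w)$ for every $w\in W$.

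\emph{Identifying $A'_\eta$.} Since $\langle\delta,\eta\rangle=0$, a real root $\beta=\alpha+k\delta$ satisfies $\langle\beta,\eta\rangle=\langle\alpha,\eta\rangle$, so its sign against $\eta$ is governed entirely by its finite part $\alpha$. Regularity of the finite dominant $\eta$ gives $\langle\alpha,\eta\rangle>0$ for $\alpha\in\mathring\Phi^+$ and $\langle\alpha,\eta\rangle<0$ for $\alpha\in\mathring\Phi^-$, with no vanishing; consequently, for $\beta\in\Phi^+_{re}$ one has $\beta\in A'_\eta\iff\beta\not\succ_\semi 0$, that is,
$$A'_\eta=\Phi^+_{re}\setminus\Phi^+_{\semi}.$$
This identity is the only point at which regularity is used.

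\emph{The induction.} I will prove that both length functions obey the same local rule under left multiplication by a simple reflection, namely
$$l_\diamond(s_iw)=l_\diamond(w)+1\iff w^{-1}\alpha_i\succ_\semi 0,\qquad(\star)$$
and that both vanish at $e$ (immediate, as $\Phi_e=\varnothing$). For $l_\semi$, relation $(\star)$ is Lemma \ref{uniform} (taking $\diamond=\semi$) together with Lemma \ref{lengthpm1}. For $l_\eta$, I would track how the inversion set changes under $w\mapsto s_iw$: the only positive root whose image-sign under $w$ flips is $\beta_0=\pm w^{-1}\alpha_i$ (with the sign making $\beta_0$ positive), so $\Phi_{(s_iw)^{-1}}$ is obtained from $\Phi_{w^{-1}}$ by adjoining $\beta_0$ when $w^{-1}\alpha_i\succ_+0$ and by deleting it when $w^{-1}\alpha_i\not\succ_+0$. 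Substituting into $l_\eta(w)=l(w)-2|\Phi_{w^{-1}}\cap A'_\eta|$ and using the identity above, the increment $l_\eta(s_iw)-l_\eta(w)$ equals $1-2[\,\beta_0\not\succ_\semi0\,]$ in the first case and $-1+2[\,\beta_0\not\succ_\semi0\,]$ in the second (writing $[\,P\,]$ for $1$ if $P$ holds and $0$ otherwise); since a real root and its negative have opposite semi-infinite signs, in both cases this increment is $+1$ precisely when $w^{-1}\alpha_i\succ_\semi0$, which is $(\star)$. Since any $w$ is reached from $e$ by successively removing the first letter of a reduced word, induction on $l(w)$ via $(\star)$ and the common value at $e$ yields $l_\eta=l_\semi$, completing the proof.

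\emph{Main obstacle.} The one genuinely computational step is the verification of $(\star)$ for $l_\eta$: one must simultaneously keep track of the standard sign of $w^{-1}\alpha_i$ (which dictates whether $\beta_0$ enters or leaves $\Phi_{w^{-1}}$, hence the $\pm1$ coming from the ordinary length) and its semi-infinite sign (which dictates membership of $\beta_0$ in $A'_\eta$, hence the factor of $2$), and confirm that these two effects combine into the single uniform criterion $w^{-1}\alpha_i\succ_\semi0$. Everything surrounding this computation is formal.
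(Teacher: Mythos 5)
Your proof is correct, but it is worth noting that the paper does not prove this proposition at all: it is quoted as Proposition 1.14 of Dyer's \emph{Quotients of twisted Bruhat orders}, so there is no internal argument to compare against. What you supply is a short, self-contained derivation from the paper's own definitions, and it holds up: the identification $A'_\eta=\Phi^+_{re}\setminus\Phi^+_{\semi}$ is exactly right for $\eta$ finite dominant regular (it is the statement $A'_\eta=\Omega_4$ with $J=\varnothing$ that the paper uses later in the proof of Proposition \ref{lengthidentification}); the reduction of the equality of orders to the equality of length functions is legitimate because both $\leq_\eta$ and $\leq_{\semi}$ are defined as transitive closures of the same family of arrows, declared by a length increment of exactly $1$; and your verification of $(\star)$ is sound, since left multiplication by $s_i$ alters $\Phi_{w^{-1}}$ only in the single element $\pm w^{-1}\alpha_i$ and the two contributions (the $\pm 1$ from $l$ and the $\mp 2$ from membership in $A'_\eta$) do combine into the uniform criterion $w^{-1}\alpha_i\succ_{\semi}0$, which by Lemmas \ref{uniform} and \ref{lengthpm1} is also the criterion governing $l_{\semi}$. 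One small point worth making explicit if you write this up: your induction only compares $l_\eta$ and $l_{\semi}$ across \emph{simple} reflections, which suffices because every $w$ is reached from $e$ by simple left multiplications, whereas the covering relations of both orders are indexed by arbitrary reflections; this is harmless since you equate the length functions globally first and only then compare the orders. Your argument has the virtue of making the paper self-contained at a point where it currently leans on an external reference, at the modest cost of redoing a computation Dyer has already carried out in greater generality.
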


\noindent With this perspective, we can alternatively refer to the three orders up to now denoted $\leq_\diamond$ as the \emph{regular $\eta$ orders}, as these arise collectively when $\eta$ is appropriately dominant and regular. 

When $\eta$ is not regular, the associated length function $l_\eta(\cdot)$ and order $\leq_\eta$ need not be one of the previous orders. Nevertheless, the twisted Bruhat orders $\leq_\eta$ and length functions $l_\eta(\cdot)$ share key properties with $(l_\diamond, \leq_\diamond)$; the first of these is one of the most crucial for our purposes. Namely, $(l_\eta, \leq_\eta)$ satisfies the diamond lemma, as proven by Dyer \cite{Dyer3}*{Proposition 1.9}.

\begin{proposition} \label{twisteddiamond}
Let $\eta$ be an appropriately dominant cocharacter. Then the twisted Bruhat order $\leq_\eta$ satisfies the diamond lemma (as in Lemma \ref{diamond}).
\end{proposition}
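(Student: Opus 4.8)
The plan is to situate $\leq_\eta$ inside Dyer's general theory of twisted Bruhat orders attached to initial sections of reflection orders (equivalently, biclosed subsets of $\Phi^+_{re}$), for which the diamond lemma is exactly \cite{Dyer3}*{Proposition 1.9}. The only thing requiring genuine verification on our end is that $A'_\eta$ is such a set; everything else is bookkeeping matching our conventions to Dyer's.

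First I would record the twisted analogue of Lemma \ref{uniform}: for $w \in W$ and $\beta \in \Phi^+_{re}$, one has $w \le_\eta s_\beta w$ if and only if $w^{-1}\beta$ lies in the ``$\eta$-twisted positive system'' $\Phi^+_\eta := (\Phi^+_{re}\setminus A'_\eta)\sqcup(-A'_\eta)$, i.e. $\beta$ is not a twisted inversion of $w$. This is a direct computation comparing $l_\eta(s_\beta w)$ and $l_\eta(w)$ through the inversion-set formula $l_\eta(w) = l(w) - 2|\Phi_{w^{-1}} \cap A'_\eta|$, and it specializes correctly to the three regular cases (for $A'_\eta = \varnothing$ it recovers $w^{-1}\beta \succ 0$, and for $A'_\eta = \Phi^+_{re}$ it recovers $w^{-1}\beta \succ_- 0$). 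The same computation yields the twisted analogue of Lemma \ref{lengthpm1}, namely $l_\eta(sw) = l_\eta(w) \pm 1$ for simple $s$, which guarantees comparability of $w$ and $sw$.

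The crucial structural input is that $A'_\eta$ is \emph{biclosed}: both $A'_\eta$ and $\Phi^+_{re}\setminus A'_\eta$ are closed under addition of roots. This is immediate from linearity of the pairing, since $\langle \beta_1+\beta_2, \eta\rangle = \langle\beta_1,\eta\rangle + \langle\beta_2,\eta\rangle$ forces the sign whenever both summands pair strictly negatively (resp. non-negatively) with $\eta$. Biclosedness is precisely the condition that $A'_\eta$ is an initial section of a reflection order in the sense of \cite{Dyer4}*{(1.2)}, and it ensures that $A'_\eta$ restricts to an initial section on every rank-two reflection subsystem --- the property that drives the dihedral reductions. With this in hand, $\leq_\eta$ is a twisted Bruhat order of the type covered by Dyer, and the diamond lemma is his Proposition 1.9.

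For a self-contained argument one would instead run the usual inductive proof of the lifting property (as for $\leq$) with $\Phi^+$ replaced throughout by $\Phi^+_\eta$: reduce, via the strong exchange condition in the twisted setting, to the dihedral reflection subgroup generated by $s$ and the reflection witnessing $w <_\eta v$, and check alternatives (a) and (b) of Lemma \ref{diamond} directly in rank two. The main obstacle I expect is exactly this dihedral reduction --- verifying that the twisted inversion statistics restrict correctly to rank-two subsystems and that the sign conventions defining $\Phi^+_\eta$ are compatible with the covering relations on the nose. This is where biclosedness does the essential work, and it is the step that would consume most of the effort; the rest is the same case analysis on whether $s$ is an $\eta$-ascent or $\eta$-descent of $w$ and of $v$ that proves the classical lemma.
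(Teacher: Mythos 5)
Your proposal follows essentially the same route as the paper, which gives no independent argument but observes that $\le_\eta$ is a twisted Bruhat order in Dyer's framework (the sets $A'_\eta$ determined by appropriately dominant cocharacters being among the explicit examples of initial sections of reflection orders in \cite{Dyer4}*{(1.2)}) and then quotes the diamond lemma from \cite{Dyer3}*{Proposition 1.9}. One small caution: your claim that biclosedness is \emph{precisely} the condition for being an initial section of a reflection order is, in general, an open conjecture of Dyer rather than a theorem, so it is safer to obtain the initial-section property for $A'_\eta$ directly from the cocharacter construction in \cite{Dyer4}*{(1.2)} as the paper does; this does not affect the correctness of your conclusion.
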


Further, the length functions $l_\eta(\cdot)$ can be obtained by a modification of the ``regular" case by remembering data from the vanishing of $\eta$. We make this precise, as recorded in the following proposition.

\begin{proposition} \label{lengthidentification}

Let $\eta$ be an appropriately dominant cocharacter, and let $l_\diamond(\cdot)$ be the regular length function of the same type (that is, $l(\cdot)$, $l_-(\cdot)$, or $l_\semi(\cdot)$ according to $\eta$ affine dominant, affine antidominant, or finite dominant). Then for any $w \in W$,

$$
l_\eta(w)=l_\diamond(\pi^{(\eta)}(w))+l_{W(\eta)}(\pi_{(\eta)}(w)),
$$
where $l_{W(\eta)}(\cdot)$ is the length function on $W(\eta)$ in its Coxeter presentation. 
\end{proposition}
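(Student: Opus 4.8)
I would prove the identity by computing $l_\eta(w)$ through inversion sets. Recall $\Phi_{w^{-1}} = \{\beta \in \Phi^+_{re} : w\beta \in \Phi^-\}$, so $l(w) = |\Phi_{w^{-1}}|$ and by definition $l_\eta(w) = |\Phi_{w^{-1}}| - 2|\Phi_{w^{-1}}\cap A'_\eta|$. A short preliminary check (case by case on the type of $\eta$) shows that in all three situations $\{\beta \in \Phi^+_{re} : \langle\beta,\eta\rangle = 0\} = \Phi^+_\eta$, so $\Phi^+_{re}$ splits as $\Phi^+_\eta \sqcup A'_\eta \sqcup B'_\eta$ with $B'_\eta := \{\beta \in \Phi^+_{re} : \langle\beta,\eta\rangle > 0\}$. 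Writing $v := \pi^{(\eta)}(w)$ and $u := \pi_{(\eta)}(w)$, I split the count accordingly:
$$
l_\eta(w) = |\Phi_{w^{-1}}\cap\Phi^+_\eta| + \left(|\Phi_{w^{-1}}\cap B'_\eta| - |\Phi_{w^{-1}}\cap A'_\eta|\right),
$$
and the plan is to identify the first summand with $l_{W(\eta)}(u)$ and the parenthesized summand $T$ with $l_\diamond(v)$.

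\textbf{The $\Phi^+_\eta$-piece.} For $\beta \in \Phi^+_\eta$ I analyze $w\beta = v(u\beta)$. Since $u \in W(\eta)$ is a product of reflections $s_\gamma$ with $\gamma \in \Phi_\eta$, it preserves the subsystem $\Phi_\eta$; since $v \in W^{(\eta)}$ sends $\Phi^+_\eta$ into $\Phi^+$, a direct sign-chase gives $w\beta \in \Phi^-$ if and only if $u\beta \in -\Phi^+_\eta$. Hence $|\Phi_{w^{-1}}\cap\Phi^+_\eta|$ equals the number of positive roots of $\Phi_\eta$ inverted by $u$. By Dyer's description of $W(\eta)$ as a reflection subgroup with canonical Coxeter generators and positive system $\Phi^+_\eta$ (cf. \cite{Dyer1} and the discussion around Definition \ref{etagroupsnotation}), this count is precisely $l_{W(\eta)}(u)$.

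\textbf{The complementary piece.} For $T := |\Phi_{w^{-1}}\cap B'_\eta| - |\Phi_{w^{-1}}\cap A'_\eta|$ I use the standard cocycle relation expressing $\Phi_{(vu)^{-1}}$ in terms of $\Phi_{u^{-1}}$ and $u^{-1}\Phi_{v^{-1}}$, together with $u\eta = \eta$ from Lemma \ref{stabilizer}, which guarantees that $u$ preserves the sign of $\langle\,\cdot\,,\eta\rangle$. Because $v \in W^{(\eta)}$ inverts no root of $\Phi^+_\eta$, the analogous signed count for $v$ is exactly $l_\eta(v)$, and the discrepancy collapses to $S(u) := |\Phi_{u^{-1}}\cap B'_\eta| - |\Phi_{u^{-1}}\cap A'_\eta|$; the claim of this step is $T = l_\eta(v) + S(u)$. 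It then remains to see (i) $S(u) = 0$ and (ii) $l_\eta(v) = l_\diamond(v)$ for $v \in W^{(\eta)}$. For (ii) I would perturb to a regular $\eta_0$ of the same type whose sign pattern refines that of $\eta$, so that $A'_{\eta_0} \supseteq A'_\eta$ with $A'_{\eta_0}\setminus A'_\eta \subseteq \Phi^+_\eta$; since $v$ inverts no root of $\Phi^+_\eta$ this yields $l_\eta(v) = l_{\eta_0}(v)$, and $l_{\eta_0} = l_\diamond$ by the regular identifications recalled above (the semi-infinite case being \cite{Dyer4}*{Proposition 1.14}).

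\textbf{Main obstacle.} The crux is the vanishing $S(u) = 0$. In the affine dominant and antidominant cases $W(\eta) = W_J$ is a finite parabolic subgroup, so $u$ inverts only roots of $\Phi^+_\eta$ and $S(u) = 0$ trivially. The real difficulty is the finite dominant case, where $W(\eta) = (W_J)_{\text{af}} = W_J \ltimes \mathring{Q}_J^\vee$ contains translations that genuinely invert roots outside $\Phi_\eta$. Here I would write $u = z\,\bt{\xi}$ with $z \in W_J$ and $\xi \in \mathring{Q}_J^\vee$, and compute the signed count one finite root-family $\{\gamma + k\delta\}$ at a time: grouping $\gamma$ with $-\gamma$ and using that $z$ permutes $\mathring{\Phi}^+ \setminus \mathring{\Phi}_J^+$, the net contribution of each pair collapses to $\langle\gamma,\xi\rangle$, so that $S(u) = \langle 2\rho - 2\rho_J, \xi\rangle$. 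This vanishes because $\rho$ and the Levi Weyl vector $\rho_J$ agree on $\mathring{Q}_J^\vee$ (both pair to $1$ with every $\alpha_j^\vee$, $j \in J$). Assembling the three contributions then gives $l_\eta(w) = l_{W(\eta)}(u) + l_\diamond(v)$, as claimed.
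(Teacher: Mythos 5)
Your argument is correct, and it reaches the identity by a genuinely different organization of essentially the same computations as the paper. The paper writes out only the finite dominant case in detail (deferring the two parabolic cases to references): it first converts to the regular semi-infinite length via $l_\eta(w) = l_\semi(w) + 2|\Phi_{w^{-1}}\cap\Omega_3|$, invokes Peterson's additivity $l_\semi(w) = l_\semi(\pi^{(\eta)}(w)) + l_\semi(\pi_{(\eta)}(w))$ for the factorization $W=(W^J)_{\text{af}}(W_J)_{\text{af}}$, shows that the $\Omega_3$-inversions of $w$ coincide with those of $\pi_{(\eta)}(w)$, and finally proves $l_\eta = l_{(W_J)_{\text{af}}}$ on $(W_J)_{\text{af}}$ via the cancellation $|\Phi_{u^{-1}}\cap\Omega_2| = |\Phi_{u^{-1}}\cap\Omega_4|$. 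You instead split $l_\eta(w)$ at the outset into the $\Phi^+_\eta$-inversion count plus a signed count over the complement; your identification of the first piece with $l_{W(\eta)}(\pi_{(\eta)}(w))$ is the paper's $\Omega_3$ argument extended to all of $\Phi^+_\eta$ together with Lemma \ref{PetLength}, and your $S(u)=\langle 2\rho-2\rho_J,\xi\rangle=0$ is exactly the paper's $\Omega_2$-versus-$\Omega_4$ cancellation. The genuinely new ingredient is the cocycle step: additivity of the signed count under $w=vu$ because $u$ fixes $\eta$ and hence preserves the sign of $\langle\cdot,\eta\rangle$, plus the perturbation to a regular $\eta_0$; together these replace the paper's appeal to Peterson's $l_\semi$-additivity and let you treat all three types of $\eta$ uniformly. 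The one step you should write out in full is the identity $N(vu)=N(v)+N(u)$ for the signed count: the set-theoretic relation between $\Phi_{(vu)^{-1}}$, $\Phi_{u^{-1}}$ and $u^{-1}\Phi_{v^{-1}}$ carries a correction term for roots inverted by $u$ but un-inverted by $v$, and it is precisely the antisymmetry of the sign function under $\beta\mapsto-\beta$ that makes the signed version cleanly additive; as sketched this is plausible but not yet a proof.
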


Before coming to a proof, which will focus on the relationship between finite dominant $\eta$ and the semi-infinite length function, we make a few remarks. First, observe that when $\eta$ is affine dominant we have as mentioned above $l_\eta(\cdot)=l(\cdot)$, and also that $W(\eta)=W_\eta$ is a finite standard parabolic subgroup and $W^{(\eta)}=W^\eta$ the minimum-length coset representatives. In this case $l_{W(\eta)}(\cdot)=l(\cdot)$ is the usual length function restricted to $W_\eta$. All together, for any $w \in W$, Proposition \ref{lengthidentification} simply recovers the classical property of minimum-length representatives
$$ 
l(w)=l(\pi^{(\eta)}(w))+l(\pi_{(\eta)}(w)).
$$

When $\eta$ is affine antidominant, again $W(\eta)=W_\eta$ is a finite parabolic subgroup. Here, Proposition \ref{lengthidentification} would say that, for $w \in W$,
\begin{equation} \label{antidomlengthrel}
l_\eta(w)=l_{-}(\pi^{(\eta)}(w))+l_{W(\eta)}(\pi_{(\eta)}(w))=-l(\pi^{(\eta)}(w))+l(\pi_{(\eta)}(w)).
\end{equation}
In this setting, (\ref{antidomlengthrel}) was proven by Dyer \cite{Dyer3}*{Lemma 5.3} (see also \cite{CD}*{Lemma 4.1}). 

Now, let $\eta$ be a finite dominant cocharacter with vanishing $J$. Then $W(\eta)=(W_J)_{\text{af}}$, which as we saw in Section \ref{Stabilizer section} is a reflection group given by $\langle s_\beta | \beta \in (\Phi_J)^+_{\text{af}} \rangle$. Furthermore, this is itself a Coxeter group, although its presentation as such is \emph{not} inherited from the standard Coxeter system for $W$. As all we will need is the associated length function, we record the following lemma from Peterson \cite{Pet}*{Lecture 13}.

\begin{lemma} \label{PetLength}
As a Coxeter group, $(W_J)_\text{af}$ has length function 
$$
l_{(W_J)_\text{af}}(w)= | \{ \beta \in (\Phi_J)^+_{\text{af}} \ | w(\beta) \in \Phi^-\}| = | (\Phi_J)^+_{\text{af}} \cap \Phi_{w^{-1}}|.
$$
\end{lemma}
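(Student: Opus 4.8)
The plan is to recognize Lemma \ref{PetLength} as the specialization to the reflection subgroup $(W_J)_{\text{af}} \le W$ of Dyer's general formula \cite{Dyer1} for the length function of a reflection subgroup in terms of inversions taken within its root subsystem. Recall from the discussion preceding the lemma that $W' := (W_J)_{\text{af}} = \langle s_\beta \mid \beta \in (\Phi_J)^+_{\text{af}} \rangle$ is a reflection subgroup of $W$, with associated root subsystem $\Phi' := (\Phi_J)^+_{\text{af}} \cup \left(-(\Phi_J)^+_{\text{af}}\right)$ and positive system $\Phi'^+ := (\Phi_J)^+_{\text{af}}$, and that by Dyer it carries a canonical Coxeter structure $(W', S')$ whose simple reflections are the $s_\beta$ with $\beta$ ranging over the $\Phi'$-\emph{simple} (indecomposable) elements of $\Phi'^+$ -- namely those $\beta$ for which $s_\beta$ permutes $\Phi'^+ \setminus \{\beta\}$ and keeps it inside $\Phi^+$. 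Writing $n'(w) := |\{\beta \in \Phi'^+ \mid w(\beta) \in \Phi^-\}| = |\Phi'^+ \cap \Phi_{w^{-1}}|$ for the candidate function, the goal is to identify $n'$ with the Coxeter length $l_{W'}$ on $(W', S')$.

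The first concrete step I would carry out is the transformation rule for $n'$ under right multiplication by a canonical simple reflection $s_\beta \in S'$. Since $\beta$ is $\Phi'$-simple, $s_\beta$ restricts to a bijection of $\Phi'^+ \setminus \{\beta\}$ while sending $\beta \mapsto -\beta$. Splitting $\Phi'^+ = \{\beta\} \sqcup (\Phi'^+ \setminus \{\beta\})$ and reindexing the second block by this bijection, the contributions of the roots $\gamma \neq \beta$ to $n'(ws_\beta)$ and to $n'(w)$ coincide, so the two counts differ only in the $\beta$-term: this term contributes to $n'(ws_\beta)$ exactly when $w(\beta) \in \Phi^+$ and to $n'(w)$ exactly when $w(\beta) \in \Phi^-$. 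These conditions are complementary, giving
\[
n'(ws_\beta) = n'(w) + 1 \text{ if } w(\beta) \in \Phi^+, \qquad n'(ws_\beta) = n'(w) - 1 \text{ if } w(\beta) \in \Phi^-.
\]
This is precisely the behavior of a Coxeter length function under right multiplication by a simple generator.

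To conclude, I would first note that $n'(w) = 0$ forces $w$ to preserve $\Phi'^+$, hence $w = e$, since $W'$ acts simply transitively on the positive systems of its own root system $\Phi'$. Then, taking a reduced word $w = s_{\beta_1}\cdots s_{\beta_k}$ in $(W', S')$ and tracking $n'$ along the partial products $w_j = s_{\beta_1}\cdots s_{\beta_j}$ starting from $n'(e) = 0$, each step changes $n'$ by $\pm 1$; matching this against the fact that each step is an $l_{W'}$-ascent yields $n'(w) = k = l_{W'}(w)$. The main obstacle is exactly this last matching: one must know that an $l_{W'}$-ascent at a canonical simple reflection corresponds to ambient positivity $w(\beta) \in \Phi^+$. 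This compatibility is the technical core of Dyer's reflection-subgroup theorem, and once granted the induction is immediate. I would therefore either cite Dyer \cite{Dyer1} (equivalently Peterson \cite{Pet}*{Lecture 13}, and \cite{LS}*{Lemma 10.5}) for this compatibility, or reprove it from the subsystem cocycle identity expressing the inversion set $N'(uv)$ as the symmetric difference of $N'(v)$ and $v^{-1}N'(u)$, combined with simple transitivity, which forces $n'$ to satisfy the exchange condition characterizing Coxeter length.
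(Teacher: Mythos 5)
The paper offers no proof of this lemma at all: it is recorded verbatim as a quotation from Peterson's lecture notes (\cite{Pet}*{Lecture 13}), with the Coxeter structure on $(W_J)_{\text{af}}$ itself attributed to Dyer \cite{Dyer1} in the preceding discussion. Your proposal therefore does strictly more than the paper, and its outline is correct. The computation of $n'(ws_\beta)$ is right: for a canonical generator $s_\beta$ the involution $s_\beta$ fixes $\Phi'^+\setminus\{\beta\}$ setwise, so reindexing shows the counts for $\gamma\ne\beta$ agree and only the $\beta$-term toggles, giving $n'(ws_\beta)=n'(w)\pm 1$ according to the sign of $w(\beta)$; combined with $n'(e)=0$ this already yields $n'(w)\le l_{W'}(w)$. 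You correctly isolate the one non-elementary ingredient — that an $l_{W'}$-ascent at a canonical generator is equivalent to the ambient positivity $w(\beta)\in\Phi^+$ — and attribute it to Dyer; this is exactly the content of the identification of $\Phi'=(\Phi_J)_{\text{af}}$ with the abstract root system of the Coxeter system $(W',S')$, and citing it is no weaker than what the paper does. One small caveat: your base-case argument ($n'(w)=0\implies w(\Phi'^+)=\Phi'^+\implies w=e$ by simple transitivity on positive systems of $\Phi'$) also quietly relies on that same identification, so it is not an independent elementary step; it is subsumed by the same citation. With that citation granted, the induction along a reduced word closes the argument, so the proof is complete.
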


With this in hand, we can now turn to the proof of Proposition \ref{lengthidentification}.

\begin{proof}[Proof of Proposition \ref{lengthidentification}]
Let $\eta$ be an appropriately dominant cocharacter. For $\eta$ affine dominant or antidominant, see the preceding discussion and references therein. We focus here on $\eta$ finite dominant. To that end, recall by definition that for any $w \in W$, we set 
$$
l_\eta(w):= l(w)-2 | \Phi_{w^{-1}} \cap A'_\eta|,
$$
where $A'_\eta=\{\beta \in \Phi^+_{re} | \langle \beta, \eta \rangle <0 \}$. Let $\eta$ have vanishing $J$, so that $\langle \alpha_i, \eta \rangle =0$ if and only if $i \in J$, $\langle \alpha_i, \eta \rangle >0 $ for $i \not \in J$, and $\langle \delta, \eta \rangle=0$. Then explicitly we have 
$$
A'_\eta =\{ -\gamma+n \delta | \gamma \in \mathring{\Phi}^+ \backslash \mathring{\Phi}^+_J, \ n > 0 \}. 
$$

We partition $\Phi^+_{re}$ into four disjoint sets: 
$$
\begin{aligned}
\Omega_1&:=\{ \alpha+n \delta | \alpha \in \mathring{\Phi}^+_J, \ n \geq 0\}, \\
\Omega_2&:=\{ \alpha+n \delta | \alpha \in \mathring{\Phi}^+ \backslash \mathring{\Phi}^+_J, \ n \geq 0\}, \\
\Omega_3&:=\{ - \alpha+n \delta | \alpha \in \mathring{\Phi}^+_J, \ n >0\}, \\
\Omega_4&:=\{ - \alpha+n \delta | \alpha \in \mathring{\Phi}^+ \backslash \mathring{\Phi}^+_J, \ n >0\}. 
\end{aligned}
$$
Note that we have $A'_\eta = \Omega_4$ and that $\Omega_1 \sqcup \Omega_3 = (\Phi_J)^+_{\text{af}}$. Then we compute $l_\eta(w)$ by 
$$
l_\eta(w) = l(w) - 2| \Phi_{w^{-1}} \cap \Omega_4| = l(w)-2|\Phi_{w^{-1}} \cap (\Omega_3 \sqcup \Omega_4)| +2 |\Phi_{w^{-1}} \cap \Omega_3|.
$$
But by our discussion on \emph{regular} orders--and in particular for finite regular dominant cocharacters--we get that 
$$
l(w)-2| \Phi_{w^{-1}} \cap (\Omega_3 \sqcup \Omega_4)| = l_\semi(w),
$$
thus we have $l_\eta(w)=l_\semi(w)+2|\Phi_{w^{-1}} \cap \Omega_3|$. 

Using the interaction of the unique factorization $W=(W^J)_\text{af} (W_J)_\text{af}$ and $l_\semi(\cdot)$ (cf. \cite{Pet}*{Lecture 13}), we have
$$
l_\semi(w)=l_\semi(\pi^{(\eta)}(w))+l_\semi(\pi_{(\eta)}(w))
$$
so that 
$$
l_\eta(w) = l_\semi(\pi^{(\eta)}(w))+l_\semi(\pi_{(\eta)}(w)) + 2| \Phi_{w^{-1}} \cap \Omega_3|.
$$

Consider now the set $\Phi_{w^{-1}} \cap \Omega_3$. By definition, this is precisely the set of positive real roots $\beta \in \Omega_3$ such that $w(\beta) \in \Phi^-$. This we rewrite as
$$
\pi^{(\eta)}(w) \left(\pi_{(\eta)}(w) (\beta)\right) \in \Phi^-.
$$
But $\pi_{(\eta)}(w)(\beta)$ is of the form $\gamma+k \delta$ for some $\gamma \in \mathring{Q}_J$, since $\pi_{(\eta)}(w) \in (W_J)_\text{af}$ and $\beta \in \Omega_3 \subset (\Phi_J)^+_\text{af}$. Suppose that this is a positive root. Then $\pi_{(\eta)}(w)(\beta) \in (\Phi_J)^+_\text{af}$, forcing 
$$
w(\beta)=\pi^{(\eta)}(w) \left(\pi_{(\eta)}(w) (\beta)\right) \in \Phi^+,
$$
a contradiction. Thus $\beta \in \Phi_{w^{-1}} \cap \Omega_3$ forces $\beta \in \Phi_{\pi_{(\eta)}(w)^{-1}} \cap \Omega_3$. The opposite inclusion $\Phi_{\pi_{(\eta)}(w)^{-1}} \cap \Omega_3 \subseteq \Phi_{w^{-1}} \cap \Omega_3$ is similar. So we get 
$$
 \Phi_{w^{-1}} \cap \Omega_3 = \Phi_{\pi_{(\eta)}(w)^{-1}} \cap \Omega_3.
$$

Thus, we have 
$$
\begin{aligned}
l_\eta(w) &=  l_\semi(\pi^{(\eta)}(w))+l_\semi(\pi_{(\eta)}(w)) + 2| \Phi_{w^{-1}} \cap \Omega_3| \\
&= l_\semi(\pi^{(\eta)}(w))+l_\semi(\pi_{(\eta)}(w)) +2 | \Phi_{\pi_{(\eta)}(w)^{-1}} \cap \Omega_3 | \\
&=  l_\semi(\pi^{(\eta)}(w)) + l_\eta(\pi_{(\eta)}(w)),
\end{aligned}
$$
by applying the connection (valid for any $w$) between $l_\eta(\cdot)$ and $l_\semi(\cdot)$ from above to $\pi_{(\eta)}(w)$. In total, we will have proven Proposition \ref{lengthidentification} if we can show that $l_\eta(\cdot) = l_{(W_J)_\text{af}}(\cdot)$ when restricted to $(W_J)_\text{af}$.

To this end, fix $v \in (W_J)_\text{af}$. Again exploiting the definition of $l_\eta(\cdot)$, we have 
$$
\begin{aligned}
l_\eta(v) &= l(v) - 2 | \Phi_{v^{-1}} \cap \Omega_4| \\
&= |\Phi_{v^{-1}} \cap \Omega_1| + |\Phi_{v^{-1}} \cap \Omega_2 | + |\Phi_{v^{-1}} \cap \Omega_3| - | \Phi_{v^{-1}} \cap \Omega_4|,
\end{aligned}
$$
using the partition of $\Phi^+_{re}$ and that $l(v)=| \Phi_{v^{-1}} \cap \Phi^+_{re}|$. By Lema \ref{PetLength}, we can rewrite this as 
$$
l_\eta(v) = l_{(W_J)_\text{af}}(v) + | \Phi_{v^{-1}} \cap \Omega_2 | - | \Phi_{v^{-1}} \cap \Omega_4|.
$$

Finally, we can conclude the proof by showing for any $v \in (W_J)_\text{af}$ that 
$$
|\Phi_{v^{-1}} \cap \Omega_2| = | \Phi_{v^{-1}} \cap \Omega_4|.
$$
Write $v=y \bt{\xi}$ in the presentation $(W_J)_\text{af}=W_J \ltimes \mathring{Q}^\vee_J$. Fix a positive root $\beta \in \Omega_2$; recall that this means we can write 
$$
\beta=\alpha+n\delta, \ \alpha \in \mathring{\Phi}^+ \backslash \mathring{\Phi}^+_J, \ n \geq 0.
$$
Applying $v$, we get 
$$
v(\beta)= y \bt{\xi}(\alpha+n\delta) = y(\alpha)+ \left(n-\langle \alpha, \xi \rangle \right)\delta.
$$
Since $y \in W_J$ and $\alpha \in \mathring{\Phi}^+ \backslash \mathring{\Phi}^+_J$, $y(\alpha)$ is a positive root. Therefore we have 
$$
\Phi_{v^{-1}} \cap \Omega_2 = \{\beta \in \Omega_2 | v(\beta) \in \Phi^-\} = \{\alpha+n\delta \in \Omega_2 | n- \langle \alpha, \xi \rangle <0\}.
$$
In particular, we get for such an $\alpha+n \delta$ that $0 \leq n < \langle \alpha, \xi \rangle$, and any such $n$ will work. In total, this gives 
$$
| \Phi_{v^{-1}} \cap \Omega_2| = \sum_{\substack{\alpha \in \mathring{\Phi}^+ \backslash \mathring{\Phi}^+_J \\ \langle \alpha, \xi \rangle >0}} \langle \alpha, \xi \rangle.
$$

By similar considerations, 
$$
\Phi_{v^{-1}} \cap \Omega_4 = \{ -\alpha+n\delta \in \Omega_4 | -y(\alpha) + \left( n+ \langle \alpha, \xi \rangle \right) \delta \in \Phi^-\},
$$
forcing $0<n \leq -\langle \alpha, \xi \rangle$ and therefore we have 
$$
|\Phi_{v^{-1}} \cap \Omega_4| = \sum_{\substack{\alpha \in \mathring{\Phi}^+ \backslash \mathring{\Phi}^+_J \\ \langle \alpha, \xi \rangle <0}} - \langle \alpha, \xi \rangle.
$$

In total, we have $|\Phi_{v^{-1}} \cap \Omega_2| = | \Phi_{v^{-1}} \cap \Omega_4|$ if and only if 
$$
\sum_{\substack{\alpha \in \mathring{\Phi}^+ \backslash \mathring{\Phi}^+_J \\ \langle \alpha, \xi \rangle >0}} \langle \alpha, \xi \rangle = \sum_{\substack{\alpha \in \mathring{\Phi}^+ \backslash \mathring{\Phi}^+_J \\ \langle \alpha, \xi \rangle <0}} - \langle \alpha, \xi \rangle,
$$
or equivalently (as including those $\alpha$ with $\langle \alpha, \xi \rangle =0$ does not change the sum) if and only if
$$
\sum_{\alpha \in \mathring{\Phi}^+ \backslash \mathring{\Phi}^+_J} \langle \alpha, \xi \rangle =0.
$$

Finally, we rewrite 
$$
\sum_{\alpha \in \mathring{\Phi}^+ \backslash \mathring{\Phi}^+_J} \langle \alpha, \xi \rangle = \left \langle \left( \sum_{\alpha \in \mathring{\Phi}^+ \backslash \mathring{\Phi}^+_J} \alpha \right), \xi \right \rangle;
$$
we claim that this is zero. Indeed, since $\mathring{\Phi}^+ \backslash \mathring{\Phi}^+_J$ is a $W_J$-invariant set, the summation $\sum_{\alpha \in \mathring{\Phi}^+ \backslash \mathring{\Phi}^+_J} \alpha$ is $W_J$-fixed. This forces the pairing of this sum against any simple coroot $\alpha_i^\vee \in \mathring{Q}_J^\vee$ to be zero. Since $\xi \in \mathring{Q}_J^\vee$, this concludes the proof. 
\end{proof}

The next pair of lemmas gives relations between the orders $\leq_\eta$ and the coset-representative maps $\pi^{(\eta)}(\cdot)$. 

\begin{lemma} \label{cosetineqclaim1} For all $w \in W$, $\pi^{(\eta)}(w) \leq_\eta w$. 
\end{lemma}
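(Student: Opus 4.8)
The plan is to construct an explicit chain of covering relations in $\leq_\eta$ ascending from $\pi^{(\eta)}(w)$ to $w$, using the length identity of Proposition \ref{lengthidentification} to certify each step. First I would set $x := \pi^{(\eta)}(w) \in W^{(\eta)}$ and $u := \pi_{(\eta)}(w) \in W(\eta)$, so that $w = xu$ is the unique factorization from Definition \ref{etagroupsnotation}, and fix a reduced expression $u = r_1 r_2 \cdots r_m$ in the Coxeter group $W(\eta)$, with $m = l_{W(\eta)}(u)$ and each $r_i = s_{\gamma_i}$ a reflection, $\gamma_i \in \Phi^+_\eta \subset \Phi^+_{re}$.

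Next I would consider the partial products $w_j := x\, r_1 \cdots r_j$ for $0 \le j \le m$, so that $w_0 = x$ and $w_m = w$. Since $x \in W^{(\eta)}$ and $r_1 \cdots r_j \in W(\eta)$, uniqueness of the factorization $W = W^{(\eta)} W(\eta)$ forces $\pi^{(\eta)}(w_j) = x$ and $\pi_{(\eta)}(w_j) = r_1 \cdots r_j$; as any prefix of a reduced word is itself reduced, $l_{W(\eta)}(r_1 \cdots r_j) = j$. Proposition \ref{lengthidentification} then gives
$$
l_\eta(w_j) = l_\diamond(x) + j,
$$
whence $l_\eta(w_j) = l_\eta(w_{j-1}) + 1$ for each $j$.

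To recognize each step as a legitimate cover in $\leq_\eta$ (which is defined via \emph{left} multiplication by reflections), I would use the conjugation identity: setting $\beta_j := w_{j-1}(\gamma_j)$, a real root, one has
$$
w_j = w_{j-1} s_{\gamma_j} = \left(w_{j-1} s_{\gamma_j} w_{j-1}^{-1}\right) w_{j-1} = s_{\beta_j} w_{j-1}.
$$
Together with $l_\eta(s_{\beta_j} w_{j-1}) = l_\eta(w_{j-1}) + 1$, this is exactly the covering relation $w_{j-1} \xrightarrow{\beta_j} w_j$. Stringing these together yields $x = w_0 \xrightarrow{\beta_1} \cdots \xrightarrow{\beta_m} w_m = w$, establishing $\pi^{(\eta)}(w) \leq_\eta w$ by definition of the twisted Bruhat order.

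The hard part will be the length bookkeeping that pins each increment $l_\eta(w_j) - l_\eta(w_{j-1})$ to exactly $+1$, and this is precisely where Proposition \ref{lengthidentification} is indispensable: when $\eta$ is not regular the standard length $l(w_j)$ may jump by more than one across a Coxeter generator of $W(\eta)$, so only the additivity of $l_\eta$ across the $W^{(\eta)} W(\eta)$ factorization keeps the chain tight. The remaining ingredients—uniqueness of the factorization and the conjugation identity converting right multiplication by generators of $W(\eta)$ into left multiplication by reflections of $W$—are purely formal.
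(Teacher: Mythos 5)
Your proposal is correct and follows essentially the same route as the paper's proof: the same factorization $w = \pi^{(\eta)}(w)\,\pi_{(\eta)}(w)$, the same partial products $w_j$ along a reduced word in $W(\eta)$, the same appeal to Proposition \ref{lengthidentification} to certify unit length increments, and the same conjugation converting right multiplication by Coxeter generators of $W(\eta)$ into left multiplication by reflections of $W$. The only cosmetic difference is that the paper additionally checks the conjugated roots $w_{j-1}(\gamma_j)$ are positive (using $\pi^{(\eta)}(w)\in W^{(\eta)}$ and reducedness of the word), which is harmless to omit since the covering relation depends only on the reflection and the length condition.
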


\begin{proof}
Write by unique factorization $w=\pi^{(\eta)}(w)\pi_{(\eta)}(w)$. In $W(\eta)$, write 
$$
\pi_{(\eta)}(w)=s_{\gamma_1}\cdots s_{\gamma_k}
$$
as a reduced word in Coxeter generators $\{s_{\gamma_i}\}$ of $W(\eta)$. For $0 \leq j \leq k$, we set 
$$
w_j:=\pi^{(\eta)}(w)s_{\gamma_1} \cdots s_{\gamma_j}.
$$
By unique factorization, we have $\pi^{(\eta)}(w_j)=\pi^{(\eta)}(w)$ and $\pi_{(\eta)}(w_j)=s_{\gamma_1} \cdots s_{\gamma_j}$. Further, by Proposition \ref{lengthidentification}, we have 
$$
l_\eta(w_j)=l_\diamond (\pi^{(\eta)}(w))+j.
$$

Now, we rewrite $w_j = s_{\beta_1}\cdots s_{\beta_j} \pi^{(\eta)}(w)$, where $\beta_r:= \pi^{(\eta)}(w) \left(\gamma_r \right) \in \Phi^+_{re}$, as $\gamma_r \in \Phi^+_\eta$ and of course $\pi^{(\eta)}(w) \in W^{(\eta)}$. Then we can now write 
$$
w_{j+1}=s_\varepsilon w_j,
$$
where $\varepsilon:= s_{\beta_1}\cdots s_{\beta_j}\left(\beta_{j+1}\right) = \pi^{(\eta)}(w) s_{\gamma_1} \cdots s_{\gamma_j}(\gamma_{j+1})$; note that $\varepsilon \in \Phi^+_{re}$, since $\pi^{(\eta)}(w) \in W^{(\eta)}$ and by assumption $\pi_{(\eta)}(w)=s_{\gamma_1} \cdots s_{\gamma_k}$ was a reduced expression in Coxeter generators, which forces $s_{\gamma_1}\cdots s_{\gamma_j}(\gamma_{j+1}) \in \Phi^+_\eta$. 

Thus we have a sequence of covering relations in $\leq_\eta$ as follows:
$$
\pi^{(\eta)}(w)=w_0 <_\eta w_1 <_\eta w_2 <_\eta \cdots <_\eta w_k=w,
$$
proving the lemma. 
\end{proof}

\begin{lemma} \label{cosetineqclaim2}
Fix $u \in W^{(\eta)}$ a coset representative, and let $w \in W$ such that $u \leq_\eta w$. Then $u \leq_\eta \pi^{(\eta)}(w)$. 
\end{lemma}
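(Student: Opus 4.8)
The plan is to induct on $\ell := l_{W(\eta)}(\pi_{(\eta)}(w))$, the length of the $W(\eta)$-component of $w$; together with Lemma \ref{cosetineqclaim1} this yields the clean equivalence $u \le_\eta w \iff u \le_\eta \pi^{(\eta)}(w)$ for $u \in W^{(\eta)}$. When $\ell = 0$ we have $w = \pi^{(\eta)}(w)$ and there is nothing to prove. For $\ell \ge 1$, since $\pi_{(\eta)}(w) \ne e$ it admits a right descent in the Coxeter presentation of $W(\eta)$: a Coxeter generator $\sigma$ of $W(\eta)$ with $l_{W(\eta)}(\pi_{(\eta)}(w)\sigma) = \ell - 1$. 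Writing $w = \pi^{(\eta)}(w)\,\pi_{(\eta)}(w)$, right multiplication by $\sigma$ preserves the coset, so uniqueness of factorization gives $\pi^{(\eta)}(w\sigma) = \pi^{(\eta)}(w)$ and $\pi_{(\eta)}(w\sigma) = \pi_{(\eta)}(w)\sigma$. By Proposition \ref{lengthidentification}, $l_\eta(w\sigma) = l_\diamond(\pi^{(\eta)}(w)) + (\ell-1) = l_\eta(w) - 1$, so $w\sigma <_\eta w$. Thus if we can show $u \le_\eta w\sigma$, then since $w\sigma$ has strictly smaller $W(\eta)$-length and $\pi^{(\eta)}(w\sigma) = \pi^{(\eta)}(w)$, the inductive hypothesis gives $u \le_\eta \pi^{(\eta)}(w\sigma) = \pi^{(\eta)}(w)$, as desired.

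It remains to carry out the single descent step: if $u \in W^{(\eta)}$, $u \le_\eta w$, and $w\sigma <_\eta w$ for a Coxeter generator $\sigma$ of $W(\eta)$, then $u \le_\eta w\sigma$. This mirrors the classical parabolic-quotient argument and needs two ingredients. The first is the interaction of right multiplication with $W^{(\eta)}$: for any $u \in W^{(\eta)}$ and any Coxeter generator $\sigma$ of $W(\eta)$ one has $u <_\eta u\sigma$. Indeed $\pi^{(\eta)}(u\sigma) = u$ and $\pi_{(\eta)}(u\sigma) = \sigma$, so Proposition \ref{lengthidentification} gives $l_\eta(u\sigma) = l_\diamond(u) + 1 = l_\eta(u) + 1$. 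The second is a lifting property for $\le_\eta$ under right multiplication by $\sigma$: given $u \le_\eta w$ with $w\sigma <_\eta w$, either $u \le_\eta w\sigma$, or else $u\sigma \le_\eta w\sigma$ together with $u\sigma <_\eta u$. Granting this dichotomy, the second alternative is incompatible with $u <_\eta u\sigma$, forcing $u \le_\eta w\sigma$ and completing the step.

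The main obstacle is establishing the right-multiplication lifting property just invoked. The covering relations of $\le_\eta$ are defined through \emph{left} multiplication by reflections, and the diamond lemma available to us (Proposition \ref{twisteddiamond}, via Lemma \ref{diamond}) concerns left multiplication by the simple reflections $s \in S$ of the ambient group $W$. In the finite dominant case, however, the Coxeter generators $\sigma$ of $W(\eta) = (W_J)_{\text{af}}$ are reflections $s_\gamma$ with $\gamma \in \Phi^+_\eta$ that need not be simple in $W$ (for instance the affine generator of $(W_J)_{\text{af}}$), and right multiplication by $\sigma$ equals left multiplication by the $w$-dependent conjugate reflection $s_{w\gamma}$. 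Consequently the stated diamond lemma does not apply directly, and a naive left-reflection lifting fails to match the hypothesis on $u$: it controls $s_{w\gamma}u$ rather than the relevant $u\sigma = s_{u\gamma}u$.

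The resolution is to extract the right-handed lifting property from Dyer's structural theory of twisted Bruhat orders \cite{Dyer3, Dyer4}, which provides the strong exchange and lifting properties for arbitrary reflections. The compatibility of $\le_\eta$ with the \emph{right} $W(\eta)$-action is exactly what Proposition \ref{lengthidentification} encodes: it shows that right multiplication by any Coxeter generator $\sigma$ of $W(\eta)$ changes $l_\eta$ by precisely $\pm 1$, so $\sigma$ behaves as a genuine simple reflection for $\le_\eta$ on the right. Verifying that $(W,\le_\eta)$ therefore carries a right $W(\eta)$-Coxeter action satisfying the lifting axiom — so that right multiplication by $\sigma$ admits the dichotomy used above — is the technical heart of the proof; the surrounding induction is then routine.
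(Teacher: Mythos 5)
Your proposal is correct and follows essentially the same route as the paper: induct so as to strip one Coxeter generator $\sigma$ of $W(\eta)$ at a time off the right of $w$, observe via Proposition \ref{lengthidentification} that $w\sigma <_\eta w$ while $u <_\eta u\sigma$ for $u \in W^{(\eta)}$, and then invoke a right-handed diamond/lifting property for $\le_\eta$ with respect to these (not necessarily simple) reflections to conclude $u \le_\eta w\sigma$. The paper likewise does not prove that right-handed lifting property but defers it to Dyer (\cite{Dyer4}, (1.19) and Example 1.20(iv)), exactly as you propose.
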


\begin{proof}
We induct on $l_\eta(w)-l_\eta(u)$, the base case being trivial. 

If $\pi^{(\eta)}(w)=w$, then we are done. Else, as in the proof of Lemma \ref{cosetineqclaim1}, there exists some Coxeter generator $s_\gamma \in W(\eta)$ such that $w s_\gamma <_\eta w$ (where, previously, we denoted $ws_\gamma := w_{k-1}$). Also, as $u \in W^{(\eta)}$, note that $u <_\eta us_\gamma$. Finally, by an appropriate ``right-handed" variation of the diamond lemma  (or the \textit{Z-property}) we can conclude that 
$$
u \leq_\eta w, \ u \leq_\eta us_\gamma, \ ws_\gamma \leq_\eta w \implies u \leq_\eta ws_\gamma.
$$
Therefore by induction $u \leq_\eta \pi^{(\eta)}(ws_\gamma)=\pi^{(\eta)}(w)$.
\end{proof}

\begin{remark}
We do not supply details for this right-handed diamond lemma, proven by Dyer \cite{Dyer3}, which is applicable for a certain subset of (not necessarily simple) reflections. We refer to \cite{Dyer4}*{(1.19) and Example 1.20(iv)} to apply this result in our setting for $s_\gamma \in W(\eta)$ a Coxeter generator. 
\end{remark}

Taken together, Lemmas \ref{cosetineqclaim1} and \ref{cosetineqclaim2} give the following proposition. 

\begin{proposition} \label{ineqdescent}

The map $\pi^{(\eta)}$ is $\leq_\eta$-order preserving; that is, let $u, w \in W$ such that $u \leq_\eta w$. Then $\pi^{(\eta)}(u) \leq_\eta \pi^{(\eta)}(w)$.
\end{proposition}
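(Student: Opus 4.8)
The plan is to obtain the proposition as an immediate consequence of the two preceding lemmas, chained together using the transitivity of the twisted Bruhat order $\leq_\eta$ (which holds because $\leq_\eta$ is, by construction, a genuine partial order: the defining sequences of covering relations concatenate). So I would fix $u, w \in W$ with $u \leq_\eta w$ and produce $\pi^{(\eta)}(u) \leq_\eta \pi^{(\eta)}(w)$ in two short steps.

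First I would descend on the left. Applying Lemma \ref{cosetineqclaim1} to the element $u$ gives $\pi^{(\eta)}(u) \leq_\eta u$. Combining this with the hypothesis $u \leq_\eta w$ and transitivity yields
$$
\pi^{(\eta)}(u) \leq_\eta w.
$$
The point of this step is to replace $u$ by its coset representative $\pi^{(\eta)}(u) \in W^{(\eta)}$ while preserving the comparison with $w$.

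Second, I would descend on the right. Since $\pi^{(\eta)}(u)$ is by construction an element of $W^{(\eta)}$, it is a legitimate choice of distinguished coset representative, so Lemma \ref{cosetineqclaim2} applies verbatim with $\pi^{(\eta)}(u)$ playing the role of the fixed representative and $w$ the general element. As we have just shown $\pi^{(\eta)}(u) \leq_\eta w$, the lemma delivers
$$
\pi^{(\eta)}(u) \leq_\eta \pi^{(\eta)}(w),
$$
which is precisely the desired conclusion.

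I do not anticipate a genuine obstacle here: all of the substantive content is already packaged into Lemmas \ref{cosetineqclaim1} and \ref{cosetineqclaim2}, the latter being where the real work lives (it invokes the right-handed diamond lemma of Dyer for the generators $s_\gamma \in W(\eta)$). The only points requiring any care in the write-up are confirming that $\leq_\eta$ is transitive so that the two comparisons may be chained, and checking the trivial but essential fact that $\pi^{(\eta)}(u)$ genuinely lies in $W^{(\eta)}$ so that Lemma \ref{cosetineqclaim2} is applicable to it.
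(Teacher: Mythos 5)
Your proposal is correct and follows exactly the same two-step argument as the paper: Lemma \ref{cosetineqclaim1} gives $\pi^{(\eta)}(u) \leq_\eta u \leq_\eta w$, and then Lemma \ref{cosetineqclaim2} applied to the coset representative $\pi^{(\eta)}(u) \in W^{(\eta)}$ yields the conclusion. The additional care you take in noting transitivity and the membership $\pi^{(\eta)}(u) \in W^{(\eta)}$ is sound but implicit in the paper's version.
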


\begin{proof}
Fix $u \in W$ and consider $w \in W$ with $u \leq_\eta w$. By Lemma \ref{cosetineqclaim1}, we know that 
$$
\pi^{(\eta)}(u) \leq_\eta u \leq_\eta w,
$$
so that $\pi^{(\eta)}(u) \leq_\eta w$. Now apply Lemma \ref{cosetineqclaim2} to get $\pi^{(\eta)}(u) \leq_\eta \pi^{(\eta)}(w)$. 
\end{proof}

Finally, we conclude this section with an alternative characterization of $\leq_\eta$, as given in \cite{Dyer4}*{Definition 1.11} (see also \cite{Dyer3}*{Proposition 1.2}) specialized to our case of cocharacter-twisted orders. We will make use of this characterization in the proof of Theorem \ref{face-intersection-1}.

\begin{proposition} \label{altorder}
Fix an appropriately dominant cocharacter $\eta$, and let $w \in W$ be arbitrary and $s_\gamma \in W$ a reflection for some positive real root $\gamma \in \Phi^+_{re}$. Then $\leq_\eta$ is the unique partial order on $W$ satisfying
$$
s_\gamma w \leq_\eta w \iff \gamma \in \Phi_w \oplus w(A'_\eta),
$$
where $\oplus$ denotes the symmetric set difference.
\end{proposition}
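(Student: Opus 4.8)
The plan is to reduce the stated biconditional to a statement purely about the sign of the twisted length difference, and then to pin that sign down. I would first rewrite the right-hand condition explicitly. Since both $\Phi_w$ and $A'_\eta$ consist of positive real roots, a positive real root $\gamma$ can lie in $w(A'_\eta)$ only if $w^{-1}\gamma\in A'_\eta\subset\Phi^+_{re}$; hence a short case analysis on the sign of $w^{-1}\gamma$ gives
$$
\gamma\in\Phi_w\oplus w(A'_\eta)\iff w^{-1}\gamma\in\Phi^-_{re}\ \text{ or }\ \big(w^{-1}\gamma\in\Phi^+_{re}\ \text{and}\ \langle w^{-1}\gamma,\eta\rangle<0\big).
$$
Thus the target becomes the equivalence between $s_\gamma w\le_\eta w$ and this explicit condition on $w^{-1}\gamma$. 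As a sanity check, when $\eta$ is affine dominant (so $A'_\eta=\varnothing$) the second disjunct is vacuous and this recovers the classical $s_\gamma w\le w\iff w^{-1}\gamma\in\Phi^-$.

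Next I would split the claim into two parts. The first is that $w$ and $s_\gamma w$ are always $\le_\eta$-comparable, with the element of smaller $l_\eta$ lying below; this follows exactly as in the classical Bruhat setting from the diamond lemma (Proposition \ref{twisteddiamond}) together with the definition of $\le_\eta$ as the transitive closure of its $l_\eta$-increasing reflection covers. Granting this, the proposition reduces to the length-sign statement
$$
l_\eta(s_\gamma w)<l_\eta(w)\iff \gamma\in\Phi_w\oplus w(A'_\eta).
$$
To attack this I would use the signed form of the twisted length: writing $\epsilon_\eta(\beta)=-1$ for $\beta\in A'_\eta$ and $\epsilon_\eta(\beta)=+1$ otherwise, the definition gives $l_\eta(w)=\sum_{\beta\in\Phi_{w^{-1}}}\epsilon_\eta(\beta)$, equivalently the normalized count $l_\eta(w)=|\Phi_{w^{-1}}\oplus A'_\eta|-|A'_\eta|$. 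The decisive structural input is that $A'_\eta=\{\beta\in\Phi^+_{re}:\langle\beta,\eta\rangle<0\}$ is \emph{biclosed}: since $\langle\,\cdot\,,\eta\rangle$ is linear, both $A'_\eta$ and its complement in $\Phi^+_{re}$ are closed under taking root sums, which is exactly the hypothesis under which the twisted length behaves like an honest length function.

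The main obstacle is the length-sign statement itself: a reflection $s_\gamma$ changes both $l$ and $l_\eta$ by an odd amount that is in general larger than $1$, so the sign cannot be read off a single covering relation. I would control it by analyzing the finite sum $l_\eta(s_\gamma w)-l_\eta(w)=\sum_{\mu\in w\Phi^+_{re}}\epsilon_\eta(w^{-1}\mu)\big([s_\gamma\mu<0]-[\mu<0]\big)$, which is supported on the roots $\mu$ whose sign is flipped by $s_\gamma$, namely $\pm\Phi_{s_\gamma}$. Using the fixed-point-free involution $\mu\mapsto -s_\gamma\mu$ on $\Phi_{s_\gamma}\setminus\{\gamma\}$ together with the biclosedness of $A'_\eta$, the paired contributions cancel and the sign of the whole expression is governed by the single root $\gamma$, yielding precisely the explicit condition above; alternatively this is Dyer's computation for biclosed sets (\cite{Dyer3}, \cite{Dyer4}), of which our $A'_\eta$ is the special case attached to a cocharacter. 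For uniqueness, finally: the biconditional prescribes the direction of every reflection-edge $\{x,s_\gamma x\}$ and in particular of every $l_\eta$-covering relation; since $\le_\eta$ is by construction the transitive closure of its covers, any partial order satisfying the biconditional and generated by these reflection-covers must agree with $\le_\eta$, which is the uniqueness asserted in \cite{Dyer4}*{Definition 1.11}.
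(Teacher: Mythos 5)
The paper offers no proof of this proposition: it is quoted directly from Dyer (\cite{Dyer4}*{Definition 1.11}, see also \cite{Dyer3}*{Proposition 1.2}), specialized to the cocharacter-defined sets $A'_\eta$, so your fallback clause ``alternatively this is Dyer's computation'' is in fact the paper's entire argument. Your explicit unwinding of the right-hand condition is correct and worth keeping: for $\gamma\in\Phi^+_{re}$ the conditions $\gamma\in\Phi_w$ and $\gamma\in w(A'_\eta)$ are mutually exclusive (the first forces $w^{-1}\gamma\in\Phi^-$, the second $w^{-1}\gamma\in\Phi^+$), so the symmetric difference is just the union, and your dominant-$\eta$ sanity check is right.

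The self-contained argument you sketch, however, has two genuine gaps. First, the assertion that ``the paired contributions cancel'' under the involution $\mu\mapsto -s_\gamma\mu$ is false: already in the untwisted case $A'_\eta=\varnothing$ with $w=e$, every pair $\{\mu,\,-s_\gamma\mu\}$ in $\Phi_{s_\gamma}\setminus\{\gamma\}$ contributes $+2$ to $l(s_\gamma)-l(e)=l(s_\gamma)$, not $0$. What is true, and what actually requires proof, is that each pair's contribution has the same sign as the contribution of $\gamma$ itself whenever it is nonzero; this uses the relation $\mu+(-s_\gamma\mu)=\langle\mu,\gamma^\vee\rangle\gamma$ together with closedness of $\Phi^+_{re}$ and the linearity of $\langle\cdot,\eta\rangle$, and that argument is the real content of the sign statement. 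Second, and more seriously, your reduction of $s_\gamma w\le_\eta w$ to $l_\eta(s_\gamma w)<l_\eta(w)$ --- the claim that $w$ and $s_\gamma w$ are always $\le_\eta$-comparable with the element of smaller twisted length below, i.e.\ joined by a chain of length-one reflection steps --- is precisely the nontrivial part of Dyer's theorem. It does not ``follow exactly as in the classical setting from the diamond lemma'': classically this is the chain/subword property, proved via strong exchange, and the diamond lemma for $\le_\eta$ (Proposition \ref{twisteddiamond}) is itself established by Dyer downstream of this characterization, so invoking it here is circular. A minor further point: the ``normalized count'' $|\Phi_{w^{-1}}\oplus A'_\eta|-|A'_\eta|$ is $\infty-\infty$ whenever $\eta$ is not affine dominant, since $A'_\eta$ is then infinite; the finite signed sum $\sum_{\beta\in\Phi_{w^{-1}}}\epsilon_\eta(\beta)$ is the formulation to use.
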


\section{Combinatorial structure on faces of $P_w^\lambda$} \label{hummingbirds}

In this section, we prove the primary combinatorial results which allow us to understand the faces of the Demazure weight polytope $P_\lambda^w$. To do this, we first introduce (in analogy with the orders $\leq_\diamond$) a \emph{twisted Demazure product} $\ast_\eta$ associated to the twisted orders $\leq_\eta$. Since the machinery developed in Section \ref{Demprodsection}, and in particular in Definition \ref{Demproddef} and Proposition \ref{Demprodmax}, only relied on our choice of order and the related length function satisfying the diamond lemma, the results therein are readily available for the twisted orders $\leq_\eta$ via Proposition \ref{twisteddiamond}. For example, we have the following proposition.

\begin{proposition} \label{twistedproductdef} 
Let $w, v \in W$. Then the set $\{ [e,w]v\}$ has a unique maximal element with respect to the twisted order $\leq_\eta$, and we can write 
$$
w \ast_\eta v:=\op{max}_{\leq_\eta}\{[e,w]v\}.
$$
\end{proposition}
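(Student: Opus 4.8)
The plan is to show that the entire construction of Section \ref{Demprodsection} transports verbatim to the twisted order $\leq_\eta$, since that construction relied on only two properties of the ambient order: the diamond lemma (Lemma \ref{diamond}) and the fact that left multiplication by a simple reflection changes the length by exactly $\pm 1$ (Lemma \ref{lengthpm1}). The first is supplied for $\leq_\eta$ by Proposition \ref{twisteddiamond}, so the only genuinely new input to check is the analogue of Lemma \ref{lengthpm1}, namely that $l_\eta(sw)=l_\eta(w)\pm 1$ for every $s\in S$ and every $w\in W$.

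To verify this I would argue directly from the formula $l_\eta(w)=l(w)-2|\Phi_{w^{-1}}\cap A'_\eta|$. Writing $s=s_i$ with simple root $\alpha_i$, the inversion set $\Phi_{(s_iw)^{-1}}$ differs from $\Phi_{w^{-1}}$ by exactly one root: if $w^{-1}\alpha_i\succ 0$ then $l(s_iw)=l(w)+1$ and $\Phi_{(s_iw)^{-1}}=\Phi_{w^{-1}}\sqcup\{w^{-1}\alpha_i\}$, whereas if instead $-w^{-1}\alpha_i\succ 0$ then $l(s_iw)=l(w)-1$ and $\Phi_{(s_iw)^{-1}}=\Phi_{w^{-1}}\setminus\{-w^{-1}\alpha_i\}$. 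In either case, whether the single added or removed root lies in $A'_\eta$ adjusts the term $-2|\Phi_{w^{-1}}\cap A'_\eta|$ by $0$ or $\mp 2$; combining this with the change of $\pm 1$ in $l(\cdot)$ yields $l_\eta(s_iw)-l_\eta(w)\in\{+1,-1\}$ across all four sign subcases. In particular $\{w,sw\}$ is always $\leq_\eta$-comparable with adjacent twisted lengths, exactly as in Lemma \ref{lengthpm1}.

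With the analogues of both Lemma \ref{lengthpm1} and Lemma \ref{diamond} in hand, I would define $s\ast_\eta v:=\op{max}_{\leq_\eta}\{v,sv\}$ as in Definition \ref{simpleDemprod}, and $w\ast_\eta v$ by iterating along a reduced word for $w$ as in Definition \ref{Demproddef}. Then I would rerun the proof of Proposition \ref{Demprodmax} word for word, replacing every subscript $\diamond$ by $\eta$ and invoking Proposition \ref{twisteddiamond} at each application of the diamond lemma. The induction on $l(w)$ there shows simultaneously that $w\ast_\eta v$ is independent of the chosen reduced word and that it equals $\op{max}_{\leq_\eta}\{[e,w]v\}$ (note $[e,w]v$ is finite, since $[e,w]$ is a standard Bruhat interval), which is precisely the assertion; the uniqueness of the maximal element is built into the conclusion that this maximum exists.

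The main obstacle is really just the bookkeeping in establishing the $\pm 1$ length change: one must track the single root by which the inversion set changes and its membership in $A'_\eta$ across the four sign cases. Everything downstream is a mechanical transcription of the already-completed argument for $\leq_\diamond$, so no new idea beyond Proposition \ref{twisteddiamond} and this length computation is required.
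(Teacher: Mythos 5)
Your proposal is correct and takes essentially the same route as the paper, which simply observes that the machinery of Section \ref{Demprodsection} (Definition \ref{Demproddef} and the proof of Proposition \ref{Demprodmax}) transports to $\leq_\eta$ once the diamond lemma is supplied by Proposition \ref{twisteddiamond}. Your explicit verification that $l_\eta(sw)=l_\eta(w)\pm 1$ from the inversion-set formula is a detail the paper leaves implicit, and your case analysis there is accurate.
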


\begin{remark} The existence of this maximum was also given in \cite{CD}*{Lemma 3.5}, again by exploiting the diamond lemma for $\leq_\eta$ (and more general twisted orders). However, the description and connection to Demazure products was not explored in loc. cit. 
\end{remark}

We can now state and prove the first major result of this section, which gives a ``first approximation" to the intersections of interest as in Task \ref{task-farce}. We adopt uniformly the notational conventions of Definition \ref{etagroupsnotation}.

\begin{theorem} \label{face-intersection-1}
Let $\eta$ be an appropriately dominant cocharacter, and consider the associated twisted order $\leq_\eta$ and twisted Demazure product $\ast_\eta$. Let $W(\eta)$ and $W^{(\eta)}$ be the associated reflection group and coset representatives, respectively, with maps $\pi_{(\eta)}: W \to W(\eta)$ and $\pi^{(\eta)}: W \to W^{(\eta)}$. 

Fix $w \in W$ arbitrary, and fix a coset representative $v \in W^{(\eta)}$. Then as intervals in the Coxeter group $W(\eta)$, we have 
$$
W(\eta) \cap \left(\pi^{(\eta)}(w \ast_\eta v) \right)^{-1} [e, w]v = [e, \pi_{(\eta)}(w \ast_\eta v)].
$$

\end{theorem}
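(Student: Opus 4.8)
The plan is to translate the set-theoretic intersection into a single membership condition, then prove the two inclusions, handling the reverse one by a downward-closure argument that crucially exploits the choice $v\in W^{(\eta)}$. Write $m:=w\ast_\eta v$, $p:=\pi^{(\eta)}(m)$, and $r:=\pi_{(\eta)}(m)$, so $m=pr$ with $p\in W^{(\eta)}$, $r\in W(\eta)$. An element $g\in W(\eta)$ lies in $L:=W(\eta)\cap\bigl(\pi^{(\eta)}(w\ast_\eta v)\bigr)^{-1}[e,w]v=W(\eta)\cap p^{-1}[e,w]v$ exactly when $pg=xv$ for some $x\le w$, so
$$
L=\{\,g\in W(\eta):pgv^{-1}\le w\,\},
$$
the order being the standard one on $W$. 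By Proposition \ref{twistedproductdef} we have $m\in[e,w]v$, hence $mv^{-1}\le w$; as $prv^{-1}=mv^{-1}$ this shows $r\in L$. First I would show $L=[e,r]$ in $W(\eta)$ by proving that (i) $r$ is the $\le_{W(\eta)}$-maximum of $L$, and (ii) $L$ is downward closed in $(W(\eta),\le_{W(\eta)})$.

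Next I would record a coset-comparison lemma: for $g,g'\in W(\eta)$ one has $pg\le_\eta pg'$ if and only if $g\le_{W(\eta)}g'$. The forward direction rests on the observation that a saturated $\le_\eta$-chain joining two elements of the single coset $pW(\eta)$ cannot leave that coset: applying the order-preserving map $\pi^{(\eta)}$ of Proposition \ref{ineqdescent} along the chain and invoking antisymmetry forces every term to project to $p$, and the resulting chain then projects, via Proposition \ref{lengthidentification} (which gives $l_\eta(ph)=l_\diamond(p)+l_{W(\eta)}(h)$), to a saturated chain in $W(\eta)$. The reverse direction lifts a saturated $W(\eta)$-chain: each cover $h\lessdot s_\gamma h$ with $\gamma\in\Phi^+_\eta$ gives $p(s_\gamma h)=s_{p\gamma}(ph)$ with $p\gamma\in\Phi^+$ (since $p\in W^{(\eta)}$), and the same length identity shows this is a $\le_\eta$-cover. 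Granting this, (i) is immediate: if $g\in L$ then $pg=xv\in[e,w]v$, so $pg\le_\eta m=pr$ by the maximality in Proposition \ref{twistedproductdef}, whence $g\le_{W(\eta)}r$.

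For the downward closure (ii), which together with $r\in L$ yields $[e,r]\subseteq L$, I would verify that a single lower cover is preserved. Suppose $g^+\in L$ and $g\lessdot_{W(\eta)}g^+$, writing $g=s_\gamma g^+$ with $\gamma\in\Phi^+_\eta$ and $(g^+)^{-1}\gamma\in\Phi^-_\eta$. Put $a:=pg^+v^{-1}\le w$. Using $s_{p\gamma}p=ps_\gamma$ one gets $pgv^{-1}=s_{p\gamma}a$, and this is length-decreasing for the standard order because
$$
a^{-1}(p\gamma)=v(g^+)^{-1}\gamma;
$$
since $(g^+)^{-1}\gamma\in\Phi^-_\eta$, its negative lies in $\Phi^+_\eta$, so the defining property $v(\Phi^+_\eta)\subseteq\Phi^+$ of $v\in W^{(\eta)}$ (Definition \ref{etagroupsnotation}, Corollary \ref{ineqcosetrep}) forces $a^{-1}(p\gamma)\in\Phi^-$. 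Hence $s_{p\gamma}a<a\le w$, so $pgv^{-1}=s_{p\gamma}a\le w$ and $g\in L$. Iterating down arbitrary chains gives downward closure, and combining (i) and (ii) yields $L=[e,r]$.

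The two places demanding care are the coset-comparison lemma—specifically the fact that $\le_\eta$-chains between elements of a common coset stay within it, for which Proposition \ref{ineqdescent} and the length identification of Proposition \ref{lengthidentification} are exactly the right tools—and the sign computation in (ii), where the hypothesis $v\in W^{(\eta)}$ (rather than an arbitrary coset representative) is precisely what guarantees the relevant reflection is length-decreasing in the standard order on $W$. I expect this last point to be the main conceptual obstacle, since it is where the naive analogue \eqref{finiteinterval} breaks down and where the passage to $\ast_\eta$ and $\pi_{(\eta)}$ earns its keep. A pleasant feature is that the argument appeals only to the diamond-lemma-based formalism shared by all the orders $\le_\eta$, so the affine dominant, antidominant, semi-infinite regular, and genuinely twisted non-regular cases are all handled uniformly.
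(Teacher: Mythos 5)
Your proof is correct, and its overall architecture coincides with the paper's: both establish the two containments separately, and your downward-closure step (ii) is, letter for letter, the paper's proof that $[e,\pi_{(\eta)}(w\ast_\eta v)]\subseteq\Theta$ --- the same conjugation $ps_\gamma=s_{p\gamma}p$, the same sign computation $a^{-1}(p\gamma)=v(g^+)^{-1}\gamma\in\Phi^-$ hinging on $v\in W^{(\eta)}$. The one genuine divergence is in the harder containment $L\subseteq[e,r]$. After both arguments use Proposition \ref{ineqdescent} to confine a saturated $\le_\eta$-chain from $pg$ up to $w\ast_\eta v$ to the single coset $pW(\eta)$, the paper converts each $\le_\eta$-step into a descent in the standard order of $W(\eta)$ by invoking the symmetric-difference characterization of Proposition \ref{altorder} and ruling out the twisted alternative $\gamma_{i+1}\in s_{\gamma_i}\cdots s_{\gamma_1}(w\ast_\eta v)(A'_\eta)$ by pairing with $\eta$; you instead read the conversion off the length additivity $l_\eta(ph)=l_\diamond(p)+l_{W(\eta)}(h)$ of Proposition \ref{lengthidentification}, so that each cover raises $l_{W(\eta)}$ by exactly one and the standard fact that reflection-comparability in a Coxeter group is governed by length does the rest. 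Your route is slightly more economical (Proposition \ref{altorder} is not needed at all), at the cost of leaning a bit harder on Dyer's theory of reflection subgroups --- specifically that $h'h^{-1}=s_{p^{-1}\gamma}\in W(\eta)$ is a reflection \emph{of} the Coxeter system $W(\eta)$, the same input the paper draws from \cite{Dyer2}*{Theorem 1.4} in the other containment --- whereas the paper's computation makes explicit how the twisted set $A'_\eta$ is excluded. Both uses of Proposition \ref{twistedproductdef} (membership $m\in[e,w]v$ for $r\in L$, and maximality for $pg\le_\eta m$) are exactly as in the paper.
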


\begin{proof}
Let $\Theta:=W(\eta) \cap \left(\pi^{(\eta)}(w \ast_\eta v) \right)^{-1} [e, w]v$, for convenience. We show the two containments $[e, \pi_{(\eta)}(w \ast_\eta v)] \subseteq \Theta$ and $\Theta \subseteq [e, \pi_{(\eta)}(w \ast_\eta v)]$ in turn. 

For the first containment, we induct on $l_{W(\eta)}(\pi_{(\eta)}(w \ast_\eta v)) - l_{W(\eta)}(y)$ for $y \in [e, \pi_{(\eta)}(w \ast_\eta v)]$. The base case corresponds to $y=\pi_{(\eta)}(w \ast_\eta v)$. This is of course an element of $W(\eta)$. By Proposition \ref{twistedproductdef}, write $w \ast_\eta v = qv$ for some unique $q \in [e,w]$. Then by unique factorization 
$$
\pi_{(\eta)}(w \ast_\eta v) = \left( \pi^{(\eta)}(w \ast_\eta v) \right)^{-1} qv,
$$
so in total we get $\pi_{(\eta)}(w \ast_\eta v) \in \Theta$. 

Now suppose that $s_\gamma y <_{W(\eta)} y$ in the $W(\eta)$ Bruhat order; note then that necessarily $\gamma \in \Phi^+_\eta$, where $\Phi^+_\eta$ is as in Definition \ref{etagroupsnotation}. Since $W(\eta) \subset W$ is a reflection subgroup, by \cite{Dyer2}*{Theorem 1.4} we in fact also get
$$
s_\gamma y < y
$$
in the inherited Bruhat order from $W$. This implies that $y^{-1}(\gamma) \in \Phi^-$; more specifically, as $\gamma \in \Phi^+_\eta$ and $y \in W(\eta)$, we have that $y^{-1}(\gamma) \in - \Phi^+_\eta$. By induction, write $y= \left( \pi^{(\eta)}(w \ast_\eta v) \right)^{-1} r v$ for some $r \in [e,w]$. This gives 
$$
\begin{aligned}
y^{-1}(\gamma) &= v^{-1} r^{-1} \pi^{(\eta)}(w \ast_\eta v) (\gamma) \\
\implies v y^{-1}(\gamma) &= r^{-1} \left( \pi^{(\eta)}(w \ast_\eta v) (\gamma) \right).
\end{aligned}
$$
Since $y^{-1}(\gamma) \in -\Phi^+_\eta$ and by assumption $v \in W^{(\eta)}$, we have $vy^{-1}(\gamma) \in \Phi^-$. On the other hand, since $\gamma \in \Phi^+_\eta$ and $\pi^{(\eta)}(w \ast_\eta v) \in W^{(\eta)}$, we have $\beta:=\pi^{(\eta)}(w \ast_\eta v) (\gamma) \in \Phi^+$. Then $r^{-1}(\beta) \in \Phi^-$ which implies that, in usual Bruhat order,
$$
s_\beta r < r;
$$
thus $s_\beta r \in [e,w]$. Finally, we write 
$$
\begin{aligned}
s_\beta r &= \pi^{(\eta)}(w \ast_\eta v) s_\gamma \left( \pi^{(\eta)}(w \ast_\eta v) \right)^{-1} r \\
&= \pi^{(\eta)}(w \ast_\eta v) s_\gamma y v^{-1} \\
\implies s_\gamma y &= \left(\pi^{(\eta)}(w \ast_\eta v) \right)^{-1} (s_\beta r) v \in \Theta,
\end{aligned}
$$
finishing the proof of $[e, \pi_{(\eta)}(w \ast_\eta v)] \subseteq \Theta$.

For the second containment, let $y \in \Theta$, and write $y:=\left(\pi^{(\eta)}(w \ast_\eta v)\right)^{-1} r v$ for some $r \in [e,w]$. Then by definition, 
$$
\pi^{(\eta)}(w \ast_\eta v) y = rv \leq_\eta w \ast_\eta v.
$$
Then fix $\gamma_1, \gamma_2, \dots, \gamma_k \in \Phi^+_{re}$ such that 
$$
\pi^{(\eta)}(w \ast_\eta v) y = s_{\gamma_k}\cdots s_{\gamma_1}(w \ast_\eta v) <_\eta s_{\gamma_{k-1}}\cdots s_{\gamma_1} (w \ast_\eta v) <_\eta \cdots <_\eta s_{\gamma_1} (w \ast_\eta v) <_\eta w \ast_\eta v.
$$
We rewrite this as 
$$
\pi^{(\eta)}(w \ast_\eta v) y=\pi^{(\eta)}(w \ast_\eta v) s_{\beta_k}\cdots s_{\beta_1} \pi_{(\eta)}(w \ast_\eta v) <_\eta \cdots <_\eta \pi^{(\eta)}(w \ast_\eta v) s_{\beta_1} \pi_{(\eta)}(w \ast_\eta v) <_\eta w \ast_\eta v,
$$
where $\beta_i:=\left(\pi^{(\eta)}(w \ast_\eta v)\right)^{-1} (\gamma_i)$; we claim that each $\beta_i$ is a positive root in $\Phi^+_\eta$. Indeed, by Proposition \ref{ineqdescent} since $\pi^{(\eta)}(w \ast_\eta v) y$ and $w \ast_\eta v$ have the same coset representative in $W^{(\eta)}$, every element in this chain also has coset representative $\pi^{(\eta)}(w \ast_\eta v)$. This forces each $s_{\beta_i} \in W(\eta)$ so that $\beta_i \in \Phi_\eta$, but since $\gamma_i \in \Phi^+_{re}$ by assumption and $\beta_i=\left(\pi^{(\eta)}(w \ast_\eta v)\right)^{-1}(\gamma_i)$, indeed we get $\beta_i \in \Phi^+_\eta$. 

For all $i$, by Proposition \ref{altorder} we have 
$$
s_{\gamma_{i+1}}\cdots s_{\gamma_1} (w \ast_\eta v) <_\eta s_{\gamma_i} \cdots s_{\gamma_1} (w \ast_\eta v) \iff \gamma_{i+1} \in \Phi_{s_{\gamma_i} \cdots s_{\gamma_1} (w \ast_\eta v)} \oplus s_{\gamma_i} \cdots s_{\gamma_1} (w \ast_\eta v) \left(A'_\eta \right),
$$
where again $\oplus$ is the symmetric difference. Suppose that $\gamma_{i+1} \in s_{\gamma_i} \cdots s_{\gamma_1} (w \ast_\eta v) \left(A'_\eta \right)$. Write, for some $\alpha \in A'_\eta$,

\begin{equation*}
\begin{gathered}
\gamma_{i+1} = s_{\gamma_i} \cdots s_{\gamma_1} (w \ast_\eta v) (\alpha) \\
\implies \left(\pi^{(\eta)}(w \ast_\eta v)\right)^{-1}s_{\gamma_1} \cdots s_{\gamma_i} (\gamma_{i+1}) = \pi_{(\eta)}(w \ast_\eta v) (\alpha) \\
\implies s_{\beta_1} \cdots s_{\beta_i} (\beta_{i+1}) = \pi_{(\eta)}(w \ast_\eta v) (\alpha)
\end{gathered}
\end{equation*}

But now, pairing both sides with $\eta$ produces a contradiction: since each $\beta_j \in \Phi^+_\eta$, we know that $s_{\beta_j} (\eta)=\eta$ and $\langle \beta_j, \eta \rangle =0$. So, the left hand side evaluates to zero. But the right hand side evaluates as 
$$
\langle \pi_{(\eta)}(w \ast_\eta v) (\alpha), \eta \rangle = \langle \alpha, \eta \rangle <0
$$
since by assumption $\alpha \in A'_\eta$. Therefore by the symmetric difference, we conclude 
$$
\gamma_{i+1} \in \Phi_{s_{\gamma_i} \cdots s_{\gamma_1} (w \ast_\eta v)}.
$$
Hence, we have for all $i$ that

\begin{equation*}
\begin{gathered}
(w \ast_\eta v)^{-1} s_{\gamma_1} \cdots s_{\gamma_i} (\gamma_{i+1}) \in \Phi^- \\
\implies \left(\pi_{(\eta)}(w \ast_\eta v)\right)^{-1} s_{\beta_1} \cdots s_{\beta_i} (\beta_{i+1}) \in \Phi^- \\
\implies s_{\beta_{i+1}} \cdots s_{\beta_1} \pi_{(\eta)} <_{W(\eta)} s_{\beta_i} \cdots s_{\beta_1} \pi_{(\eta)}(w \ast_\eta v) \text{ as a relation in $W(\eta)$.}
\end{gathered}
\end{equation*}

\noindent In particular, this gives 
$$
y = s_{\beta_k} \cdots s_{\beta_1} \pi_{(\eta)}(w \ast_\eta v) <_{W(\eta)} \cdots <_{W(\eta)} s_{\beta_1} \pi_{(\eta)}(w \ast_\eta v) <_{W(\eta)} \pi_{(\eta)}(w \ast_\eta v),
$$
so that $y \in [e, \pi_{(\eta)}(w \ast_\eta v)]$, as desired.
\end{proof}

\begin{remark} Purely combinatorially, Theorem \ref{face-intersection-1} gives a description of the elements in a translated interval with fixed coset representative as an interval in a reflection (or often, parabolic) subgroup. While our context is for a particular coset representative, Oh and Richmond \cite{OR} obtain similar results for non-translated intervals with arbitrary coset representatives. The methods therein also rely on (classical) Demazure product arguments; it would be interesting to see how to adapt the twisted Demazure product approach more generally. As a particular common corollary to Theorem \ref{face-intersection-1} and \cite{OR}, we recall the following earlier result of van den Hombergh \cite{vdH} and Billey, Fan, and Losonczy \cite{BFL}.
\end{remark}

\begin{corollary} \label{classicintersection}
Let $w \in W$, and fix a parabolic subgroup $W_J \subset W$. Then for any reduced expression $w=s_{i_1}\cdots s_{i_k}$, we have 
$$
W_J \cap [e,w] = [e, s_{J}],
$$
where we define $s_J := s_{i_{j_1}} \ast s_{i_{j_2}} \ast \cdots \ast s_{i_{j_r}}$, where $(s_{i_{j_1}}, \dots, s_{i_{j_r}})$ is the maximal subsequence of simple reflections in $W_P$ in the reduced expression for $w$. 
\end{corollary}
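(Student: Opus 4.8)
The plan is to reduce to the standard Bruhat order and then argue combinatorially. Choose an affine dominant cocharacter $\eta$ whose stabilizer is exactly $W_J$; for instance $\eta=\sum_{i\notin J}\check\Lambda_i$ works, since $\langle\alpha_i,\eta\rangle=0$ precisely for $i\in J$, so that Proposition \ref{TitsStab} gives $W_\eta=W_J$. For such $\eta$ one has $\leq_\eta=\leq$, $\ast_\eta=\ast$, $W(\eta)=W_J$, and $W^{(\eta)}=W^J$ in the notation of Definition \ref{etagroupsnotation}. (When $J=\{0,1,\dots,n\}$ the statement is trivial, as then $W_J=W$ and $s_J=w$; for any proper $J$ the parabolic $W_J$ is finite, being generated by a proper subdiagram of the affine Dynkin diagram.) Theorem \ref{face-intersection-1} describes, for a translated interval, the slice lying in the coset of its Demazure-product maximum, whereas the Corollary asks for the slice of the \emph{untranslated} interval $[e,w]$ in the identity coset $W_J$. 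I would therefore either invoke the arbitrary-coset result of Oh--Richmond quoted in the preceding remark, or---and this is the route I would actually carry out---give a direct induction on $\ell(w)$ using the Demazure-product machinery of Section \ref{Orders and Demazure products}.

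For the induction, first record the easy inclusion. Using the standard identity $w\ast v=\max_{\le}\{xy:x\le w,\ y\le v\}$ together with Proposition \ref{Demprodmax}, the Demazure product $s_J$ of the $J$-subword $(s_{i_{j_1}},\dots,s_{i_{j_r}})$ is realized by an actual subword of $w$; hence $s_J\le w$ and $s_J\in W_J$, and since $[e,s_J]\subseteq W_J$ by the subword property this yields $[e,s_J]\subseteq W_J\cap[e,w]$. For the reverse inclusion write $w=w's$ with $s=s_{i_k}$ and $\ell(w')=\ell(w)-1$, and combine the lifting decomposition $z\le w's\iff z\le w'\text{ or }zs\le w'$ with the inductive hypothesis $W_J\cap[e,w']=[e,s_J(w')]$, where $s_J(w')$ is the analogous product for $w'$. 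Fix $y\in W_J$ with $y\le w$.

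The two cases are where the work lies, and constitute the main obstacle. If $i_k\notin J$, then $s_J(w)=s_J(w')$ and I must rule out $y\not\le w'$; but $y\not\le w'$ would force $ys\le w'$ with $ys<y$, i.e.\ a right descent of $y\in W_J$ at $s_{i_k}$, contradicting the standard fact that $y(\alpha_{i_k})\succ 0$ for $y\in W_J$ and $i_k\notin J$. Hence $y\le w'$ and $y\le s_J(w')=s_J(w)$. If $i_k\in J$, then $s=s_{i_k}\in W_J$ and $s_J(w)=s_J(w')\ast s_{i_k}$; here either $y\le w'$, giving $y\le s_J(w')\le s_J(w)$, or $ys\le w'$ with $ys\in W_J$, so that $ys\le s_J(w')$ and then, using $y\le (ys)\ast s_{i_k}$ (immediate from Definition \ref{simpleDemprod}, as $(ys)\ast s_{i_k}=\max\{ys,y\}$) together with monotonicity $a\le b\Rightarrow a\ast s_{i_k}\le b\ast s_{i_k}$ (a consequence of the diamond lemma, Lemma \ref{diamond}), one gets $y\le s_J(w')\ast s_{i_k}=s_J(w)$. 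In both cases $y\in[e,s_J(w)]$, completing the induction, and independence of the reduced word is automatic since $s_J$ is defined through the well-defined product $\ast$ (Proposition \ref{Demprodmax}). The points demanding care are precisely the monotonicity of $\ast$ in its first argument and the descent fact $y(\alpha_{i_k})\succ 0$; both are standard, but must be invoked exactly to keep the two cases clean.
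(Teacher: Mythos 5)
Your argument is correct, but it takes a genuinely different route from the paper's. The paper obtains Corollary \ref{classicintersection} as an immediate specialization of Theorem \ref{face-intersection-1}: it takes $\eta$ affine \emph{antidominant} with vanishing set $J$ and $v=e$, for which $W(\eta)=W_J$ and the twisted Demazure product satisfies $w\ast_\eta e=\pi_{(\eta)}(w\ast_\eta e)=s_J$ (indeed, by Proposition \ref{lengthidentification} one has, for $u\in W_J$, $l_\eta(s_iu)=l(u)-1<l_\eta(u)$ when $i\notin J$ and $l_\eta(s_iu)=l(s_iu)$ when $i\in J$, so the recursion defining $\ast_\eta$ discards the non-$J$ letters of the reduced word and performs the ordinary Demazure product on the $J$-subword). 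With this choice the coset of the twisted Demazure-product maximum is the identity coset, so the translated interval in Theorem \ref{face-intersection-1} is exactly $[e,w]$ and the theorem literally reads $W_J\cap[e,w]=[e,s_J]$. Your stated reason for abandoning the theorem-based route --- that it describes the slice in the coset of $w\ast_\eta v$ rather than the identity coset --- applies only to your choice of a \emph{dominant} $\eta$, for which one instead recovers the translated-interval identity (\ref{finiteinterval}); the antidominant choice removes the mismatch. Your replacement argument, a direct induction on $l(w)$ along a reduced word using the lifting decomposition $z\le w's\iff z\le w'$ or $zs\le w'$, the descent fact $y(\alpha_{i_k})\in\Phi^+$ for $y\in W_J$ and $i_k\notin J$, and the monotonicity of $\ast$ in its first argument, is complete and checks out; it is more elementary and self-contained (essentially re-proving the van den Hombergh/Billey--Fan--Losonczy result from scratch), whereas the paper's proof buys the statement for free from the twisted-order machinery and serves to illustrate how that machinery specializes. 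One small caveat: the independence of $s_J$ from the chosen reduced word is not automatic from the well-definedness of $\ast$ alone, since different reduced words have different $J$-subwords; it follows a posteriori from the identity itself, because $W_J\cap[e,w]$ does not depend on the reduced word and $s_J$ is its unique maximum.
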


\begin{proof}
This follows from Theorem \ref{face-intersection-1} by setting $\eta$ to be an affine antidominant cocharacter with vanishing $J$ and $v:=e \in W^{(\eta)}$. Then it is easy to see from the definition of $\ast_\eta$ in this case that, for $w$ as above, $w \ast_\eta v = \pi_{(\eta)}(w \ast_\eta v) = s_J$.
\end{proof}

Finally, we apply these results to complete our Task \ref{task-farce} and thereby describe combinatorially the faces of $P_\lambda^w$. The final step is to note that, as stated, Theorem \ref{face-intersection-1} does not a priori apply to these faces, as the result is for coset representatives of $w \ast_\eta v$ in place of $w \ast_\diamond v$. However, the subsequent proposition takes care of this supposed difference.

\begin{proposition} \label{same-coset}
Let $\eta$ be an appropriately dominant cocharacter, and fix $w \in W$ and $v \in W^{(\eta)}$. Let $\ast_\eta$ be the twisted Demazure product associated to $\eta$, and let $\ast_\diamond$ be the Demazure product for the associated ``regular" order for $\eta$ (as in Proposition \ref{lengthidentification}). Then in $W^{(\eta)}$ we have 
$$
\pi^{(\eta)}(w \ast_\eta v) = \pi^{(\eta)}(w \ast_\diamond v).
$$
\end{proposition}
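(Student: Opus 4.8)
The plan is to exploit that, by Propositions \ref{twistedproductdef} and \ref{Demprodmax}, both products are maxima of the \emph{same} finite set $S:=[e,w]v$: writing $x:=w\ast_\eta v=\max_{\le_\eta}S$ and $y:=w\ast_\diamond v=\max_{\le_\diamond}S$, we have $x,y\in S$. First I would record the easy half. Since $y\in S$ and $x$ is the $\le_\eta$-maximum of $S$, we get $y\le_\eta x$, and applying the order-preservation of $\pi^{(\eta)}$ from Proposition \ref{ineqdescent} yields
$$
\pi^{(\eta)}(y)\le_\eta \pi^{(\eta)}(x).
$$
Because $\le_\eta$ is graded by $l_\eta$, and $l_\eta$ restricts to $l_\diamond$ on $W^{(\eta)}$ (Proposition \ref{lengthidentification} with trivial $\pi_{(\eta)}$-part), this already forces $l_\diamond(\pi^{(\eta)}(y))\le l_\diamond(\pi^{(\eta)}(x))$, with equality of lengths forcing $\pi^{(\eta)}(y)=\pi^{(\eta)}(x)$.

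The entire statement therefore reduces to the reverse length inequality, which I would obtain from the single key input $(\dagger)$: \emph{the coset-representative map $\pi^{(\eta)}$ is $\le_\diamond$-order-preserving.} Granting $(\dagger)$, since $x\in S$ and $y$ is the $\le_\diamond$-maximum of $S$ we have $x\le_\diamond y$, hence $\pi^{(\eta)}(x)\le_\diamond\pi^{(\eta)}(y)$ and thus $l_\diamond(\pi^{(\eta)}(x))\le l_\diamond(\pi^{(\eta)}(y))$. Combining with the previous paragraph gives $l_\diamond(\pi^{(\eta)}(x))=l_\diamond(\pi^{(\eta)}(y))$, i.e. equal $l_\eta$-lengths; then $\pi^{(\eta)}(y)\le_\eta\pi^{(\eta)}(x)$ together with gradedness of $\le_\eta$ forces $\pi^{(\eta)}(x)=\pi^{(\eta)}(y)$, as desired.

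It remains to establish $(\dagger)$. When $\eta$ is affine dominant or antidominant, $W^{(\eta)}=W^J$ is an ordinary parabolic quotient, $\pi^{(\eta)}$ is the minimal-length coset projection, and $(\dagger)$ is the classical order-preservation of $\pi^J$ for the standard Bruhat order (the antidominant case being exactly this statement read in the reverse order). The substantive case is $\eta$ finite dominant, where $\le_\diamond=\le_{\frac{\infty}{2}}$ and $W(\eta)=(W_J)_{\mathrm{af}}$ is the non-standard reflection subgroup carrying Peterson's representatives. Here I would compare $\le_\eta$ and $\le_\diamond$ through Dyer's reflection characterization (Proposition \ref{altorder}): a direct computation of the relevant root sets gives $A'_\diamond=A'_\eta\sqcup D$ with $D:=A'_\diamond\setminus A'_\eta\subseteq\Phi^+_\eta$ (concretely $D=\Omega_3$ in the partition used in the proof of Proposition \ref{lengthidentification}), while $A'_\eta\cap\Phi^+_\eta=\varnothing$. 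Feeding this into Proposition \ref{altorder}, the relations $s_\gamma z\le_\eta z$ and $s_\gamma z\le_\diamond z$ agree for every $z$ and every reflection $s_\gamma$ except exactly when $z^{-1}\gamma\in D\subseteq\Phi^+_\eta$ — precisely the reflections for which $s_\gamma z$ and $z$ lie in the same $W^{(\eta)}$-coset. Thus $\le_\eta$ and $\le_\diamond$ have identical \emph{inter-coset} covering relations; a coset-changing covering step then propagates through $\pi^{(\eta)}$ by Proposition \ref{ineqdescent}, while a step inside a coset leaves $\pi^{(\eta)}$ unchanged. This reduces $(\dagger)$ to the assertion that, on $W^{(\eta)}$, the two induced orders $\le_\eta$ and $\le_\diamond$ coincide.

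The hard part will be exactly this last point. The structural comparison shows the two orders share all covers that change cosets, but turning that into $(\dagger)$ requires knowing that $W^{(\eta)}$ is a well-behaved (graded, convex) subposet, in which an order relation between two representatives is witnessed by a chain that stays inside $W^{(\eta)}$ — only then does each step of such a chain, being inter-coset, transfer between $\le_\eta$ and $\le_\diamond$. For the semi-infinite order on Peterson's quotient $(W^J)_{\mathrm{af}}$ this is the sole nonformal ingredient; I would either extract it from Dyer's treatment of parabolic quotients of twisted Bruhat orders \cite{Dyer3,Dyer4} or cite it from the Peterson/Lam--Shimozono \cite{Pet,LS} description of $(W^J)_{\mathrm{af}}$.
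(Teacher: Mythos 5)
Your argument is essentially the paper's: both use that $w\ast_\eta v$ and $w\ast_\diamond v$ are maxima of the same set $[e,w]v$, push the two resulting comparisons through $\pi^{(\eta)}$ using order-preservation for each order, and finish by comparing lengths via Proposition \ref{lengthidentification}. The single input you flag as unresolved --- your $(\dagger)$, that $\pi^{(\eta)}$ is $\le_\diamond$-order-preserving --- is exactly what the paper invokes at the corresponding step, treating it as classical for the standard and opposite orders and citing \cite{INS}*{Lemma 6.1.1} for $\le_\semi$ on Peterson's quotient; so what remains in your write-up is a missing citation rather than a missing idea. Two remarks on your sketch of $(\dagger)$. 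First, your final reduction to ``the induced orders $\le_\eta$ and $\le_\diamond$ coincide on $W^{(\eta)}$'' is unnecessary: your own inter-coset analysis, applied to a chain witnessing $u\le_\diamond w$, already gives $u\le_\diamond w\implies \pi^{(\eta)}(u)\le_\eta \pi^{(\eta)}(w)$ (intra-coset steps leave $\pi^{(\eta)}$ fixed since $z^{-1}\gamma\in\pm\Phi^+_\eta$ forces $s_{z^{-1}\gamma}\in W(\eta)$; inter-coset steps are simultaneously $\le_\eta$-steps by your computation $A'_\diamond\setminus A'_\eta=\Omega_3\subseteq\Phi^+_\eta$ together with Proposition \ref{altorder}, and then Proposition \ref{ineqdescent} applies). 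Applying this directly to $x\le_\diamond y$ and combining with $\pi^{(\eta)}(y)\le_\eta\pi^{(\eta)}(x)$ from the easy half, antisymmetry of $\le_\eta$ finishes the proposition with no further input --- arguably a cleaner route than either your version or the paper's. Second, the coincidence of the two restricted orders on $W^{(\eta)}$ that you identify as the ``hard part'' is true but is recorded in the paper only as an unproved remark after the proof; you should not let the proof of the proposition rest on it.
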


\begin{proof}
Write $w \ast_\eta v = qv$ and $w \ast_\diamond v = q'v$ for elements $q, q' \in [e,w]$. Since the Demazure products are the maximum elements in $[e,w]v$ for their respective orderings, we have that 
$$
\begin{aligned}
w \ast_\diamond v &\leq_\eta w \ast_\eta v, \\
w \ast_\eta v &\leq_\diamond w \ast_\diamond v.
\end{aligned}
$$

By Proposition \ref{ineqdescent}, we have $\pi^{(\eta)}(w \ast_\diamond v) \leq_\eta \pi^{(\eta)} (w \ast_\eta v)$, so that $l_\eta(\pi^{(\eta)}(w \ast_\diamond v)) \leq l_\eta(\pi^{(\eta)} (w \ast_\eta v))$. By Proposition \ref{lengthidentification}, this in fact gives 
$$
l_\diamond(\pi^{(\eta)}(w \ast_\diamond v)) \leq l_\diamond(\pi^{(\eta)} (w \ast_\eta v)).
$$

Likewise, we can consider the restriction of the ordering $\leq_\diamond$ to $W^{(\eta)}$; this restriction is order-preserving on coset representatives. (When $\leq_\diamond$ is the usual or opposite Bruhat order this is classical; see \cite{INS}*{Lemma 6.1.1} for the corresponding result for $\leq_\semi$.) In particular, we have 
$$
\pi^{(\eta)}(w \ast_\eta v) \leq_\diamond \pi^{(\eta)}(w \ast_\diamond v),
$$
so that $l_\diamond( \pi^{(\eta)}(w \ast_\eta v)) \leq l_\diamond(\pi^{(\eta)}(w \ast_\diamond v))$. Taken together, we get again by Proposition \ref{ineqdescent} that
$$
l_\eta ( \pi^{(\eta)}(w \ast_\eta v)) = l_\eta(\pi^{(\eta)}(w \ast_\diamond v)).
$$
But since $\pi^{(\eta)}(w \ast_\diamond v) \leq_\eta \pi^{(\eta)}(w \ast_\eta v)$, this forces
$$
\pi^{(\eta)}(w \ast_\eta v) = \pi^{(\eta)}(w \ast_\diamond v).
$$
\end{proof}

\begin{remark} While we will not make explicit use of it here, it is not hard to see the related fact that the restrictions of $\leq_\eta$ and $\leq_\diamond$ to $W^{(\eta)}$ coincide. 
\end{remark}

Combining Theorem \ref{face-intersection-1} and Proposition \ref{same-coset}, we arrive at the following result which finishes Task \ref{task-farce} and gives a description of faces of $P_\lambda^w$. 

\begin{theorem} \label{face-intersection-2}
Fix $\lambda$ a dominant integral weight and $w \in W$. Let $\eta$ be an appropriately dominant cocharacter and fix $v \in W^{(\eta)}$. Retaining previous notation, the collection of vertices $\{q \lambda | q \in [e,w]\}$ lying on the face $\mathcal{F}(v, \eta)$ of $P_\lambda^w$ corresponds to the interval  
$$
\Omega:=W(\eta) \cap \left(\pi^{(\eta)}(w^{-1} \ast_\diamond v) \right)^{-1} [e, w^{-1}]v = [e, \pi_{(\eta)}(w^{-1} \ast_\eta v)]
$$
in $W(\eta)$, via
$$
y \in \Omega \mapsto \underbrace{ v y^{-1} \left(\pi^{(\eta)}(w^{-1}*_\diamond v)\right)^{-1}}_{q} \lambda.
$$
\end{theorem}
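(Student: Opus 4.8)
The plan is to deduce this theorem by assembling the two substantive combinatorial results already established, Theorem \ref{face-intersection-1} and Proposition \ref{same-coset}, to obtain the interval equality, and then to convert that equality into a statement about vertices using the geometric characterization in Proposition \ref{faces-combinatorics}. The genuinely new content is light: it consists of careful bookkeeping with inverses (the Task is phrased in terms of $w^{-1}$ and $[e,w^{-1}]$, whereas Theorem \ref{face-intersection-1} is stated for a generic element and the interval $[e,\tilde w]$) together with the handling of $W_\lambda$-cosets.

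First I would establish the set equality $\Omega = [e,\pi_{(\eta)}(w^{-1}\ast_\eta v)]$. Since Theorem \ref{face-intersection-1} applies to an arbitrary element of $W$, I apply it with $w^{-1}$ in place of $w$ and the same $v\in W^{(\eta)}$ to obtain
\[
W(\eta)\cap\left(\pi^{(\eta)}(w^{-1}\ast_\eta v)\right)^{-1}[e,w^{-1}]v = [e,\pi_{(\eta)}(w^{-1}\ast_\eta v)].
\]
Proposition \ref{same-coset}, again applied to $w^{-1}$, gives $\pi^{(\eta)}(w^{-1}\ast_\eta v)=\pi^{(\eta)}(w^{-1}\ast_\diamond v)$, so I may substitute the $\ast_\diamond$-representative on the left-hand side without changing the set. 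This is precisely the asserted identity defining $\Omega$, finishing the purely combinatorial half and completing Task \ref{task-farce}.

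Next I would connect $\Omega$ to the vertices. By Proposition \ref{faces-combinatorics}, a vertex $q\lambda$ (with $q\le w$, using Proposition \ref{convhull}) lies on $\mathcal{F}(v,\eta)$ exactly when some representative $\bar q\in qW_\lambda$ satisfies $\bar q\le w$ and $(\bar q)^{-1}\in (w^{-1}\ast_\diamond v)W(\eta)v^{-1}$. Because $\pi_{(\eta)}(w^{-1}\ast_\diamond v)\in W(\eta)$ is absorbed on the right, this double coset equals $\pi^{(\eta)}(w^{-1}\ast_\diamond v)W(\eta)v^{-1}$, so the membership condition reads $y:=\pi^{(\eta)}(w^{-1}\ast_\diamond v)^{-1}(\bar q)^{-1}v\in W(\eta)$. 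Combined with the equivalence $\bar q\le w\iff (\bar q)^{-1}\in[e,w^{-1}]$, this is exactly the statement $y\in\Omega$, and solving for $\bar q$ recovers the map in the theorem, $\bar q = vy^{-1}\left(\pi^{(\eta)}(w^{-1}\ast_\diamond v)\right)^{-1}$. Running these equivalences in both directions shows that $y\mapsto \bar q\lambda$ carries $\Omega$ onto the set of vertices of $P_\lambda^w$ lying on $\mathcal{F}(v,\eta)$, and that every such vertex arises this way.

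The step I expect to be the main obstacle is upgrading this to a genuine bijection \emph{onto vertices} rather than merely onto the group elements $\bar q$, since $\bar q\lambda=\bar q'\lambda$ whenever $\bar q,\bar q'$ lie in a common $W_\lambda$-coset. The map $y\mapsto (\bar q)^{-1}=\pi^{(\eta)}(w^{-1}\ast_\diamond v)\,y\,v^{-1}$ is visibly injective on group elements, so the real point is to rule out collisions after applying $\lambda$. Here I would invoke the standing convention, fixed just before Proposition \ref{faces-combinatorics}, that $w$ is the maximal-length representative of $wW_\lambda$, together with the final lemma preceding that proposition, which singles out a distinguished $\bar q\le w$ in each relevant coset; this pins down one $y\in\Omega$ per vertex. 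Verifying that these canonical representatives $\bar q$ in fact occupy distinct $W_\lambda$-cosets—equivalently, that no two distinct elements of $\Omega=[e,\pi_{(\eta)}(w^{-1}\ast_\eta v)]$ differ on the right by an element of $W(\eta)\cap\mathrm{Stab}(\pi^{(\eta)}(w^{-1}\ast_\diamond v)^{-1}\lambda)$—is the one place where honest verification, rather than formal manipulation, is required, and it is where I would focus the detailed argument.
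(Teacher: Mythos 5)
Your proposal is correct and follows essentially the same route as the paper, which derives the theorem precisely by combining Theorem \ref{face-intersection-1} (applied to $w^{-1}$) with Proposition \ref{same-coset} and then translating to vertices via Proposition \ref{faces-combinatorics} and Task \ref{task-farce}. Your closing concern about injectivity after applying $\lambda$ is reasonable extra care, but the paper treats the correspondence as a parametrization handled by the maximal-length convention on $w$ and the lemma preceding Proposition \ref{faces-combinatorics}, exactly as you indicate.
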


\begin{remark} Recall that by Proposition \ref{convhull}, we have for $w \in W$ and dominant integral weight $\lambda$ that 
$$
P_\lambda^w = \op{conv}(\{v\lambda: v \leq w\}).
$$
With this description, the affine Demazure weight polytopes are certain generalizations of the \emph{Bruhat interval polytopes}, introduced by Tsukerman and Williams \cite{TW}, to the setting of affine Weyl groups for the intervals $[e,w]$ (see also the discussion in \cite{BJK}*{Remark 7.8}). In this perspective, Theorem \ref{face-intersection-2} gives another instance of the recurring theme that ``faces of Bruhat interval polytopes are again Bruhat interval polytopes." 
\end{remark}

\begin{remark} When $G$ is a finite-dimensional complex reductive group, in \cite{BJK}*{Proposition 7.9} we make the further connection that the faces $\mathcal{F}(v, \eta)$ are themselves the Demazure weight polytopes corresponding to certain Levi subalgebras. This realization was used, crucially, to give an inductive approach to questions on saturation of Demazure characters. In the present setting, we can similarly realize the faces $\mathcal{F}(v, \eta)$ for affine Demazure weight polytopes as weight polytopes themselves, corresponding to (not necessarily Levi) subalgebras determined by $\Phi^+_\eta$. We do not give details here, for brevity.
\end{remark}

\end{document}